\renewcommand\Re{{\operatorname{Re}}}
\renewcommand\Im{{\operatorname{Im}}}
\newcommand\R{{\mathbf{R}}}
\newcommand\C{{\mathbf{C}}}
\renewcommand\P{{\mathbf{P}}}
\newcommand\E{{\mathbf{E}}}
\newcommand\tr{{\operatorname{tr}}}
\newcommand\dist{{\operatorname{dist}}}
\newcommand\F{{\mathbf{F}}}
\newcommand\col{{\mathbf{c}}}
\newcommand\eps{\varepsilon}
\newcommand\Bb{{\mathbf b}}
\newcommand\Bc{{\mathbf c}}
\newcommand\Be{{\mathbf e}}
\newcommand\Bf{{\mathbf f}}
\newcommand\Bg{{\mathbf g}}
\newcommand\Bi{{\mathbf i}}
\newcommand\Br{{\mathbf r}}
\newcommand\Bu{{\mathbf u}}
\newcommand\Bv{{\mathbf v}}
\newcommand\Bx{{\mathbf x}}
\newcommand\By{{\mathbf y}}
\newcommand\Bz{{\mathbf z}}
\newcommand\CE{{\mathcal E}}
\newcommand\CF{{\mathcal F}}
\newcommand\CG{{\mathcal G}}
\newcommand\CN{{\mathcal N}}
\newcommand\N{{\mathbf N}}
\theoremstyle{plain}
  \newtheorem{theorem}[subsection]{Theorem}
  \newtheorem{lemma}[subsection]{Lemma}
  \newtheorem{example}[subsection]{Example}
  \newtheorem{remark}[subsection]{Remark}
  \newtheorem{claim}[subsection]{Claim}
\theoremstyle{definition}
  \newtheorem{definition}[subsection]{Definition}
\begin{document}

\title[normality of vectors]{Normal vector of a random hyperplane}

\author{Hoi H. Nguyen}
\address{Department of Mathematics, The Ohio State University, Columbus OH 43210}
\address{School of Mathematics, Institute for Advanced Studies, Princeton NJ 08540}

\email{nguyen.1261@math.osu.edu}

\author{Van H. Vu}
\address{Department of Mathematics, Yale University, New Haven CT 06520}
\email{van.vu@yale.edu}

\thanks{H.~Nguyen is supported by NSF grants DMS-1358648, DMS-1128155 and CCF-1412958. V.~Vu is supported by   NSF  grant DMS-1307797  and AFORS grant FA9550-12-1-0083.}

\maketitle

\begin{abstract}
Let $\Bv_1, \dots, \Bv_{n-1}$ be $n-1$ independent vectors in $\R^n$ (or $\C^n$). We study  $\Bx$, the  unit normal vector of the hyperplane spanned by the $\Bv_i$. 
Our main finding is that $\Bx$ resembles a random vector chosen uniformly from the unit sphere, under some randomness assumption on the $\Bv_i$. 

Our result has applications in random matrix theory.  Consider  an $n \times n $ random matrix with iid entries.  We  first prove an exponential bound on the upper tail for the least singular value, 
 improving the earlier linear bound by  Rudelson and Vershynin. Next, we derive optimal delocalization for   the eigenvectors corresponding to eigenvalues of small modulus. 
 
 \end{abstract}

\section{Introduction}

A real random variable $\xi$ is {\it normalized} if it has mean 0 and variance 1. A complex random variable $\xi$ is normalized if 
 $\xi=\frac{1}{\sqrt{2}}\xi_1 + \frac{1}{\sqrt{2}}\Bi \xi_2$, where $\xi_1,\xi_2$ are iid copies of a real normalized  random variable. 

\begin{example}  Some  popular normalized variables 

\begin{itemize} 
\item real standard Gaussian $\Bg_R =\N(0,1)$, or real Bernoulli $\Bb_R$ which takes value $\pm 1$ with probability $1/2$;
\vskip .1in
\item complex standard Gaussian $\Bg_\C = \frac{1}{\sqrt{2}} \Bg_{1,\R} + \frac{1}{\sqrt{2}} \Bi \Bg_{2,\R}$, or complex Bernoulli $\Bb_\C= \frac{1}{\sqrt{2}} \Bb_{1,\R} + \frac{1}{\sqrt{2}} \Bi \Bb_{2,\R}$.
\end{itemize}
\end{example}

Fixed a normalized random variable $\xi$ and consider the  random vector $\Bv = (\xi_1, \dots, \xi_n )$, whose entries are iid copies of $\xi$.  Sample $n-1$ iid copies  $\Bv_1, \dots, \Bv_{n-1} $ of $\Bv$. We would like to study the 
normal vector of the hyperplane spanned by the $\Bv_i$.

In matrix term, we let 
 $A=(a_{ij})_{1\le i \le n-1, 1\le j\le n}$ be a random matrix of size $n-1$ by $n$ where the entries $a_{ij}$ are iid copies of  $\xi$; the $\Bv_i$ are the row vectors of $A$. 
 Let $\Bx=(x_1,\dots,x_n)\in \F^n$ be a unit vector that is orthogonal to the  $\Bv_i$ (Here and later $\F$ is either $\R$ or $\C$, depending on the support of $\xi$.) 
  First note that recent studies in the singularity probability of random non-Hermitian matrices  (see for instance \cite{BVW, RV-LO}) show that under very general conditions on $\xi$, with extremely high probability $A$ has rank $n-1$. In this case  $\Bx$ is uniquely determined  up to the sign $\pm 1$ when $\F=\R$ or by a uniformly chosen rotation $\exp(\Bi \theta)$ when $\F=\C$. Throughout the paper, we use asymptotic notation under the assumption that $n$ tends   to infinity.  In particular, $X = O(Y)$, $X \ll Y$, or $Y \gg X$  means that $|X| \leq CY$ for some fixed $C$. 

When the entries of $A$ are iid standard gaussian $\Bg_\F$, it is not hard to see that $\Bx$ is distributed as a random unit vector sampled according to the Haar measure in $S^{n-1}$ of $\F^n$.  One then deduces the following properties (see for instance \cite{OVW}[Section 2])

\begin{theorem}[Random gaussian vector]  \label{thm:gaussian} 
Let $\Bx$ be a random vector uniformly distributed on the unit sphere $S^{n-1}$.   
Then,

\begin{itemize} 

\item (joint distribution of the coordinates) $\Bx$ can be represented as  
\begin{equation} \label{P1} \Bx := (\frac{\xi_1}{S}, \dots,  \frac{\xi_n }{S}) \end{equation}  where $\xi_i$ are iid standard gaussian $\Bg_\F$, and $S =\sqrt {\sum_{i=1}^n |\xi_i|^2 }$;
\vskip .05in

\item (inner product with a fixed vector)  for any fixed vector $\Bu$ on the unit sphere,  \begin{equation} \label{P2} \sqrt n  \Bx^\ast \Bu \overset{d} {\to} \Bg_\F  ; \end{equation} 
\vskip .05in

\item (the largest coordinate) for any $C > 0$,  with  probability at least $1 - n^{- C} $
\begin{equation} \label{eq:GOEmax}
	\| \Bx \|_{\infty} \leq \sqrt {\frac{8(C+1)^3 \log n}{n} }; 
\end{equation}
\vskip .05in

\item  (the smallest coordinate) for $n \geq 2$, any $0 \leq c < 1$, and any $a>1$,  
\begin{equation} \label{eq:GOEmin}
	\| \Bx \|_{\min}= \min\{|x_1|,\dots, |x_n|\} \geq \frac{c}{a} \frac{1}{n^{3/2}}
\end{equation}
with probability at least $\exp \left( -2c \right) - \exp \left( - \frac{ a^2 - \sqrt{2 a^2 -1} }{2} n \right)$.



\end{itemize} 
\end{theorem}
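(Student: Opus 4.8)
The plan is to base everything on one structural fact: the standard Gaussian measure on $\F^n$ is invariant under the orthogonal (resp.\ unitary) group, so if $\xi_1,\dots,\xi_n$ are iid copies of $\Bg_\F$ and $S=\sqrt{\sum_{i=1}^n|\xi_i|^2}$, then $(\xi_1/S,\dots,\xi_n/S)$ lies on $S^{n-1}$ with a rotation-invariant law, hence is exactly the Haar-random $\Bx$; this proves \eqref{P1}. For \eqref{P2}, invariance lets me take $\Bu=\Be_1$, so $\sqrt n\,\Bx^\ast\Bu$ has the law of $\sqrt n\,\overline{x_1}=\overline{\xi_1}\cdot(\sqrt n/S)$. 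Coupling the dimensions through one fixed iid sequence $(\xi_i)_{i\ge 1}$, the strong law of large numbers gives $S^2/n\to\E|\xi_1|^2=1$ a.s., hence $\sqrt n/S\to 1$ a.s.\ and $\sqrt n\,\overline{x_1}\to\overline{\xi_1}\overset{d}{=}\Bg_\F$; Slutsky's theorem (or just this a.s.\ statement) then yields \eqref{P2}.

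For the largest coordinate \eqref{eq:GOEmax} I would control the numerator and denominator of $\|\Bx\|_\infty=\max_i|\xi_i|/S$ separately. A union bound over the Gaussian tail $\P(|\xi_i|>t)\le 2e^{-t^2/2}$ shows $\max_i|\xi_i|\le 2(C+1)^{3/2}\sqrt{\log n}$ off an event of probability at most $2n^{1-2(C+1)^3}\le\tfrac12 n^{-C}$ for $n$ large, while a lower-tail concentration bound for $S^2$ — a $\chi^2$ with $n$ (real) or $2n$ (complex) degrees of freedom, for instance the Laurent--Massart inequality — gives $S^2\ge n/2$ off an event of exponentially small probability $\le\tfrac12 n^{-C}$. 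On the intersection $\|\Bx\|_\infty\le 2(C+1)^{3/2}\sqrt{\log n}\cdot\sqrt{2/n}=\sqrt{8(C+1)^3\log n/n}$, and the seemingly wasteful constant $8(C+1)^3$ is exactly what is needed to fit both error terms under $n^{-C}$.

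For the smallest coordinate \eqref{eq:GOEmin} I would intersect the events $\{\min_i|\xi_i|\ge c/n\}$ and $\{S\le a\sqrt n\}$, on which $\|\Bx\|_{\min}=\min_i|\xi_i|/S\ge (c/n)/(a\sqrt n)=\tfrac{c}{a}\,n^{-3/2}$. By independence and the anti-concentration estimate $\P(|\xi_1|\le c/n)\le c/n$ (the modulus of a normalized Gaussian has density $\le 1$), one gets $\P(\min_i|\xi_i|\ge c/n)\ge (1-c/n)^n\ge e^{-2c}$, using $\log(1-u)\ge -2u$ on $[0,\tfrac12]$ together with $c/n<\tfrac12$. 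For the complementary event I would use the sharp $\chi^2$ upper tail $\P(\chi^2_m\ge m+2\sqrt{mx}+2x)\le e^{-x}$: in the real case $S^2\sim\chi^2_n$, and solving $n+2\sqrt{nx}+2x=a^2n$ gives precisely $x=\tfrac12(a^2-\sqrt{2a^2-1})\,n$, so $\P(S>a\sqrt n)\le\exp(-\tfrac12(a^2-\sqrt{2a^2-1})n)$; the complex case $S^2\sim\tfrac12\chi^2_{2n}$ yields an even smaller bound, so this estimate holds in both. A union bound then gives the claimed probability $e^{-2c}-\exp(-\tfrac12(a^2-\sqrt{2a^2-1})n)$.

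None of the steps is genuinely hard; the only subtle point is cosmetic, namely reproducing the exact exponent $\tfrac12(a^2-\sqrt{2a^2-1})$ in \eqref{eq:GOEmin}, which forces the use of the sharp Laurent--Massart $\chi^2$ tail (the naive MGF optimum $\tfrac{a^2-1}{2}-\log a$ is in fact larger but has the wrong algebraic form) and a check that the complex normalization $S^2\sim\tfrac12\chi^2_{2n}$ does not weaken it; the rest is bookkeeping of absolute constants, carried out in detail in \cite{OVW}[Section 2].
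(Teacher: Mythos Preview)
Your proof is correct and is precisely the standard argument; the paper itself does not prove Theorem~\ref{thm:gaussian} but simply defers to \cite{OVW}[Section 2], which you also cite. In particular, your computation recovering the exact exponent $\tfrac12(a^2-\sqrt{2a^2-1})$ from the Laurent--Massart $\chi^2$ tail is the intended route, and your handling of the complex normalization $S^2\sim\tfrac12\chi^2_{2n}$ is correct.
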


Motivated by the {\it universality phenomenon} (see, for instance \cite{TVsurvey}), it is natural to ask if these properties are universal, namely that they hold if $\xi$ is non-gaussian. 
Our result confirms this prediction in a  strong sense. They also have applications in the theory of random matrices, which we will discuss after stating the main result. 

Let us introduce some notations. We say that $\xi$ is sub-gaussian if there exists a parameter $K_0>1$ such that for all $t$

\begin{equation}\label{eqn:xi}
\P(|\xi|\ge t)= O(\exp(-\frac{t^2}{K_0})).
\end{equation}

\begin{definition}[Frequent events] Let $\CE$ be an event depending on $n$ (which is assumed to be sufficiently large). 
\begin{itemize}
\item $\CE$ holds asymptotically almost surely if $\P(\CE)=1-o(1)$.
\item $\CE$ holds with high probability if there exists a positive constant $\delta$ such that $\P(\CE)\ge 1 -n^{-\delta}$.  
\item $\CE$ holds with overwhelming probability, and write $P(\CE)=1-n^{-\omega(1)}$, if for any $K>0$, with sufficiently large $n$ $\P(\CE)\ge 1- n^{-K}$.
\end{itemize}
\end{definition}

\begin{theorem}[Main result]\label{thm:linear} Suppose that $a_{ij}$ are iid copies of a normalized sub-gaussian random variable $\xi$, then the followings hold.

\begin{itemize}

\item (the largest coordinate)  There are constants $C, C_1 >0$ such that for any  $m \ge C_1 \log n $ 
\begin{equation}\label{eqn:infty:1}
\P( \|\Bx\|_\infty  \ge   \sqrt {m/n}  ) \le C n^2 \exp (-m/C).
 \end{equation}  In particularly, with overwhelming probability 
 
 $$\| \Bx \| _{\infty} = O(\sqrt { \frac{  \log n}  {n } }).$$

\vskip .05in

\item  (the smallest coordinate) with high probability 
\begin{equation}\label{eqn:infty:2}
\|\Bx \|_{\min}  \ge  \frac{1 }{n^{3/2} \log^{O(1)}n }.
\end{equation}
\vskip .05in

\item (joint distribution of the coordinates) There  exists  a positive constant $c$ such that 
the following holds: for any $d$-tuple $(i_1,\dots,i_m)$, with $d=n^c$, the joint law of the tuple $(\sqrt{n}x_{i_1},\dots,\sqrt{n}x_{i_d})$ is asymptotically independent standard normal. More precisely, there exists a positive constant $c'$ such that for any measurable set $\Omega \in \F^d$,

\begin{equation}\label{eqn:normality}
|\P((\sqrt{n}x_{i_1},\dots,\sqrt{n}x_{i_d}) \in \Omega) - \P(\Bg_{\F,1},\dots,\Bg_{\F,d}) \in \Omega)| \le d^{-c'},
\end{equation}

\noindent where $\Bg_{\F,1},\dots,\Bg_{\F,d}$ are iid standard gaussian.

\item (inner product with a fixed vector) Assume furthermore that $\xi$ is symmetric, then for any fixed vector $\Bu$ on the unit sphere,  
\begin{equation} \label{eqn:innerproduct} 
\sqrt n \Bx^\ast  \Bu \overset{d} {\to} \Bg_\F . 
\end{equation} 
\vskip .05in



\end{itemize}
\end{theorem}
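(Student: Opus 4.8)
\emph{Proof plan.} Write $u_1,\dots,u_n$ for the coordinates of $\Bu$, set $T:=(\log n)^{-2}$ and $S:=\{j:\ |u_j|\ge T\}$, and decompose $\Bu=\Bu'+\Bu''$ with $\Bu'=\Bu|_S$, $\Bu''=\Bu|_{S^c}$; then $|S|\le T^{-2}=(\log n)^4$ and $\|\Bu''\|_\infty<T$. Because $\xi$ is symmetric, the law of $A$ is invariant under flipping the signs of an arbitrary fixed set of columns (in the complex case, multiplying those columns by a uniform fourth root of unity), so after fixing the overall sign (resp.\ phase) of $\Bx$ uniformly at random, the law of $\Bx$ is invariant under the corresponding operation on its coordinates. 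Hence, introducing iid signs (resp.\ fourth roots of unity) $\epsilon_j$, $j\in S^c$, independent of $\Bx$,
\[
\sqrt n\,\Bx^\ast\Bu\ \overset{d}{=}\ \sqrt n\sum_{j\in S}\bar x_j u_j\ +\ \sqrt n\sum_{j\in S^c}\epsilon_j\,\bar x_j u_j ,
\]
and we aim to show that the two terms contribute independent centered Gaussians of variances $\|\Bu'\|_2^2$ and $\|\Bu''\|_2^2$, which sum to $\|\Bu\|_2^2=1$.

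\emph{The small part, conditioned on $\Bx$.} Given $\Bx$, the second sum is a weighted sum of independent mean-zero bounded variables with conditional variance $\tau^2:=n\sum_{j\in S^c}|x_j|^2|u_j|^2$; by \eqref{eqn:infty:1} one has $\|\Bx\|_\infty^2\le C\log n/n$ with overwhelming probability, so each summand obeys $n|x_j|^2|u_j|^2\le CT^2\log n=C(\log n)^{-3}$. Feeding this into the conditional characteristic function $\prod_{j\in S^c}\cos\big(t\sqrt n\,x_ju_j\big)$ and using $|\cos\theta-e^{-\theta^2/2}|=O(\theta^4)$ for $|\theta|\le1$, the product equals $e^{-t^2\tau^2/2}(1+o(1))$ uniformly on that event (the error is $O(t^4\max_j n|x_ju_j|^2\cdot\tau^2)=o(1)$), with the obvious complex analogue.

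\emph{The key estimate: $\tau^2\to\|\Bu''\|_2^2$ in probability.} By permutation symmetry of the law of $\Bx$, $\E|x_j|^2$ is independent of $j$, so $\E|x_j|^2=\tfrac1n$ and $\E\tau^2=\|\Bu''\|_2^2$ exactly; similarly $v:=\mathrm{Var}(n|x_j|^2)$ and $\gamma:=\mathrm{Cov}(n|x_i|^2,n|x_j|^2)$ ($i\ne j$) do not depend on $i,j$, and since $\sum_i|x_i|^2\equiv1$ we get $0=\mathrm{Var}(\sum_i n|x_i|^2)=nv+n(n-1)\gamma$, i.e.\ $\gamma=-v/(n-1)$. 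Writing $w_j=|u_j|^2$, this yields
\[
\mathrm{Var}(\tau^2)=v\sum_{j\in S^c}w_j^2+\gamma\Big(\big(\textstyle\sum_{j\in S^c}w_j\big)^2-\sum_{j\in S^c}w_j^2\Big)\ \le\ \frac{n}{n-1}\,v\sum_{j\in S^c}w_j^2\ \le\ 2v\,T^2 ,
\]
using $\sum_{j\in S^c}w_j^2\le(\max_{j\in S^c}w_j)\sum_{j\in S^c}w_j\le T^2$. Finally, applying \eqref{eqn:infty:1} with a sufficiently large constant multiple of $\log n$ in place of $m$ (so as to absorb the $n^2$ prefactor) gives $v\le\E(\sqrt n|x_1|)^4\ll(\log n)^2$; hence $\mathrm{Var}(\tau^2)\ll(\log n)^{-2}\to0$, and with the exact mean we conclude $\tau^2\to\|\Bu''\|_2^2$ in $L^2$, hence in probability.

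\emph{The large part and conclusion.} Since $|S|\le(\log n)^4\le n^c$, the joint-normality estimate \eqref{eqn:normality} (padding $S$ to size $n^c$ by adjoining zero-weight indices) shows $\sqrt n\sum_{j\in S}\bar x_ju_j$ is within $o(1)$ in total variation of $\sum_{j\in S}\bar\Bg_{\F,j}u_j\overset{d}{=}\|\Bu'\|_2\,\Bg_\F$. Taking characteristic functions of the symmetrized identity above and averaging over the $\epsilon_j$ with $\Bx$ fixed, the small part becomes the factor from the second paragraph, which by the key estimate tends in probability (hence, being bounded, in expectation) to $e^{-t^2\|\Bu''\|_2^2/2}$; the leftover factor $\E\exp(it\sqrt n\sum_{j\in S}\bar x_ju_j)$ tends to $e^{-t^2\|\Bu'\|_2^2/2}$ by \eqref{eqn:normality}. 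Therefore $\E\exp(it\sqrt n\,\Bx^\ast\Bu)\to e^{-t^2\|\Bu'\|_2^2/2}\,e^{-t^2\|\Bu''\|_2^2/2}=e^{-t^2/2}$ for all $t$, and Lévy's continuity theorem gives \eqref{eqn:innerproduct}; the complex case is identical after replacing $t^2/2$ by $|t|^2/4$. The crux is the key estimate: turning ``$n|x_j|^2\approx1$ on average, with only weak cross-coordinate dependence'' into a genuine $L^2$ bound. The exchangeability identity $\gamma=-v/(n-1)$ is what makes the off-diagonal covariances cancel against the diagonal, after which only the crude moment bound $\E(\sqrt n|x_1|)^4\ll(\log n)^2$ --- squeezed out of \eqref{eqn:infty:1} --- is needed.
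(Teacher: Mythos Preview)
Your argument is correct and proves \eqref{eqn:innerproduct} assuming the earlier parts \eqref{eqn:infty:1} and \eqref{eqn:normality}, which is exactly the logical position the paper takes in its Section~7. The route, however, is genuinely different. The paper decomposes $\Bx$ by truncating the coordinates at level $\alpha/\sqrt n$, shows $\sqrt n\,\Bx_{>}^T\Bu\to 0$ in probability via a second-moment bound (symmetry of $\xi$ kills the off-diagonal), and then establishes $\sqrt n\,\Bx_{\le}^T\Bu\to\Bg_\F$ by matching \emph{all moments} with those of a Gaussian, using \eqref{eqn:normality} to evaluate $\E\prod_k x_{i_k,\le}$. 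You instead decompose the deterministic vector $\Bu$ by the size of its coordinates, exploit the symmetry of $\xi$ to randomize the signs of $(x_j)_{j\in S^c}$, and run a conditional CLT via characteristic functions; the crux then shifts to showing that the conditional variance $\tau^2=n\sum_{j\in S^c}|x_j|^2|u_j|^2$ concentrates at $\|\Bu''\|_2^2$. Your exchangeability identity $\gamma=-v/(n-1)$ is a neat device (absent from the paper) that turns this into the elementary fourth-moment bound $\E(\sqrt n|x_1|)^4\ll(\log n)^2$, which you correctly extract from \eqref{eqn:infty:1} by integrating the tail. What your approach buys is avoiding the moment method entirely---no Carleman criterion, no bookkeeping of multiplicities---at the cost of the extra randomization step; what the paper's approach buys is a slightly more transparent use of \eqref{eqn:normality} on each moment. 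Both routes use symmetry of $\xi$ in essentially the same place (to manufacture independence/vanishing of cross terms), and both feed the ``large-coordinate'' piece directly into \eqref{eqn:normality}.
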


It also follows easily from  \eqref{eqn:infty:1} and \eqref{eqn:normality} that with high probability $\|\Bx\|_\infty = \Theta(\sqrt{\frac{\log n}{n}})$. Indeed, it is clear that with high probability, with $m = n^c$ for some sufficiently small $c$, $\max\{|\Bg_{\F,1}|,\dots,|\Bg_{\F,m}|\} \gg \sqrt{\log m} =\sqrt{c \log n}$. Thus by \eqref{eqn:normality}, with high probability $\max\{|x_1|,\dots,|x_m|\}\gg \sqrt{\frac{\log n}{n}}$. 

Our approach can  be extended to unit vectors orthogonal to the rows of an iid matrices $A$ of size $(n-k)\times n$,  for any fixed $k$ or even $k$ grows slowly with $n$; the details will appear in a later paper. 

As random hyperplanes appear frequently in various areas, including  random matrix theory, high dimensional geometry, statistics, and theoretical computer science,  we expect  that Theorem \ref{thm:linear} will be useful. For the rest of this section, we  discuss two applications. 

\subsection{Tail bound for the  least singular value of a random iid matrix}  Given an $n \times n$ random matrix $M_n(\xi) $  with entries being iid copies of a normalized  variable $\xi$. 
Let  $\sigma_1 \ge \dots \ge \sigma _n \ge 0$ be its singular values. The two extremal $\sigma_1$ and $\sigma_n$ are of special interest, and was  studied by Goldstein and von Neumann, as they 
tried  to analyze the running time of solving a system of random equations $M_n x =b$. 
 
 In \cite{GN},  Goldstein and von Neumann speculated that $\sigma_n$ is of order $n^{-1/2} $, which  turned out to be correct. In particular, 
  $\sqrt n \sigma_n$  tends to a limiting distribution, which was computed explicitly by Edelman in \cite{Edelman}  in the gaussian case.   
  
\begin{theorem} \label{Edelman} 
For any $t\ge 0$ we have 

$$\P(\sigma_n(M_{\Bg_\R}) \le t n^{-1/2}) = \int_{0}^t \frac{1+\sqrt{x}}{2 \sqrt{x}} e^{-x/2 +\sqrt{x}} dx + o(1)$$

as well as 

$$\P(\sigma_n(M_{\Bg_\C}) \le t n^{-1/2}) = \int_{0}^t e^{-x} dx.$$

\end{theorem}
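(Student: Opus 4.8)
The plan is to diagonalize and reduce everything to the exact eigenvalue statistics of the associated Wishart (Laguerre) ensembles. Writing $W=M^\ast M$, one has $\sigma_n(M)^2=\lambda_{\min}(W)$, so it suffices to compute $\P(\lambda_{\min}(W)>s)$ for $s$ of order $1/n$. When $M=M_{\Bg_\C}$ is complex Gaussian, $W$ is a complex Wishart matrix whose ordered eigenvalues $\lambda_1\ge\dots\ge\lambda_n\ge 0$ have joint density proportional to $\prod_{i<j}(\lambda_i-\lambda_j)^2\prod_i e^{-\lambda_i}$ on the positive orthant; when $M=M_{\Bg_\R}$, $W$ is real Wishart and the joint density is proportional to $\prod_{i<j}|\lambda_i-\lambda_j|\prod_i\lambda_i^{-1/2}e^{-\lambda_i/2}$. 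These are classical facts about matrix-variate distributions.

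For the complex case this essentially finishes the proof for free. Since the Vandermonde factor is translation invariant, substituting $\lambda_i=s+\mu_i$ in the numerator of
\[
\P(\lambda_{\min}(W)>s)=\frac{\int_{[s,\infty)^n}\prod_{i<j}(\lambda_i-\lambda_j)^2\prod_i e^{-\lambda_i}\,d\lambda}{\int_{[0,\infty)^n}\prod_{i<j}(\lambda_i-\lambda_j)^2\prod_i e^{-\lambda_i}\,d\lambda}
\]
turns the integrand into $e^{-ns}$ times exactly the denominator, so $\P(\lambda_{\min}(W)>s)=e^{-ns}$ identically in $n$. Taking $s=x/n$ gives $\P\bigl(n\,\sigma_n(M_{\Bg_\C})^2>x\bigr)=e^{-x}$, i.e.\ the stated limit law (here with no error term at all, once one unwinds the normalization).

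For the real case the same substitution $\lambda_i=s+\mu_i$ only pulls out $e^{-ns/2}$ and leaves the residual factor $\prod_i(\mu_i+s)^{-1/2}$ inside the integral, which no longer factors; this asymptotic analysis is the one genuine obstacle. I would use Edelman's exact finite-$n$ formula for the density of $\lambda_{\min}(W)$ in the square real case, which can be put in closed form in terms of a confluent hypergeometric function (equivalently a finite sum of Laguerre polynomials), and then let $n\to\infty$ with $\lambda=x/n$, using the standard confluent-hypergeometric-to-Bessel scaling limit and Laguerre asymptotics to collapse it to $\frac{1+\sqrt x}{2\sqrt x}e^{-x/2-\sqrt x}$; a useful consistency check is that this density integrates to $1$ (substitute $u=\sqrt x$ and complete the square). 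An alternative is to write $\P(\lambda_{\min}(W)>s)$ as a Fredholm Pfaffian of the finite-$n$ Laguerre-orthogonal kernel, rescale at the hard edge $s\sim x/n$, invoke convergence of this kernel to the $\beta=1$ Bessel-type hard-edge kernel, and evaluate the limiting Fredholm Pfaffian. A third, purely probabilistic heuristic — which also explains why the answer lives on the scale $n^{-1/2}$ — is to pass to the Dumitriu–Edelman bidiagonal model of $M_{\Bg_\R}$, whose bottom-right corner involves only $\chi$-variables with a bounded number of degrees of freedom while all other entries are of size $\Theta(\sqrt n)$, so that $\sigma_n$ is governed by that corner; turning this into the exact limiting density still requires the special-function input above. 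Of these I would pursue the first route, as it is the most self-contained and makes the elementary closed form transparent.
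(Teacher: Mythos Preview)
The paper does not prove this theorem at all: it is quoted as background from Edelman's 1988 paper (reference \cite{Edelman} in the text) and used only as input to the later discussion of universality and tail bounds. There is therefore no ``paper's own proof'' to compare your attempt against.

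That said, what you wrote is essentially Edelman's original argument. The complex case is exactly his: the translation-invariance of the squared Vandermonde in the joint Wishart density gives the \emph{exact} identity $\P(\lambda_{\min}(W)>s)=e^{-ns}$ for every $n$, so no asymptotics are needed. For the real case your ``first route'' --- Edelman's closed-form finite-$n$ density for $\lambda_{\min}$ of square real Wishart, followed by confluent-hypergeometric asymptotics at the hard edge --- is precisely what is done in the cited paper. The Fredholm--Pfaffian and bidiagonal alternatives you list would also work but are later and heavier machinery; you are right to prefer the first.

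Two small remarks. First, you correctly wrote the real limiting density with $e^{-x/2-\sqrt x}$; the $+\sqrt x$ in the paper's display (and in its ``In other words'' line) is a typo, as your integration check confirms. Second, watch the scaling: what your complex argument yields is that $n\sigma_n^2$ is exactly $\mathrm{Exp}(1)$, i.e.\ the density $e^{-x}$ is that of $n\sigma_n^2$, not of $\sqrt n\,\sigma_n$. The theorem as literally typeset in the paper conflates these two normalizations, so your hedge ``once one unwinds the normalization'' is exactly the right caveat.
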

  
In other words, $\P(\sigma_n(M_{\Bg_\R}) \le t n^{-1/2}) =1- e^{-t/2 +\sqrt{t}} + o(1)$ and  $\P(\sigma_n(M_{\Bg_\C}) \le t n^{-1/2}) = 1- e^{-t} $. These distributions have been confirmed to be universal (in the asymptotic sense) by Tao and the second author \cite{TVleast}. 

In applications,  one usually needs large deviation results, which show that the probability that $\sigma_n$ is far from its mean is very small. For the lower bound, Rudelson and Vershyin \cite{RV-LO} proved that for any $t>0$ 
 
\begin{equation}\label{eqn:leastsing:lower}
\P ( \sigma_n \le t n^{-1/2})  \le C t + .999^n, 
\end{equation}
 
 which is sharp up to the constant $C$.  For the upper bound,  in a different paper \cite{RV-upper}, the same authors showed
 
\begin{equation}\label{eqn:leastsing:upper}
\P( \sigma_n \ge t n^{-1/2} )  \le C \frac{\log t}{t} .
\end{equation}

Using Theorem \ref{thm:linear}, we improve this result significantly by proving an exponential tail bound,

 \begin{theorem}[Exponential  upper tail for  the least singular values] \label{cor:upper} Assume that the entries of $M_n=(m_{ij})_{1\le i,j\le n}$ are iid copies of a normalized subgaussian random variable $\xi$ in either $\R$ or $\C$. Then there exist absolute constants $C_1,C_2$ depending on $K_0$ such that
 $$\P( \sigma_n \ge t n^{-1/2} )  \le C_1  \exp( - C_2 t ) . $$
 \end{theorem}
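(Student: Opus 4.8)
\emph{Strategy.} Condition on the first $n-1$ rows $\row_1,\dots,\row_{n-1}$ of $M_n$; write $A$ for the $(n-1)\times n$ matrix they form, $0<\sigma_{n-1}(A)\le\cdots\le\sigma_1(A)$ for its singular values, $w_1,\dots,w_{n-1}$ for the associated right singular vectors, and $w_n=\Bx$ for the unit kernel vector of $A$ --- precisely the random normal vector studied in Theorem~\ref{thm:linear}. Since $M_n^\ast M_n=A^\ast A+\row_n^\ast\row_n$, the secular equation for a rank-one update (or, more robustly, a variational argument over $\mathrm{span}(w_n,w_{n-1},\dots,w_{n-L})$) gives, with $z_i:=\langle w_i,\row_n\rangle$, the clean deterministic bound
\[
\sigma_n(M_n)^2\ \le\ \frac{|z_n|^2}{\,1+\sum_{m=1}^{L}|z_{n-m}|^2\big/\sigma_{n-m}(A)^2\,}\qquad(L\ge1).
\]
Thus the upper tail of $\sqrt n\,\sigma_n$ is governed by the ratio of $|z_n|^2$ to a weighted sum of the $|z_{n-m}|^2$, the weights $\sigma_{n-m}(A)^{-2}$ being --- on a high-probability event for $A$ --- of order $n/m^2$.

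\emph{Role of Theorem~\ref{thm:linear}.} By independence of $\row_n$ from $A$, each $z_i=\sum_j(w_i)_j\xi_j$ is a normalized linear form with weight vector $w_i$. For $i=n$, Theorem~\ref{thm:linear} gives $\|w_n\|_\infty=O(\sqrt{\log n/n})$ and, via \eqref{eqn:normality}, joint near-normality of blocks of coordinates; combined with the extension indicated in the paper (kernels of $(n-k)\times n$ i.i.d.\ matrices, $k$ slowly growing) this should also yield the same optimal delocalization for the bottom singular vectors $w_{n-1},\dots,w_{n-L}$ with $L=\log^{O(1)}n$. Feeding delocalization into a Berry--Esseen estimate makes $(z_n,z_{n-1},\dots,z_{n-L})$, conditionally on $A$, jointly within $o(1)$ of $(L+1)$ i.i.d.\ standard Gaussians; together with the high-probability control $\sigma_{n-m}(A)^{-2}\asymp n/m^2$ this reduces the estimate to the Gaussian model, where $\P\big(|z_n|^2\ge\tfrac{t^2}{C}\sum_{m\le L}|z_{n-m}|^2/m^2\big)\le\prod_{m=1}^{L}\big(1+t^2/(Cm^2)\big)^{-1/2}$, and for $t\le\log^{O(1)}n$ this product is $\le C_1e^{-C_2t}$ --- it is, up to constants, exactly the explicit Edelman tail of Theorem~\ref{Edelman}.

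\emph{Remaining ranges.} For $t\gtrsim n$ the event already forces the conditionally sub-gaussian linear form $z_n=\langle\row_n,\Bx\rangle$ to exceed $tn^{-1/2}$, so a Bernstein bound gives $\P\le C\exp(-c\,t^2/n)\le C\exp(-ct)$ (the trivially large $t$ are also absorbed by the net bound on $\|M_n\|_{\mathrm{op}}$). For $\log^{O(1)}n\le t\le n$ I would argue crudely but robustly: after a block decomposition forcing $\Omega(t)$ of the conditionally independent distances $\dist(\row_i,H_i)$ to be simultaneously $\ge tn^{-1/2}$, each such constraint costs a factor $1-\Omega(tn^{-1/2})$ by the anti-concentration supplied by the normality in Theorem~\ref{thm:linear}, and these multiply to $\exp(-\Omega(t))$. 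Patching the three regimes yields $\P(\sigma_n\ge tn^{-1/2})\le C_1\exp(-C_2t)$.

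\emph{Main obstacle.} The crux is the joint near-normality of $(z_n,z_{n-1},\dots,z_{n-L})$. This needs delocalization not only of the normal vector $\Bx$ but of the bottom $\log^{O(1)}n$ singular vectors of the rectangular matrix $A$ (which are \emph{not} themselves kernel vectors of a submatrix, so this requires genuinely pushing the methods behind Theorem~\ref{thm:linear}), and --- more seriously --- the comparison must be quantitative enough to survive being inserted into a probability that is itself as small as $e^{-C_2t}$. This is exactly why the full strength of Theorem~\ref{thm:linear} (the optimal $\ell^\infty$ bound and the explicit rate in \eqref{eqn:normality}) is needed rather than a soft central limit theorem, and it is where the sub-gaussian hypothesis is used; the accompanying bookkeeping --- keeping the directions delocalized \emph{after} conditioning, and arranging genuine independence in the block decomposition --- is where the bulk of the technical work lies.
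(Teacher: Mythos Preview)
Your approach is genuinely different from the paper's, and the decisive gap is the range $C\log n \le t \le n$.

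The paper splits into only two regimes. For $t \le C\log n$ it invokes the universality result of \cite{TVleast} to compare $\P(\sigma_n\ge tn^{-1/2})$ directly with the explicit Gaussian tail of Theorem~\ref{Edelman}; the $n^{-c}$ comparison error is harmless here. For $t > C\log n$ it does \emph{not} delete a row but instead \emph{adds} a fresh iid column $\col_1$ to $M_n$, forming an $n\times(n+1)$ matrix whose kernel vector $\Bx=(x_1,\dots,x_{n+1})$ satisfies
\[
|x_1|^{-2} \;=\; 1 + \sum_{j=1}^{n} \sigma_j(M_n)^{-2}\,|\col_1^T\Bu_j|^2,
\]
with $\Bu_j$ the left singular vectors of $M_n$. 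The exponential tail of \eqref{eqn:infty:1} gives $\P(|x_1|\ge (t/n)^{1/2})=O(e^{-t})$, hence $\sum_j \sigma_j^{-2}|\col_1^T\Bu_j|^2 \ge n/t$ except with probability $O(e^{-t})$. The passage from this weighted sum to $\sigma_n$ uses only Lemma~\ref{lemma:VW} (a Hanson--Wright-type concentration valid for \emph{any} orthonormal system; no delocalization of the $\Bu_j$ is needed) together with the negative second moment identity and Cauchy interlacing to show $\sum_{j\le n-100t}\sigma_j^{-2} \le n/(2t)$. This forces $\sigma_n^{-2}\gtrsim n/t^2$, i.e.\ $\sigma_n\lesssim t/\sqrt n$.

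Your scheme, by contrast, needs delocalization of the bottom $L$ right singular vectors $w_{n-1},\dots,w_{n-L}$ of $A$ --- a statement the paper neither proves nor uses --- and then feeds this into a Berry--Esseen comparison. But the error in \eqref{eqn:normality} is only $n^{-c}$ (or $d^{-c'}$), so that comparison cannot certify any probability below $n^{-c}$; this caps your first regime at $t=O(\log n)$, not $\log^{O(1)}n$. That leaves the hard range $C\log n \le t \le n$ to your ``block decomposition'' sketch, which does not stand up: the distances $\dist(\row_i,H_i)$ are not conditionally independent under any natural blocking, and the claimed cost factor $1-\Omega(tn^{-1/2})$ per constraint does not follow from Theorem~\ref{thm:linear} (each $\dist(\row_i,H_i)$ concentrates near $1$, so the event $\dist(\row_i,H_i)\ge tn^{-1/2}$ is essentially unconstrained for $t\ll\sqrt n$). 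The paper's device --- adding a column and reading off the exponential tail of a \emph{single} coordinate $|x_1|$ via \eqref{eqn:infty:1}, then decoupling by a concentration inequality that is insensitive to the singular-vector structure --- is exactly what lets it bypass both obstacles.
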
 
 
Our proof of Theorem \ref{cor:upper} is totally different from that of \cite{RV-upper}. As showed in the gaussian case, the  exponential bound is sharp,  up to the value of $C_2$.

\subsection{Eigenvectors of random iid matrices.} Our theorem is closely related to (and in fact was motivated by) recent results concerning delocalization and normality of eigenvectors of random matrices.  For random Hermitian matrices, there have been many results achieving almost optimal delocalization of eigenvectors, starting with the work \cite{E3} by Erd\H{o}s et al.  and and continued by Tao et al. and by many others in  \cite{TVuniversality, VW,1,2,3,4,5,6,7,8,9}. Thanks to new universality techniques, one also proved normality of the eigenvectors; see for instance the work \cite{KY} by Knowles and Yin, \cite{TVvector} by Tao and Vu, and \cite{BY} by Bourgade and Yau.

For non-Hermitian random matrix $M_n(\xi)=(m_{ij})_{1\le i,j\le n}$, much less is known. Let $\lambda_1,\dots, \lambda_n$ be the eigenvalues  with $|\lambda_1|\ge \dots \ge |\lambda_n|$. Let $\Bv_1,\dots,\Bv_n$ be the corresponding unit eigenvectors (where $\Bv_i$ are chosen according to the Haar measure from the eigensphere if the corresponding roots are multiple). 
 Recently, Rudelson and Vershynin  \cite{RV-del} proved that with overwhelming probability all of the eigenvectors satisfy 

\begin{equation}\label{eqn:RV:deloc}
\|\Bv_i\|_\infty =O(\frac{\log^{9/2} n}{\sqrt n}).
\end{equation}


 
 By modifying the   proof of Theorem \ref{thm:linear}, we are able  sharpen this bound  for eigenvectors of eigenvalues with small modulus.

\begin{theorem} [Optimal delocalization for small eigenvectors]\label{cor:eigenvectors} Assume that the entries of $M_n=(m_{ij})_{1\le i,j\le n}$ are iid copies of a normalized subgaussian random variable $\xi$ in either $\R$ or $\C$. Then for any fixed $\eps>0$, with overwhelming probability the following holds for any unit eigenvector $\Bx$ corresponding to an eigenvalue $\lambda$ of $A$ with $|\lambda| = O(1)$

$$\|\Bx\|_\infty =O(\sqrt{\frac{\log n}{n}}).$$

\end{theorem}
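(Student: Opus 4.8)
The plan is to deduce Theorem \ref{cor:eigenvectors} from (a suitable variant of) Theorem \ref{thm:linear} by conditioning on the minor. Fix an index $i \in \{1,\dots,n\}$ and let $M^{(i)}$ be the $(n-1)\times n$ matrix obtained from $M_n$ by deleting the $i$-th row. If $\Bx = (x_1,\dots,x_n)$ is a unit eigenvector of $M_n$ with eigenvalue $\lambda$, then the $n-1$ equations coming from the rows of $M^{(i)}$ read $\row_j(M_n) \cdot \Bx = \lambda x_j$ for $j \neq i$; equivalently, $\Bx$ is orthogonal to the $n-1$ row vectors $\row_j(M_n) - \lambda \Be_j$ of the matrix $M^{(i)} - \lambda (I \text{ restricted to rows } \neq i)$. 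The point is that these $n-1$ vectors are \emph{independent} once we condition on $\lambda$ and on the entry-structure outside column $i$ — more precisely, the normal vector to their span is determined, up to phase, and the first coordinate-type information we need is $|x_i|$, which only involves the entries in the $i$-th column of $M^{(i)}$, entries that are independent of the $n-1$ deleted-row vectors. So conditionally, $x_i$ plays exactly the role of one coordinate of the normal vector $\Bx$ studied in Theorem \ref{thm:linear}.

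The main obstacle is that $\lambda$ is \emph{not} independent of $M^{(i)}$: the eigenvalue depends on the full matrix, including the $i$-th row. To handle this one should not condition on the exact value of $\lambda$ but rather run a net argument. First, by standard facts (e.g. the bound $\sigma_1(M_n) = O(\sqrt n)$ with overwhelming probability, and the operator-norm / spectral-radius control), all eigenvalues with $|\lambda| = O(1)$ lie in a fixed disc $D$; cover $D$ by an $n^{-B}$-net $\CN$ of size $n^{O(1)}$. For each fixed $z \in \CN$, the shifted matrix $M^{(i)} - z(\cdot)$ has rows that are iid copies of a shifted (but still normalized-after-centering, subgaussian) vector, plus a fixed deterministic shift in one coordinate; Theorem \ref{thm:linear}'s proof applies to give that the normal vector $\Bx^{(i,z)}$ to the span of these rows satisfies $\|\Bx^{(i,z)}\|_\infty = O(\sqrt{\log n / n})$ with overwhelming probability, in particular $|x_i^{(i,z)}| = O(\sqrt{\log n/n})$. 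Union-bounding over $z \in \CN$ and over $i \in \{1,\dots,n\}$ costs only a polynomial factor, which is absorbed by "overwhelming probability."

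The remaining step is to transfer from the net back to the true eigenvalue. Suppose $\Bx$ is a genuine unit eigenvector with eigenvalue $\lambda$, $|\lambda| = O(1)$; pick $z \in \CN$ with $|z - \lambda| \le n^{-B}$. Then $\Bx$ is an \emph{approximate} null vector of $N_z := M^{(i)} - z(\cdot)$, with $\|N_z \Bx\| \le |z - \lambda| \le n^{-B}$. To conclude $|x_i|$ is close to $|x_i^{(i,z)}|$ one needs the least singular value of $N_z$ (equivalently $\sigma_n$ of the associated square-ish system, or the distance from $\row_i$ to the span in the companion setup) to be $\ge n^{-B/2}$, say, with overwhelming probability — this is exactly the kind of invertibility estimate provided by Rudelson–Vershynin type bounds \cite{RV-LO} for shifted iid matrices, and it lets us write $\Bx = \Bx^{(i,z)} + (\text{error of size } \le n^{-B/2})$ after normalizing and matching phases. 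Choosing $B$ large enough (e.g. $B = 10$) the error is $o(n^{-1})$, far below the target $\sqrt{\log n/n}$. Taking the supremum over $i$ gives $\|\Bx\|_\infty = O(\sqrt{\log n/n})$ for every such eigenvector simultaneously, with overwhelming probability, which is the claim. I expect the only genuinely delicate points to be (i) checking that the shifted row vectors still satisfy the subgaussian/normalized hypotheses needed to invoke (the proof of) Theorem \ref{thm:linear} uniformly over $z$, and (ii) the quantitative invertibility of $N_z$ down to a polynomially small scale, uniformly over the net.
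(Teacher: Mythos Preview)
Your overall strategy is correct and close to the paper's: both use a net over the shift parameter and then rerun the column-projection argument from the proof of \eqref{eqn:infty:1} for the shifted matrix, handling the diagonal perturbation $-\lambda_0 \Bf_j$ as an $O(1)$ correction (this is your delicate point (i), and it is exactly the content of the paper's Lemma~\ref{lemma:individual'}). The paper, however, organizes the endgame differently. Instead of deleting a row, proving delocalization of the \emph{exact} null vector of the resulting $(n-1)\times n$ block, and then invoking a separate singular-value stability step (your point (ii)), the paper shows directly that every \emph{approximate} null vector of the full $n\times n$ matrix $M_n-\lambda_0$ --- any unit $\Bx$ with $\|(M_n-\lambda_0)\Bx\|_2\le 1/n$ --- is delocalized (Theorem~\ref{theorem:eiginfty}). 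The projection identity \eqref{eqn:starting} simply becomes \eqref{eqn:starting'} with a residual $\Br$ of norm at most $1/n$, and the extra $\|\Br\|_2^2\le n^{-2}$ is harmless in the bound for $|x_1|^2\|P_H\col_1\|_2^2$. This buys the paper a cleaner proof: no quantitative lower bound on $\sigma_{n-1}(N_z)$ is needed, and there is no row deletion layered on top of the net. Your route also works, but two small points: the Rudelson--Vershynin rectangular bound only gives $\P(\sigma_{n-1}(N_z)\le n^{-B/2})\le O(n^{-(B-1)})$, so to get ``overwhelming'' you must let $B$ depend on the target exponent $K$ rather than fix $B=10$; and your remark that the entries of the $i$-th column of $M^{(i)}$ are ``independent of the $n-1$ deleted-row vectors'' is not correct (those entries sit inside those very rows) --- what you actually need, and what holds, is that the \emph{columns} of $N_z$ are independent, with column $i$ conveniently shift-free after row $i$ is removed.
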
 


We believe  that the individual eigenvector in Theorem \ref{cor:eigenvectors} satisfies the normality property \eqref{eqn:normality}, which 
would imply that the bound $O(\sqrt{\frac{\log n}{n}})$ is optimal up to a multiplicative constant.   Figure \ref{fig:1} below shows that the first coordinate of the eigenvector corresponding to the smallest eigenvalue 
behaves like a gaussian random variable.

\begin{center} 
\begin{figure}[!ht]
   \centerline{\includegraphics[width=0.6\textwidth]{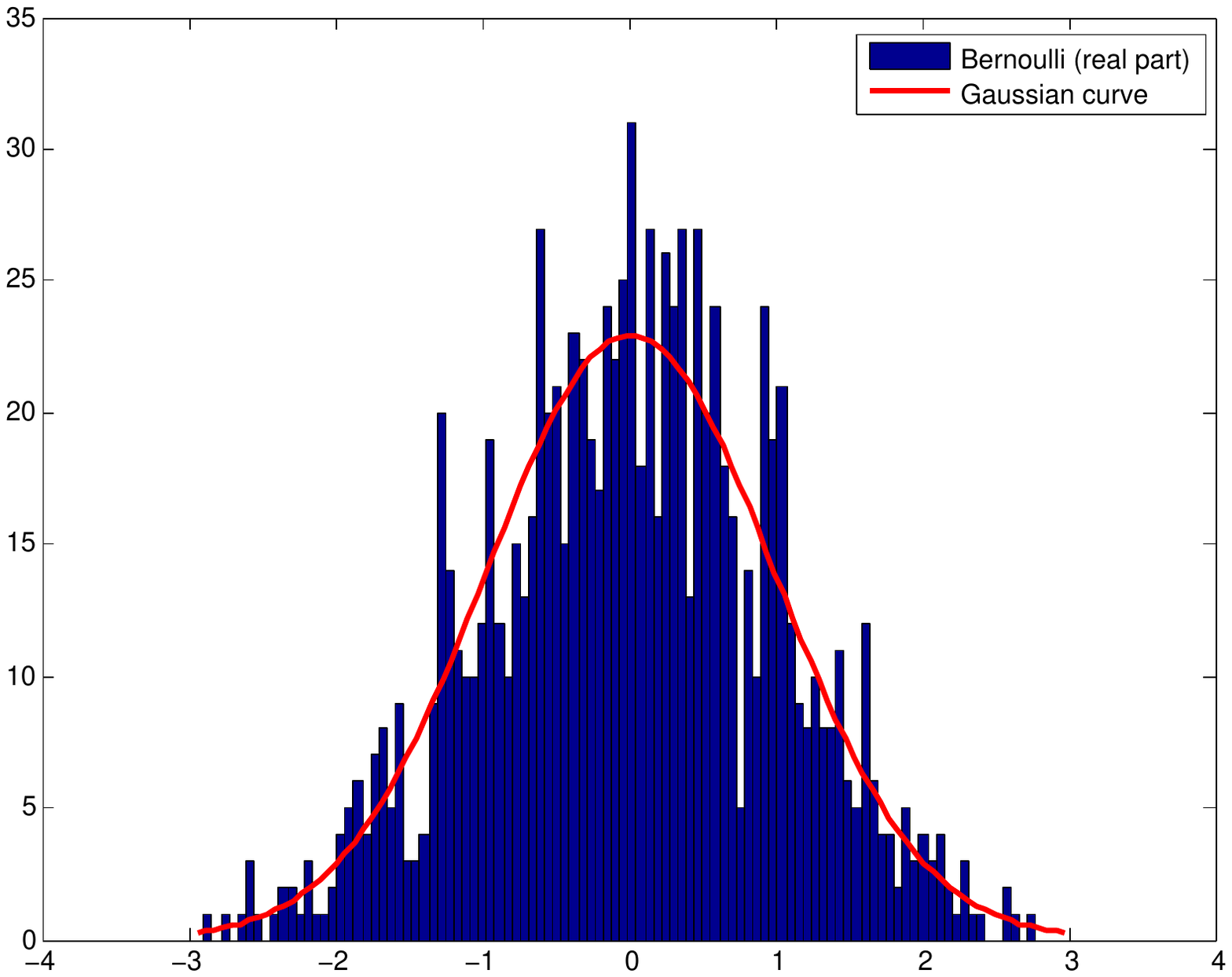} \includegraphics[width=0.6\textwidth]{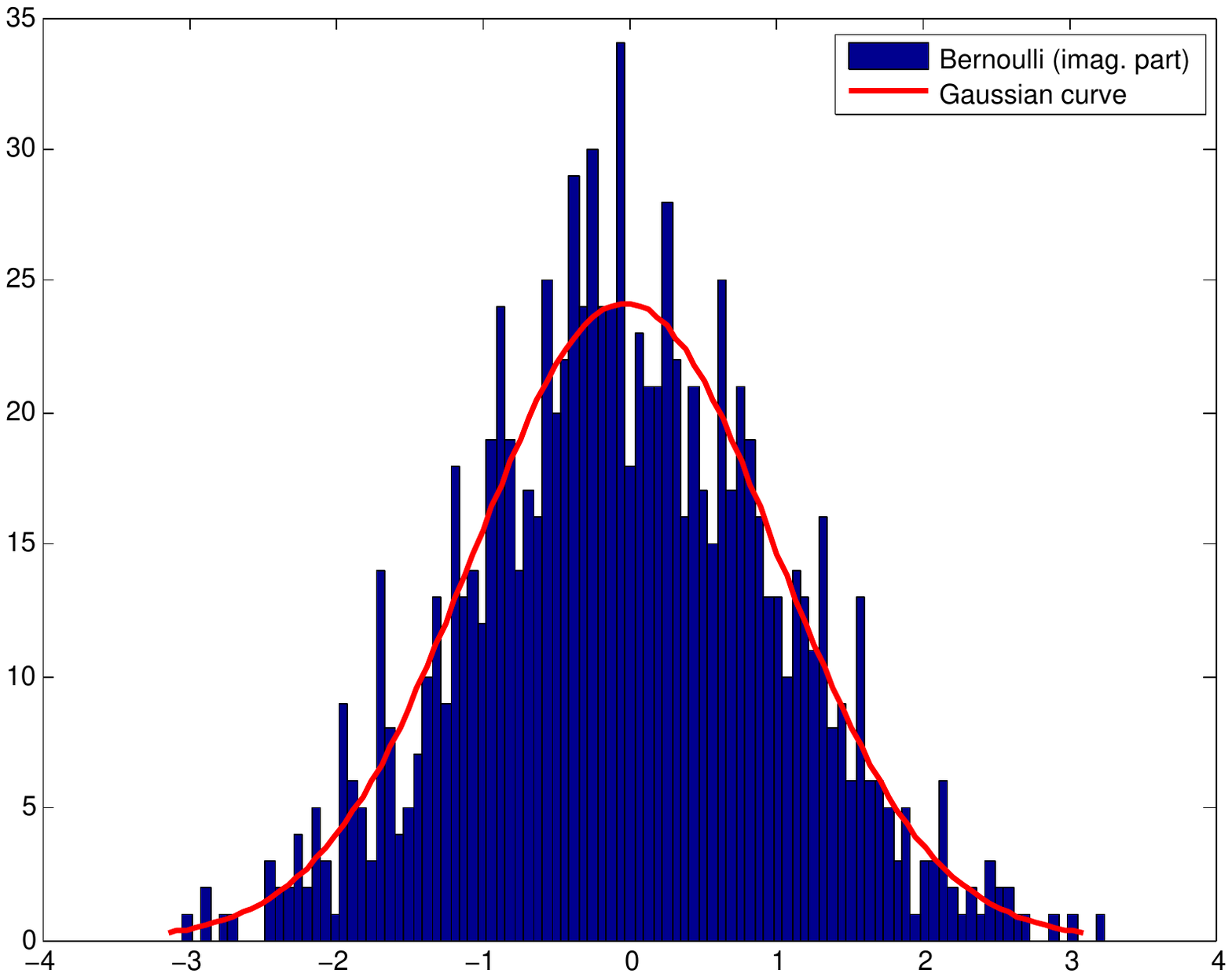} }
   \caption{We sampled 1000 random complex iid Bernoulli matrices of size $n=500$.  The histograms represent the normalized real and imaginary parts  $\sqrt{2n} \Re(\Bv(1))$ and  $\sqrt{2n} \Im(\Bv(1))$ of the first coordinate of the unit eigenvector $\Bv$ associated with the eigenvalue of smallest modulus.} \label{fig:1}
\end{figure}
\end{center}

Finally, let us mention that all of our results  holds (with logarithmic correction) under a weaker assumption that  
the  variable $\xi$  is sub-exponential, namely there are positive constants $C, C'$ and $\alpha$ such that for all $t$ 
$\P ( |\xi| \ge t ) \le C  \exp( - C' t^{\alpha } ) $; see Remark \ref{remark:sub}.

The rest of the paper is organized as follows. After introducing supporting lemmas in Section \ref{section:lemmas}, we will prove  \eqref{eqn:infty:1} and Theorem \ref{cor:eigenvectors} in Section \ref{section:infty}. Section \ref{section:normality} and Section \ref{section:innerproduct}  are devoted to proving \eqref{eqn:normality} and  \eqref{eqn:innerproduct} correspondingly, while \eqref{eqn:infty:2} will be shown in Section \ref{section:non-gap}. Finally, we prove Theorem \ref{cor:upper} in Section \ref{section:singularvalue}.

\section{The lemmas}\label{section:lemmas}

We will use the 
 following well-known concentration result of distances in random non-Hermitian matrices  (see for instance \cite[Lemma 43]{TVuniversality}, \cite[Corollary 2.19]{Tao-RMT} or \cite{VW}). 

\begin{lemma}\label{thm:distance} Let $H$ be a subspace of co-dimension $m$ in $\F^l$ and let $P_H$ be the projection matrix onto the complement $H^\perp$ of $H$. Let $\Bu=(u_1,\dots,u_{l})$ and $\Bv=(v_1,\dots,v_{l})$ be independent random vectors where $u_i,v_i$ are iid copies of an $\F$-normalized sub-gaussian random variable $\xi$. Then the following holds.  

\begin{enumerate}
\item the distance from $\Bu$ to $H$ is well concentrated around its mean,
$$\P\left(\|P_H \Bu\|_2- \sqrt{m}| \ge t \right) \le  \exp(-t^2/K_0^4);$$
\item the correlation $\Bv^T P_H \Bu$ is small,
 $$\P\left( |\Bv^T P_H \Bu| \ge t \right) \le  \exp(-t^2/K_0^4).$$
\end{enumerate}

\end{lemma}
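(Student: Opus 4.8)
The plan is to derive both parts from the standard concentration of measure for convex Lipschitz functions of a vector with independent sub-gaussian coordinates, together with two elementary second-moment identities. Throughout, when $\F=\C$ one identifies $\C^l$ with $\R^{2l}$, so that in either case the relevant object is a function of $l$ (resp.\ $2l$) independent real sub-gaussian variables with parameter comparable to $K_0$.

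\textbf{Part (1).} I would first note that $\Bu \mapsto \|P_H\Bu\|_2$ is convex (a norm composed with a linear map) and $1$-Lipschitz, because the orthogonal projection $P_H$ is a contraction: $\bigl|\,\|P_H\Bu\|_2 - \|P_H\Bu'\|_2\,\bigr| \le \|P_H(\Bu-\Bu')\|_2 \le \|\Bu-\Bu'\|_2$. The Talagrand-type concentration inequality for such functions --- which one may cite directly, or prove by truncating each coordinate at a scale $\sim\sqrt{K_0\log(1/\delta)}$, applying the bounded-difference inequality to the truncated vector, and bounding the truncation error via \eqref{eqn:xi} --- then yields $\P\bigl(\,\bigl|\,\|P_H\Bu\|_2 - M\,\bigr| \ge t\,\bigr) \le C\exp(-ct^2/K_0)$, where $M$ is a median of $\|P_H\Bu\|_2$. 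To locate $M$, use $\E u_i\bar u_j=\delta_{ij}$ to get $\E\|P_H\Bu\|_2^2 = \tr\bigl(P_H\,\E\,\Bu\Bu^*\bigr) = \tr P_H = m$; since the concentration bound forces $\Var\|P_H\Bu\|_2 = O(K_0)$, this gives $|M-\sqrt m| = O(\sqrt{K_0})$. Absorbing this shift and the absolute constants into the exponent, and using $K_0>1$ to weaken $K_0$ to $K_0^4$ in the denominator, yields the claimed bound.

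\textbf{Part (2).} Here I would condition on $\Bu$ and write $\Bv^T P_H\Bu = \langle \Bv, P_H\Bu\rangle$ (using $P_H^T=P_H$, with the conjugate-bilinear variant in the complex case). For a \emph{fixed} vector $\Bw=P_H\Bu$, the form $\Bv^T\Bw = \sum_i v_i w_i$ is a sum of independent sub-gaussian variables, hence $\P\bigl(|\Bv^T\Bw| \ge s \mid \Bu\bigr) \le 2\exp\bigl(-cs^2/(K_0\|\Bw\|_2^2)\bigr)$; the only quantity not under deterministic control is $\|\Bw\|_2=\|P_H\Bu\|_2$, which by Part (1) equals $\sqrt m + O(\sqrt{K_0})$ with overwhelming probability. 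I would then split on the event $\{\|P_H\Bu\|_2 \le \sqrt t\,\}$ and its complement: on the first, the linear-form bound applies with $\|\Bw\|_2^2 \le t$; the second has probability bounded by Part (1). Optimizing the split level produces the stated tail for $\Bv^T P_H\Bu$.

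The step I expect to be the main obstacle is the bookkeeping in Part (1): producing a convex-Lipschitz concentration inequality that is valid for genuinely unbounded (sub-gaussian) coordinates and carries an explicit dependence on $K_0$ --- the truncation must be arranged so that its contribution is dominated by the Gaussian-type main term --- and then pinning the median of $\|P_H\Bu\|_2$ to $\sqrt m$ up to an additive constant, for which the identity $\E\|P_H\Bu\|_2^2 = m$ is the essential input. Given Part (1), Part (2) is essentially routine, the only freedom being the choice of truncation level in the union bound.
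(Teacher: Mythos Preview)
The paper does not supply its own proof of this lemma: it is stated as a known concentration result with references to \cite{TVuniversality}, \cite{Tao-RMT}, and \cite{VW}. Your plan for Part~(1) --- convex $1$-Lipschitz concentration for a sub-gaussian vector, together with $\E\|P_H\Bu\|_2^2=\tr P_H=m$ to pin the median near $\sqrt m$ --- is exactly the standard route taken in those references (equivalently, one applies the Hanson--Wright inequality of Lemma~\ref{lemma:HW} to the quadratic form $\Bu^*P_H\Bu$ and transfers to the square root). So that part is fine.

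For Part~(2) there is a genuine gap, though it originates partly in the lemma's printed statement. Since $\E|\Bv^TP_H\Bu|^2=\|P_H\|_{HS}^2=m$, the bilinear form $\Bv^TP_H\Bu$ has standard deviation $\sqrt m$, so $\P(|\Bv^TP_H\Bu|\ge t)$ is of constant order for $t\asymp\sqrt m$ and no bound of the shape $\exp(-t^2/K_0^4)$ can hold uniformly in $m$. If you actually carry out your split at a threshold $\|P_H\Bu\|_2\le L$, the conditional linear tail is at best $\exp(-ct^2/(K_0L^2))$, while the complement costs $\exp(-c(L-\sqrt m)^2/K_0^4)$ by Part~(1); optimizing over $L$ yields only
\[
\P\bigl(|\Bv^TP_H\Bu|\ge t\bigr)\ \le\ C\exp\Bigl(-c\,\min\bigl(t^2/m,\ t\bigr)/K_0^4\Bigr),
\]
which is also exactly what Hanson--Wright (Lemma~\ref{lemma:HW}) gives when applied to the off-diagonal quadratic form in the concatenated vector $(\Bu,\Bv)$. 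Your sentence ``optimizing the split level produces the stated tail'' therefore overstates what the argument delivers: it produces the \emph{correct} tail, which differs from the printed one by a factor of $m$ in the denominator in the sub-gaussian regime. You should record and prove that corrected bound rather than the one displayed.
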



More generally, we have 

\begin{lemma}[Hanson-Wright inequality]\label{lemma:HW} There exists an absolute constant $c$ such that the following holds for any sub-gaussian $\F$-normalized random variable $\xi$ . Let $A$ be a fixed $l\times l$ Hermitian matrix. Consider a random vector $\Bx=(x_1,\dots,x_l)$ where the entries are iid copies of $\xi$. Then

$$\P(|\Bx^\ast A \Bx-\E \Bx^\ast A \Bx|>t) \le 2\exp\Big(-c\min (\frac{t^2}{K_0^4\|A\|^2_{HS}}, \frac{t}{K_0^2\|A\|_2})\Big).$$

In particularly, for any $t>0$

$$\P\left (|\Bx^\ast A \Bx-\E \Bx^\ast A \Bx|> t \|A\|_{HS} \right) \le O\Big(\exp(-c\frac{t^2}{K_0^4}) +\exp(-c\frac{t}{K_0^2})\Big).$$

\end{lemma}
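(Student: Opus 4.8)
The plan is to prove this through the decoupling approach to the Hanson--Wright inequality, in the form due to Rudelson and Vershynin. Write $A=(a_{ij})$ and $\Bx=(x_1,\dots,x_l)$, and decompose the centered quadratic form into its diagonal and off-diagonal parts,
$$\Bx^\ast A\Bx-\E\,\Bx^\ast A\Bx \;=\; \underbrace{\sum_{i}a_{ii}\big(|x_i|^2-\E|x_i|^2\big)}_{=:\,S_{\mathrm{diag}}}\;+\;\underbrace{\sum_{i\ne j}a_{ij}\,\bar x_i x_j}_{=:\,S_{\mathrm{off}}},$$
where we used that the $x_i$ are independent with mean zero, so $\E[\bar x_i x_j]=0$ for $i\ne j$ (and that $A$ is Hermitian, which makes both sums real-valued). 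It then suffices to bound $\P(|S_{\mathrm{diag}}|>t/2)$ and $\P(|S_{\mathrm{off}}|>t/2)$ separately and take a union bound.

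For the diagonal part, the variables $|x_i|^2-\E|x_i|^2$ are independent, mean zero, and sub-exponential with parameter $O(K_0^2)$ (immediate from the sub-gaussian tail \eqref{eqn:xi}). Bernstein's inequality for sums of independent sub-exponential variables therefore gives a tail of the form $2\exp\!\big(-c\min(t^2/(K_0^4\sum_i a_{ii}^2),\,t/(K_0^2\max_i|a_{ii}|))\big)$, which is dominated by the claimed bound since $\sum_i a_{ii}^2\le\|A\|_{HS}^2$ and $\max_i|a_{ii}|\le\|A\|_2$.

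The off-diagonal part is the crux. I would bound its moment generating function $\E\exp(\lambda S_{\mathrm{off}})$ for $\lambda\ge 0$ and then apply a Chernoff bound. The vanishing diagonal of $S_{\mathrm{off}}$ is precisely the hypothesis of the standard decoupling inequality for quadratic forms in independent variables, whose conclusion replaces $S_{\mathrm{off}}$, inside any convex function, by $C\langle\Bx',A\Bx\rangle$ with $\Bx'$ an independent copy of $\Bx$ and the \emph{full} bilinear sum $\sum_{i,j}a_{ij}\bar x_i' x_j$. Conditioning on $\Bx$, sub-gaussianity of the entries of $\Bx'$ yields $\E_{\Bx'}\exp\big(C\lambda\langle\Bx',A\Bx\rangle\big)\le\exp\big(C'\lambda^2K_0^2\|A\Bx\|_2^2\big)$, so the problem is reduced to estimating $\E_{\Bx}\exp\big(C'\lambda^2K_0^2\|A\Bx\|_2^2\big)$ --- again a quadratic form in $\Bx$, which one cannot simply diagonalize since the entries of $\Bx$ are only sub-gaussian, not rotation invariant. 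The device is a second linearization, now against an auxiliary standard Gaussian vector $\Bg$ (real or complex, matching $\F$): using the Gaussian identity $\E_{\Bg}\exp\big(\sqrt{2\beta}\,\langle\Bg,A\Bx\rangle\big)=\exp\big(\beta\|A\Bx\|_2^2\big)$, swapping expectations, and invoking sub-gaussianity of $\Bx$ once more, the estimate collapses to $\E_{\Bg}\exp\big(C''\lambda^2K_0^4\|A^\ast\Bg\|_2^2\big)$, which is a genuine Gaussian chaos. Since $\Bg$ is rotation invariant, diagonalizing $A^\ast A$ gives the exact formula $\E\exp(\mu\|A^\ast\Bg\|_2^2)=\prod_k(1-2\mu\,\sigma_k(A)^2)^{-1/2}\le\exp(2\mu\|A\|_{HS}^2)$, valid with this majorant whenever $\mu\le c/\|A\|_2^2$. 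Tracking constants, this shows $\E\exp(\lambda S_{\mathrm{off}})\le\exp(C\lambda^2K_0^4\|A\|_{HS}^2)$ for all $0\le\lambda\le c/(K_0^2\|A\|_2)$, and optimizing $\exp(C\lambda^2K_0^4\|A\|_{HS}^2-\lambda t)$ over this interval (interior optimum for small $t$, endpoint $\lambda=c/(K_0^2\|A\|_2)$ otherwise, the switch occurring when $t\asymp K_0^2\|A\|_{HS}^2/\|A\|_2$) produces the exponent $-c\min\big(t^2/(K_0^4\|A\|_{HS}^2),\,t/(K_0^2\|A\|_2)\big)$.

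Combining the diagonal and off-diagonal tails by a union bound gives the first displayed inequality, and the ``in particular'' statement follows by substituting $t\mapsto t\|A\|_{HS}$ and using $\|A\|_2\le\|A\|_{HS}$. All steps are insensitive to whether $\F=\R$ or $\C$: in the complex case one may run the argument on the $2l$-dimensional real sub-gaussian vector $(\Re\Bx,\Im\Bx)$, or carry the Hermitian structure through verbatim. The main obstacle, as highlighted, is the off-diagonal term: the two nested quadratic forms it spawns must be unraveled first by decoupling against an independent copy and then by linearizing against a Gaussian, after which the residual Gaussian chaos is handled by the exact eigenvalue computation above.
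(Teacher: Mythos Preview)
Your proposal is correct; it is precisely the decoupling-plus-Gaussian-comparison argument of Rudelson and Vershynin. Note that the paper does not supply its own proof of this lemma but simply cites \cite{RV-HW}, so your write-up is in fact the content behind that citation rather than an alternative to anything in the paper.
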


This lemma was first proved by Hanson and Wright in a special case \cite{HW}. The above general  version is due to Rudelson and Vershynin \cite{RV-HW}; see also \cite{VW} for related results which hold (with logarithmic correction) for sub-exponential variables.

\begin{remark} \label{remark:sub} 
As mentioned at the end of the introduction, the results of this paper hold (with logarithmic correction)  for sub-exponential variables. One can achieve this by repeating the proofs, using the results from \cite{VW} (such as \cite[Corollary 1.6]{VW}) 
instead of Lemmas \ref{thm:distance} and \ref{lemma:HW}. We leave the details as an exercise. 
\end{remark} 

The next tool is Berry-Ess\'een theorem for frames, proved by Tao and Vu in \cite{TVleast}. As the statement is  technical, let us first warm the reader up by the classical Berry-Ess\'een theorem.

\begin{lemma}[Berry-Ess\'een theorem]\label{lemma:BE'} Let $v_1,\dots,v_l\in \F$ be real numbers with $\sum_i |v_i|^2=1$ and let $\xi$ be a $\F$-normalized random variable with finite third moment $\E |\xi|^3 < \infty$. Let $S$ denote the random sum

$$S=\sum_i v_i \xi_i,$$

where $\xi_i$ are iid copies of $\xi$. The for any $t\in \F$ we have 

$$\P(|S|\le t) = \P(|\Bg_\F| \le t) + O(\sum_i |v_i|^3),$$

where the implied constant depends on the third moment of $\xi$. In particularly,

$$\P(|S|\le t) = \P(|\Bg_\F| \le t) + O(\max_i |v_i|).$$

\end{lemma}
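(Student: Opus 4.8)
The plan is to follow the classical Fourier-analytic route via Ess\'een's smoothing inequality, reducing the complex case to a two-dimensional Berry-Ess\'een statement over a disc. Since $\sum_i|v_i|^2=1$ we have $\sum_i|v_i|^3\le\max_i|v_i|$, so the ``in particular'' clause is immediate from the main estimate; write $L:=\E|\xi|^3\sum_i|v_i|^3$, which will play the role of the Lyapunov ratio, and note $\max_i|v_i|\le(\sum_i|v_i|^3)^{1/3}$.

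First I would treat $\F=\R$. Here $\{|S|\le t\}=\{-t\le S\le t\}$, so it suffices to control the Kolmogorov distance $\Delta:=\sup_{s\in\R}|\P(S\le s)-\P(\Bg_\R\le s)|$ (any atom of $S$ at $\pm t$ is harmless because the Gaussian c.d.f.\ is continuous, so $\P(|S|\le t)$ differs from $\P(|\Bg_\R|\le t)$ by at most $2\Delta$). By Ess\'een's smoothing lemma, for every $T>0$,
\[
\Delta\ \le\ \frac1\pi\int_{-T}^{T}\frac{|\varphi_S(\theta)-e^{-\theta^2/2}|}{|\theta|}\,d\theta\ +\ \frac{C_0}{T},
\]
where $C_0$ is absolute and $\varphi_S(\theta)=\E e^{\Bi\theta S}=\prod_i\varphi_\xi(v_i\theta)$. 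Since $\xi$ is $\R$-normalized with finite third moment, a third-order Taylor expansion gives $\varphi_\xi(s)=1-\tfrac{s^2}{2}+O(\E|\xi|^3\,|s|^3)$; taking logarithms, summing over $i$, and choosing $T\asymp L^{-1}$ so that every $|v_i\theta|$ is small on $[-T,T]$, the standard estimates on products of characteristic functions yield $|\varphi_S(\theta)-e^{-\theta^2/2}|\ll L\,|\theta|^3e^{-\theta^2/4}$ on $[-T,T]$. The integral is then $\ll L\int_\R\theta^2e^{-\theta^2/4}\,d\theta\ll L$ and the tail term is $C_0/T\ll L$, so $\Delta\ll L\ll\sum_i|v_i|^3$, with the implied constant depending only on $\E|\xi|^3$; this settles the real case.

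For $\F=\C$, write $\xi_j=\tfrac1{\sqrt2}(\xi_{j,1}+\Bi\,\xi_{j,2})$ with $\xi_{j,1},\xi_{j,2}$ iid real normalized, so $S=\tfrac1{\sqrt2}(X+\Bi Y)$ with $X=\sum_j v_j\xi_{j,1}$ and $Y=\sum_j v_j\xi_{j,2}$ \emph{independent} real normalized sums. Then $\{|S|\le t\}=\{X^2+Y^2\le 2t^2\}$, and likewise $\P(|\Bg_\C|\le t)=\P(\Bg_{1,\R}^2+\Bg_{2,\R}^2\le 2t^2)$, so the task becomes comparing $\P\big((X,Y)\in D\big)$ with $\P\big((\Bg_{1,\R},\Bg_{2,\R})\in D\big)$ for the disc $D$ of radius $\sqrt2\,t$. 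The coordinate laws factor, so the bivariate characteristic function is $\varphi_X(\theta_1)\varphi_Y(\theta_2)$, estimated exactly as above; feeding this into the two-dimensional Ess\'een smoothing inequality for convex sets (the disc is convex and the Gaussian surface measure of $\partial D$ is bounded) gives error $O(L)=O(\sum_j|v_j|^3)$, the fixed dimension $2$ contributing only an absolute constant. Alternatively one may simply cite a multidimensional Berry-Ess\'een bound for convex sets.

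The hard part is the complex case: approximating $D$ by $O(1/\delta)$ axis-parallel rectangles loses a factor $1/\delta$ against the one-dimensional Kolmogorov bound and gains only $\delta$ from the width of the error band, giving the suboptimal rate $O(\sqrt L)$; obtaining the sharp $O(\sum_j|v_j|^3)$ genuinely requires two-dimensional input — the convex-set smoothing inequality or a quantitative multivariate CLT — which exploits the smoothness of $\partial D$. A secondary, purely bookkeeping point is that $S$ need not have a density, so in the real case one must pass from $\Delta$ to $\P(|S|\le t)$ using one-sided limits of the c.d.f.\ together with continuity of the Gaussian c.d.f.
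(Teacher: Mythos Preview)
The paper does not prove this lemma at all: it is stated as the classical Berry--Ess\'een theorem, explicitly introduced only to ``warm the reader up'' before the frame version (Lemma~\ref{lemma:BE}, cited from \cite{TVleast}). So there is nothing to compare against.

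Your sketch is the standard Fourier-analytic proof and is correct. The real case is textbook: Ess\'een's smoothing lemma plus the third-order expansion of $\varphi_\xi$ gives the Kolmogorov bound $O(L)$, and $\P(|S|\le t)$ differs from $\P(|\Bg_\R|\le t)$ by at most $2\Delta$. For the complex case your reduction is right (the hypothesis that the $v_j$ are real is what makes $X$ and $Y$ independent), and you correctly flag the only nontrivial point: a rectangle approximation of the disc loses a square root, so to get the sharp rate $O(\sum_i|v_i|^3)$ one must invoke a genuine two-dimensional input, e.g.\ the Berry--Ess\'een bound for convex sets (Bhattacharya--Rao, Bentkus) applied to the independent summands $(v_j\xi_{j,1},v_j\xi_{j,2})$, whose Lyapunov ratio is $\asymp \E|\xi|^3\sum_j|v_j|^3$. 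With that citation in place the argument is complete.
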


\begin{lemma}[Berry-Ess\'een theorem for frames]\cite[Proposition D.2]{TVleast}\label{lemma:BE} Let $1\le k\le l$, and let $\xi$ be an $\F$-normalized and have finite third moment. Let $v_1,\dots,v_l\in \F^k$ be a normalized tight frame for $\F^k$, in other words

$$v_1v_1^\ast + \dots + v_n v_n^\ast =I_k,$$

where $I_k$ is the identity matrix on $\F^k$. Let $S\in \F^k$ denote the random variable 

$$S= \xi_1v_1+\dots+\xi_n v_n,$$

where $\xi_1,\dots,\xi_n$ are iid copies of $\xi$.  Similarly, let $G:=(\Bg_1,\dots,\Bg_k)\in \F^k$ be formed from $k$ iid copies of the standard gaussian random variable $\Bg_\F$. Then for any measurable $\Omega \subset F^k$ and for any $\eps= \eps(k,n) >0$ we have

$$\P\Big(G \in \Omega/ \partial_\eps \Omega\Big) - O\Big(k^{5/2} \eps^{-3} \max_j \|v_j\|_\infty \Big) \le P(S\in \Omega) \le \P\Big(G \in \Omega \cup \partial_\eps \Omega\Big) +O\Big(k^{5/2} \eps^{-3} \max_j \|v_j\|_\infty \Big),$$

where $\partial_\eps \Omega$ is the collection of $\Bx\in \F^k$ such that $\dist_{\|.\|_\infty}(\Bx,\partial \Omega)\le \eps$.
\end{lemma}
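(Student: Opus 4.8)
This statement is quoted verbatim from \cite[Proposition D.2]{TVleast}, so I will only sketch the shape of the argument that proves it. The natural route is a Lindeberg replacement, since the tight‑frame identity $\sum_j v_jv_j^\ast=I_k$ is exactly what forces the covariance of $S$ to agree with that of the Gaussian model. I would begin with the moment bookkeeping. Because $\xi$ is $\F$‑normalized, $\E\xi=0$, $\E|\xi|^2=1$, and in the complex case also $\E\xi^2=0$ (the real and imaginary parts being iid and symmetric about $0$); hence $\E S=0$ and $\E[SS^\ast]=\sum_{i,j}\E[\xi_i\overline{\xi_j}]\,v_iv_j^\ast=\sum_j v_jv_j^\ast=I_k$, so $S$ and $G=\sum_j\Bg_j v_j$ have identical first and second (and pseudo‑second) moments. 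Representing $G$ itself in the frame $\{v_j\}$ is what permits a coordinate‑by‑coordinate swap.

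The second step is to pass from sets to smooth test functions. Fix a smooth probability density $\rho$ supported in the unit $\|\cdot\|_\infty$‑ball, put $\rho_\eps(x)=\eps^{-k}\rho(x/\eps)$, and set $\varphi=\oindicator{\Omega}\ast\rho_\eps$. Then, up to a harmless rescaling of $\eps$, $\oindicator{\Omega\setminus\partial_\eps\Omega}\le\varphi\le\oindicator{\Omega\cup\partial_\eps\Omega}$, so that $\P(S\in\Omega)\le\E\varphi(S)$ while $\E\varphi(G)\le\P(G\in\Omega\cup\partial_\eps\Omega)$, and symmetrically for the reverse inequalities; it therefore suffices to estimate $|\E\varphi(S)-\E\varphi(G)|$. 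Since $\rho$ is a fixed bump, $\varphi$ obeys third‑order derivative bounds of size $\eps^{-3}$ (evaluated in the directions that will turn out to be the frame vectors $v_j$), and this is the sole source of the $\eps^{-3}$ and of the polynomial‑in‑$k$ factor in the final estimate.

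Now run the swap. With $S^{(m)}=\sum_{j\le m}\Bg_j v_j+\sum_{j>m}\xi_j v_j$ one has $|\E\varphi(S)-\E\varphi(G)|\le\sum_{m=1}^n|\E\varphi(S^{(m-1)})-\E\varphi(S^{(m)})|$, and for each $m$ a third‑order Taylor expansion of $\varphi$ about $W_m:=S^{(m)}-\xi_m v_m$ (which is independent of both $\xi_m$ and $\Bg_m$) kills the zeroth‑, first‑, and second‑order contributions by the moment matching established above, leaving a remainder controlled by $\eps^{-3}\,\big(\sum_i|(v_m)_i|^3\big)(\E|\xi|^3+\E|\Bg_\F|^3)$ once one exploits the product structure of $\rho_\eps$ (the crude bound $\eps^{-3}\|v_m\|_1^3$ also works but is wasteful). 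Summing over $m$ and using $\sum_m\|v_m\|_2^2=\tr I_k=k$ together with $\sum_i|(v_m)_i|^3\le\|v_m\|_\infty\|v_m\|_2^2$ produces a bound of the shape $O(\mathrm{poly}(k)\,\eps^{-3}\max_j\|v_j\|_\infty)$; the careful accounting in \cite{TVleast} pins the exponent at $k^{5/2}$.

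The main obstacle is precisely this dimensional bookkeeping. The textbook multidimensional Berry--Ess\'een theorem is stated for Euclidean balls or for convex sets, whereas here $\partial_\eps\Omega$ is the $\|\cdot\|_\infty$‑neighbourhood of an \emph{arbitrary} measurable $\Omega$, which forces the product‑type mollifier and a delicate tracking of how the $\eps^{-3}$, the powers of $k$, and the factor $\max_j\|v_j\|_\infty$ combine in the telescoped Taylor remainder. A secondary technical point, important only in the complex case, is that one needs $\E\xi^2=0$ and not merely $\E|\xi|^2=1$ for the second‑order Lindeberg terms to cancel — this is exactly what the definition of a normalized complex random variable supplies.
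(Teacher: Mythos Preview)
The paper does not prove this lemma at all; it is stated and cited from \cite[Proposition D.2]{TVleast} as a black box, so there is no in-paper argument to compare against. Your Lindeberg-swap sketch with a mollified indicator is exactly the method used in the original reference, and your identification of the moment matching (including $\E\xi^2=0$ in the complex case) and the source of the $\eps^{-3}$ and $k^{5/2}$ factors is accurate.
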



\section{Treatment for the largest coordinate: proof of \eqref{eqn:infty:1} and Theorem \ref{cor:eigenvectors}}\label{section:infty}

\subsection{Proof of \eqref{eqn:infty:1}}\label{subsection:zero} By a union bound, it suffices to show that for sufficiently large $C$

\begin{equation}\label{eqn:x1}
\P(|x_1| \ll \sqrt{\frac{m}{n}})=1-O\Big(n\exp(-\frac{m}{C})\Big).
\end{equation}

Let $\col_i$, $1\le i \le n$ be the columns of $A$. Because $\sum_{i=2}^n |x_i|^2 \le 1$, among the $(n-1)/m$ subset sums $|x_2|^2+\dots+|x_m|^2, |x_{m+1}|^2+\dots + |x_{2m-1}|^2,\dots, |x_{n-m+2}|^2+\dots+|x_n|^2$, there is a subset sum which is smaller than $m/n$. With a loss of a factor $n/m$ in probability, without loss of generality we will assume that

$$|x_2|^2+\dots+|x_m|^2 \le \frac{m}{n-1}.$$  

Let $H$ be the subspace generated by $c_j, j \ge m+1$.  Let $P_H$ be the
orthogonal  projection from $\F^{n-1}$ onto  $H^\perp$. We view  $P_H$ as a Hermitian matrix of size $(n-1) \times (n-1)$ satisfying  $P_H^2=P_H$. It is known (see for instance \cite{RV-LO,TVleast, BVW}) that with probability 
$1-\exp(-cn)$ we have $\dim(H^\perp)=m-1$, which implies $\tr(P_H)=m-1$.

Recall that by definition,

\begin{equation}\label{eqn:starting}
x_1 \col_1+ x_{2} \col_{2} +\dots +x_{m} \col_{m} + \sum_{i\ge m+1} x_i \col_i=0.
\end{equation}

Applying $P_H$, we have 

$$x_1 P_H \col_1  =  - P_H ( x_2 \col_{2} +\dots +x_m \col_m),$$ 

which implies 
 
 \begin{equation}\label{eqn:S'}
 |x_1|^2 \| P_H \col_1 \|_2 ^2  =  \| \sum_{j=2}^{m} x_j P_H \col_{j} \|_2^2  =  \sum_{ 2\le j_1 \le j_2 \le m } x_{j_1} \bar{x}_{j_2} \col_{j_1}^T P_H \col_{j_2} := \|Q \Bx'\|_2^2, 
 \end{equation}
 where $\Bx'=(x_2,\dots,x_m)$ and $Q \Bx':= \sum_{j=2}^{m} x_j P_H \col_{j}$. We remark that the $x_i$ here are not deterministic but depend on the column vectors $\col_i$. 
 
 As $Q \Bx'$ is linear, and as $|x_2|^2+\dots+|x_m|^2 \le m/(n-1)$, we have 
 
 $$\|Q \Bx'\|_2 \le \sup_{\By \in \F^{m-1}, \|\By\|=1} \|Q\By\|_2 \sqrt{\frac{m}{n-1}}.$$
 
 Thus
 
\begin{equation}\label{eqn:S}
 |x_1|^2 \| P_H \col_1 \|_2 ^2  \le  \sup_{\By \in \F^{m-1}, \|\By\|=1} \| Q \By\|_2^2 \frac{m}{n-1}.
 \end{equation}

 We are going to estimate the operator norm $\|Q\|_2$ basing the randomness of $\col_{j},2\le j\le m$.
 
 \begin{lemma}\label{lemma:Q:norm} There exists a sufficiently large constant $C$ such that
 $$\P_{\Bc_2,\dots,\Bc_m}( \|Q\|_2^2 \ge C m) =O( \exp( -2(m-1))) .$$
 \end{lemma}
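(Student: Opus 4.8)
The plan is to bound the operator norm of the matrix $Q$, whose columns are the projected vectors $P_H\col_j$ for $2\le j\le m$. Note first that $Q$ is an $(n-1)\times(m-1)$ matrix and it is natural to pass to the square matrix $Q^\ast Q$, which is $(m-1)\times(m-1)$ with entries $(Q^\ast Q)_{jk}=\col_j^\ast P_H\col_k$ (here $P_H^2=P_H$ and $P_H$ is Hermitian). Then $\|Q\|_2^2=\|Q^\ast Q\|_2$, and since $Q^\ast Q$ has size $m-1$, we have the crude but effective bound $\|Q^\ast Q\|_2\le \sum_{j,k}|(Q^\ast Q)_{jk}|\le (m-1)\max_{j,k}|\col_j^\ast P_H\col_k|$, or alternatively $\|Q^\ast Q\|_2\le \tr(Q^\ast Q)+\sum_{j\ne k}|(Q^\ast Q)_{jk}|$. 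The diagonal entries are $\|P_H\col_j\|_2^2$, which by Lemma \ref{thm:distance}(1) concentrate around $\dim H^\perp=m-1$ — so each is $O(m)$ except with probability $\exp(-\Omega(m^2/K_0^4))\cdot(\text{const})$ after conditioning on $\dim H^\perp=m-1$, and summing $m-2$ of them gives $O(m^2)$ with the same kind of probability. Wait — we want $\|Q\|_2^2=O(m)$, not $O(m^2)$, so the naive triangle inequality on all $m^2$ entries is too lossy by a factor of $m$; I must be more careful.

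The right approach is to use the operator-norm structure directly. Condition on $H$ (equivalently on the columns $\col_{m+1},\dots,\col_n$), on the event that $\dim H^\perp=m-1$, which costs only $\exp(-cn)$. Then the $\col_j$ for $2\le j\le m$ are independent of $H$, and $Q\By=\sum_{j=2}^m y_j P_H\col_j = P_H B\By$ where $B$ is the $(n-1)\times(m-1)$ matrix with columns $\col_2,\dots,\col_m$. Hence $\|Q\|_2\le\|P_H\|_2\|B\|_2=\|B\|_2$, and $B$ is simply a random matrix with iid normalized subgaussian entries of dimensions $(n-1)\times(m-1)$ with $m-1\le n-1$. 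Its operator norm is $O(\sqrt{n})$ with overwhelming probability by standard results — but that gives $\|Q\|_2^2=O(n)$, which is far too weak: we need $O(m)$. So restricting to $\By$ of norm $1$ in all of $\F^{m-1}$ without using the projection $P_H$ loses everything. The point of $P_H$ must be used: $P_H$ has rank $m-1$, so $Q\By$ lives in an $(m-1)$-dimensional space, and we are really asking for the operator norm of $P_H$ restricted to the column span of $B$. The cleanest route: write $\|Q\|_2^2=\|Q^\ast Q\|_2=\|B^\ast P_H B\|_2$. Now $B^\ast P_H B$ is an $(m-1)\times(m-1)$ matrix; apply the Hanson–Wright inequality (Lemma \ref{lemma:HW}) to control its entries, but crucially combine this with an $\eps$-net argument over the unit sphere of $\F^{m-1}$ to get the operator norm rather than entrywise control. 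For a fixed unit $\By\in\F^{m-1}$, $\By^\ast B^\ast P_H B\By = \Bx^\ast M\Bx$ where $\Bx=(\col_2,\dots,\col_m)$ is a vector of $(n-1)(m-1)$ iid entries and $M$ is a block matrix built from $P_H$ scaled by the outer products $y_jy_k I_{n-1}$; one computes $\|M\|_{HS}^2 = \sum_{j,k}|y_j|^2|y_k|^2\|P_H\|_{HS}^2 = (m-1)$ (since $\sum|y_j|^2=1$ and $\|P_H\|_{HS}^2=\tr P_H=m-1$) wait let me recompute: actually $\|M\|_{HS}^2=\sum_{j,k}|y_j|^2|y_k|^2\,\tr(P_H^\ast P_H)=(m-1)$, and $\E[\Bx^\ast M\Bx]=\tr M=\tr P_H\sum_j|y_j|^2=m-1$. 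So Lemma \ref{lemma:HW} gives $\P(|\By^\ast B^\ast P_H B\By|>Cm)\le \exp(-c't)$-type bounds with $t\sim m/\sqrt{m-1}\sim\sqrt m$ in the subgaussian-quadratic regime, i.e. probability $\le\exp(-c''m/K_0^2)$.

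The final step is the net. Take a $1/2$-net $\CN$ of the unit sphere of $\F^{m-1}$ of size at most $5^{2(m-1)}$ (the factor $2$ in the exponent accounts for the complex case). A standard argument gives $\|B^\ast P_H B\|_2\le 2\max_{\By\in\CN}|\By^\ast B^\ast P_H B\By|$ since $B^\ast P_H B$ is Hermitian PSD. Union-bounding over $\CN$: $\P(\|Q\|_2^2>4Cm)\le 5^{2(m-1)}\exp(-c''m/K_0^2)$, and choosing $C$ large enough (depending on $K_0$) makes the exponent beat $2(m-1)$, yielding the claimed $O(\exp(-2(m-1)))$. I would also fold in the small cost $\exp(-cn)\le\exp(-2(m-1))$ for the event $\dim H^\perp=m-1$. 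The main obstacle — and the reason the naive bounds fail — is extracting the operator norm at the correct scale $O(m)$ rather than $O(m^2)$ (from entrywise triangle inequality) or $O(n)$ (from ignoring the rank of $P_H$); the net-plus-Hanson–Wright combination is exactly what is needed, and the bookkeeping is making sure the Hanson–Wright deviation parameter lands in the sub-gaussian tail regime $\exp(-c\,m/K_0^2)$, which dominates the net cardinality $\exp(O(m))$ only because we get to choose the implicit constant $C$ freely.
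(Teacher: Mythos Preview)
Your approach --- fix a unit $\By$, express $\|Q\By\|_2^2$ as a quadratic form $\Bx^\ast M\Bx$ in the concatenated vector $\Bx=(\col_2,\dots,\col_m)$, apply Hanson--Wright, then union over a net --- is exactly what the paper does (it isolates the fixed-$\By$ estimate as Lemma~\ref{lemma:individual} and then runs the standard net argument). However, your Hanson--Wright step has a gap: you never compute $\|M\|_2$. The block matrix $M$ has $(j,k)$-block $y_j\bar y_k P_H$ (not ``$y_jy_k I_{n-1}$'' as you wrote), i.e.\ $M=\By\By^\ast\otimes P_H$, so $\|M\|_2=\|\By\By^\ast\|_2\,\|P_H\|_2=1$. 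This is not optional: if you only invoke the ``in particular'' form of Lemma~\ref{lemma:HW} with normalized deviation $t\sim (C-1)m/\|M\|_{HS}\sim\sqrt m$, the subexponential term contributes $\exp(-ct/K_0^2)=\exp(-\Theta(\sqrt m))$, which cannot beat a net of cardinality $e^{\Theta(m)}$ no matter how large $C$ is. One must use the first display of Lemma~\ref{lemma:HW} together with $\|M\|_2=1$, so that with raw deviation $t'=(C-1)m$ both terms are $\exp(-\Theta(m))$; this is precisely what the paper does.

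A second, smaller slip: for a Hermitian $A$ and a $\tfrac12$-net $\CN$, the inequality $\|A\|_2\le 2\max_{\By\in\CN}|\By^\ast A\By|$ fails (the standard comparison loses $2\eps\|A\|_2$, and $2\cdot\tfrac12=1$ makes the bound vacuous). Either pass to a $\tfrac14$-net, or --- as the paper does --- bound $\|Q\By\|_2$ directly and use $\|Q\|_2\le 2\max_{\By\in\CN}\|Q\By\|_2$, which \emph{is} valid for a $\tfrac12$-net.
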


Assume Lemma \ref{lemma:Q:norm} for the moment, we can complete the proof of \eqref{eqn:x1} as follows. First, by Lemma \ref{thm:distance},  $\|P_H \col_1\|_2^2 \ge m/2$ with probability at least $1-\exp(-\frac{m}{4K_0^4})$.  We then deduce from \eqref{eqn:S} and from Lemma \ref{lemma:Q:norm} that  

$$\P(|x_1|^2 \gg \frac{m}{n}) \le O\Big( \frac{n}{m}\exp(-\frac{m-1}{4K_0^4}) + \exp(-2(m-1))\Big)  ,$$

completing the proof.

To prove Lemma \ref{lemma:Q:norm}, we first estimate $\|Q \By\|_2$ for any fixed $\By\in S^{m-2}$. We will show

\begin{lemma}\label{lemma:individual} There exists a sufficiently large constant $C$ such that for any fixed $\By\in \F^{m-1}$ with $\|\By\|_2=1$, 
 $$\P_{\Bc_2,\dots,\Bc_m}( \|Q \By \|_2^2 \ge C m) = O( \exp( -4 (m-1) )).$$
\end{lemma}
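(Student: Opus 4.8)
The plan is to recognize $\|Q\By\|_2^2$ as a quadratic form in a random vector with iid sub‑gaussian entries and to apply the Hanson--Wright inequality. Fix $\By=(y_2,\dots,y_m)\in\F^{m-1}$ with $\|\By\|_2=1$ and set $\Bw:=\sum_{j=2}^{m}y_j\col_j\in\F^{n-1}$. Since $Q\By=\sum_{j=2}^{m}y_jP_H\col_j=P_H\Bw$ and $P_H=P_H^\ast=P_H^2$, we have $\|Q\By\|_2^2=\Bw^\ast P_H\Bw$. All probabilities in the statement are over $\col_2,\dots,\col_m$, so $H$ (spanned by $\col_{m+1},\dots,\col_n$) and hence $P_H$ are deterministic and independent of $\Bw$; moreover, since $H$ is spanned by $n-m$ vectors, $\tr(P_H)=\dim H^\perp\le m-1$, whence $\|P_H\|_{HS}^2=\tr(P_H)\le m-1$ and $\|P_H\|_2\le1$ for every realization of $\col_{m+1},\dots,\col_n$.

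Next I would record the properties of $\Bw$: its coordinates $w_i=\sum_{j=2}^{m}y_ja_{ij}$, $1\le i\le n-1$, are independent and identically distributed, each a linear combination of $m-1$ iid copies of $\xi$ with coefficient vector of unit $\ell^2$‑norm, so $\E w_i=0$, $\E|w_i|^2=1$, and $w_i$ is sub‑gaussian with parameter $K_0'=O(K_0)$. In particular $\E\,\Bw^\ast P_H\Bw=\tr(P_H)\le m-1$. Applying the Hanson--Wright inequality (Lemma \ref{lemma:HW}, whose proof from \cite{RV-HW} only uses independence, mean zero and a sub‑gaussian bound on the entries) with $A=P_H$ would give, for every $s>0$,
$$\P\!\left(\bigl|\Bw^\ast P_H\Bw-\tr(P_H)\bigr|>s\right)\le 2\exp\!\Big(-c\min\bigl(\tfrac{s^2}{K_0'^4(m-1)},\ \tfrac{s}{K_0'^2}\bigr)\Big).$$
On the event $\{\|Q\By\|_2^2\ge Cm\}$ one has $\Bw^\ast P_H\Bw-\tr(P_H)\ge Cm-(m-1)\ge(C-1)(m-1)$, so taking $s=(C-1)(m-1)$ and choosing $C$ large enough in terms of $K_0$ (so that both $c(C-1)/K_0'^2\ge4$ and $c(C-1)^2/K_0'^4\ge4$) makes the right‑hand side at most $2\exp(-4(m-1))$. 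Since this bound does not depend on $\col_{m+1},\dots,\col_n$, it yields the lemma. (Alternatively, one could invoke Lemma \ref{thm:distance}(1) for $\Bw$ and the subspace $H$ to get concentration of $\|P_H\Bw\|_2$ around $\sqrt{\tr P_H}$, with the same conclusion.)

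I do not expect a genuine obstacle here: the entire content is the sub‑exponential upper tail in Hanson--Wright. The points that need care are (i) the independence bookkeeping that keeps $P_H$ frozen while $\Bw$ varies, (ii) verifying that a unit‑$\ell^2$‑coefficient linear combination of iid normalized sub‑gaussians is again mean‑zero, variance‑one and sub‑gaussian with parameter controlled by $K_0$, and (iii) tracking how large the absolute constant $C$ must be in terms of $K_0$. The generous exponent $4(m-1)$ (rather than $2(m-1)$) is chosen so that, in the subsequent passage to Lemma \ref{lemma:Q:norm}, a union bound over an $\eps$‑net of $S^{m-2}$ of size $e^{O(m)}$ can be absorbed.
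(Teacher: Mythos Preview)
Your argument is correct and is essentially the paper's proof: both express $\|Q\By\|_2^2$ as a quadratic form and invoke Hanson--Wright (Lemma~\ref{lemma:HW}) to get the sub-exponential tail. The only difference is bookkeeping: the paper concatenates $\col_2,\dots,\col_m$ into a single vector $\col\in\F^{(n-1)(m-1)}$ whose entries are iid copies of $\xi$, and applies Lemma~\ref{lemma:HW} with the tensor-product matrix $P=(\By\By^\ast)\otimes P_H$ (for which $\tr P=\|P\|_{HS}^2=m-1$ and $\|P\|_2=1$), whereas you first form $\Bw=\sum_j y_j\col_j\in\F^{n-1}$ and apply Hanson--Wright directly with $A=P_H$. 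Your packaging is a little shorter, but note that Lemma~\ref{lemma:HW} as stated in the paper asks for iid copies of $\xi$, so you are (correctly) appealing to the more general independent-sub-gaussian version in \cite{RV-HW}; the paper's concatenation avoids this by keeping the entries literally iid $\xi$.
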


The deduction of Lemma \ref{lemma:Q:norm} from Lemma \ref{lemma:individual} is standard, we present it here for the sake of completeness.
\begin{proof}(of Lemma \ref{lemma:Q:norm})
Let $\CN$ be a $(1/2)$-net for the set of unit vectors in $\F^{m-1}$. As is well known,  one can assume that $|\CN| \le 4^m$. Applying Lemma \ref{lemma:individual},

$$\P\Big (\exists \By\in \CN,  \|Q \By \|_2^2 \ge 2 m \Big)= O\Big(|\CN|   \exp( -4 (m-1) ) \Big) = O\Big (\exp( -2(m-1)) \Big).$$

Now for any unit vector $\By'$, there exists $\By\in \CN$ such that $\|\By'-\By\|_2 \le 1/2$, and thus by the triangle inequality

$$\|Q \By'\|_2 \le \|Q \By\|_2 + \|Q(\By-\By')\|_2 \le \|Q \By \|_2 + \|Q\|_2/2.$$

This implies that $\|Q\|_2 \le \sup_{\By \in \CN} \|Q \By\|_2 + \|Q\|_2/2$, and hence

$$\|Q\|_2 \le 2  \sup_{\By \in \CN} \|Q \By\|_2.$$
\end{proof}

\begin{proof}(of lemma \ref{lemma:individual})  
 Let $\col$ be the concatenation  of $(\col_{i_1},\dots, \col_{i_{m-1}})$, then $\|Q \By\|_2^2$ can be written as a bilinear form $S =  \col^\ast P \col$ where $P$ is the tensor product of $\By \By^\ast$ and $P_H$, with $\By=(y_1, \dots, y_{m-1})$. 
 By construction,  $P$ consists  of $(m-1)^2$ blocks where the $kl$-th block is  the matrix $y_{k}\bar{y}_{l} P_H$. It thus follows that 
 
 $$\| P \|_2 = \| \By \|_2^2 =1 .$$ 
 
Applying Lemma \ref{lemma:HW} to $S= \Bc^\ast P \Bc$, we have 
  
 $$\P ( |S - \tr P | \ge t ) \le O \Big(\exp( - c\frac{t^2} {K_0^4 \| P\|_{HS}^2 } )  + \exp ( -c\frac{t} { K_0^2\| P \|_2 } )\Big). $$
 
 It is easy to show that 
 
 $$\tr P = (m-1) \sum_{j=0}^{m-1} |y_j|^2 = m-1. $$ 
 
 Taking $t = 4  (c^{-1}+1)K_0^2 (m-1): = \alpha (m-1)$, we obtain 
 
 $$ \P \Big( S \ge (\alpha+1)(m-1) \Big) \le O \Big(\exp ( - 16\frac{ (m-1)^2 } {\| P \|_{HS}^2 } ) + \exp (-4 (m-1)) \Big). $$
 
 To this end, by properties of a tensor product, 
 
 $$\| P \| _{HS}^2=  \| \By \By^T \|_{HS}^2 \| P_H \|_{HS}^2 = m-1 , $$ 
 
 which implies that 
 
 \begin{equation}\label{eqn:S:1} 
 \P \Big( S \ge (\alpha+1) (m-1) \Big) =O\Big( \exp( -4(m-1)) \Big). 
 \end{equation}
 
 \end{proof}

 
 We now turn to the eigenvectors.
 
 \subsection{Proof of Theorem \ref{cor:eigenvectors}} We will be working with the perturbed matrix $M_n-\lambda_0$ where $(M_n-\lambda_0)_{ii} = m_{ii}-\lambda_0, 1\le i\le n$ and $(M_n-\lambda_0)_{ij} =m_{ij}, i\neq j$. By a standard net argument, it suffices to show the following

\begin{theorem}\label{theorem:eiginfty} For any fixed $\lambda_0$ with $|\lambda_0| \le O(1)$, the following holds with overwhelming probability with respect to $M_n$: if $\|(M_n-\lambda_0)\Bx\|_2 \le 1/n$ then $\Bx$ satisfies \eqref{eqn:infty:1}.  
\end{theorem}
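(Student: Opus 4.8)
The plan is to mimic the argument for \eqref{eqn:infty:1} almost verbatim, absorbing the two new features: the matrix has an extra diagonal shift $-\lambda_0$, and $\Bx$ is only an approximate null vector (the defect $\|(M_n-\lambda_0)\Bx\|_2 \le 1/n$ rather than exactly $0$). Write $B = M_n - \lambda_0$ and let $\col_1,\dots,\col_n$ be its columns; note that $\col_i$ for $i\neq$ a fixed coordinate still has iid normalized subgaussian entries off one position, so Lemma \ref{thm:distance} and Lemma \ref{lemma:HW} continue to apply to the relevant subvectors up to harmless $O(1)$ perturbations (one entry is shifted by the bounded constant $\lambda_0$, which affects norms and traces only by $O(1)$ and $O(\sqrt{m})$ respectively, negligible against the scales $m$ we work at). As before, by a union bound over the $n$ coordinates and over the $(n-1)/m$ block decompositions of $\sum_{i\neq 1}|x_i|^2 \le 1$, it suffices to bound $\P(|x_1|^2 \gg m/n)$ under the assumption $|x_2|^2+\dots+|x_m|^2 \le m/(n-1)$.

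Next I would rewrite the approximate eigenvector relation. From $B\Bx = \Br$ with $\|\Br\|_2 \le 1/n$ we get
$$x_1\col_1 + x_2\col_2 + \dots + x_m\col_m + \sum_{i\ge m+1} x_i\col_i = \Br.$$
Let $H = \operatorname{span}(\col_{m+1},\dots,\col_n)$ and $P_H$ the projection onto $H^\perp$; with probability $1-\exp(-cn)$ we have $\tr(P_H) = m-1$ exactly as before (the shift does not affect the generic rank statements of \cite{RV-LO,BVW,TVleast}). Applying $P_H$ kills the tail sum and yields
$$x_1 P_H\col_1 = P_H\Br - P_H\Big(\sum_{j=2}^m x_j\col_j\Big),$$
so by the triangle inequality
$$|x_1|\,\|P_H\col_1\|_2 \le \|\Br\|_2 + \Big\|\sum_{j=2}^m x_j P_H\col_j\Big\|_2 \le \frac1n + \|Q\|_2\sqrt{\tfrac{m}{n-1}},$$
with $Q$ the same operator as in \eqref{eqn:S'}. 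Lemma \ref{lemma:Q:norm} gives $\|Q\|_2^2 \le Cm$ off an event of probability $O(\exp(-2(m-1)))$, and Lemma \ref{thm:distance} gives $\|P_H\col_1\|_2^2 \ge m/2$ off probability $\exp(-m/(4K_0^4))$; combining, $|x_1|^2 \ll (1/n^2 + m^2/n)/m \ll m/n$ on the intersection (the $1/n^2$ term is dominated since $m \gg \log n \gg 1/n$). Re-inserting the lost union-bound factor $n/m$ and the $n$ over coordinates reproduces exactly the bound \eqref{eqn:infty:1}, i.e. $\P(\|\Bx\|_\infty \ge \sqrt{m/n}) \le Cn^2\exp(-m/C)$.

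The main obstacle — really the only thing needing care — is verifying that the subgaussian concentration lemmas survive the diagonal perturbation: in Lemma \ref{lemma:individual} the matrix $P$ was the tensor product $\By\By^\ast \otimes P_H$ built from columns of $A$, and now $P_H$ is the projection associated to columns of $B$, while the vector $\col$ fed into the Hanson-Wright bound has one coordinate per block shifted by $\lambda_0$. I would handle this by writing $\col = \col^{(0)} + \lambda_0\Be$ where $\col^{(0)}$ has genuinely iid normalized entries and $\Be$ is a fixed $O(1)$-norm vector, expand $S = \col^\ast P\col$ into the main term $(\col^{(0)})^\ast P\col^{(0)}$ (controlled by Lemma \ref{lemma:HW} exactly as written) plus cross terms $2\lambda_0\Re\big((\col^{(0)})^\ast P\Be\big)$ and $\lambda_0^2\Be^\ast P\Be$; the last is deterministically $O(m)$ since $\|P\|_2 = 1$ and $\|\Be\|_2^2 = O(m)$, and the cross term is a linear form in $\col^{(0)}$ with coefficient vector of norm $\le \|P\|_2\|\Be\|_2 = O(\sqrt m)$, hence subgaussian with deviation $O(\sqrt m \cdot t)$ — comfortably within the $O(m)$ budget after taking $t = O(\sqrt m)$. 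This shows all the estimates go through with the same exponents, completing the reduction.
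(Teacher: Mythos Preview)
Your proposal is correct and follows essentially the same route as the paper: project onto $H^\perp$, absorb the $1/n$ defect, and recover the analogue of Lemma~\ref{lemma:individual} by decomposing $\col = \col^{(0)} + \lambda_0\Be$ and expanding the quadratic form --- this is exactly the paper's Lemma~\ref{lemma:individual'} with its $S+S'+S''+S'''$ split. One small slip: $\Be$ (the concatenation of $m-1$ standard basis vectors) has norm $\sqrt{m-1}$, not $O(1)$ as you first write, but you correctly use $\|\Be\|_2^2=O(m)$ in the actual estimates; the paper obtains sharper constants ($|S'''|\le|\lambda_0|^2$ and coefficient norm $\le 1$ for the cross term, via orthogonality of the $\Bf_j$), but your cruder bounds are already sufficient since the target threshold is $Cm$.
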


Equivalently, we show that for any unit vector $\Bx \in \F^n$ satisfying the condition of Theorem \ref{theorem:eiginfty}, then

\begin{equation}\label{eqn:x1'}
\P(|x_1| \ll \sqrt{\frac{m}{n}})=1-O\Big(n\exp(-\frac{m}{C})\Big).
\end{equation}

We will proceed as in Subsection \ref{subsection:zero} by assuming that $|x_2|^2+\dots+|x_m|^2 \le m/(n-1)$, where instead of $\eqref{eqn:starting}$ we have



\begin{equation}\label{eqn:starting'}
x_1 \col_1+ x_{2} \col_{2} +\dots +x_{m} \col_{m} + \sum_{i\ge m+1\}} x_i \col_i =\Br
\end{equation}

for some vector $\Br$ with norm $\|\Br\|_2 \le 1/n$, where $\col_i$ is the $i$-th column of the matrix $M_n-\lambda_0$. 

Projecting onto $H^\perp$, we obtain

$$ |x_1|^2 \| P_H \col_1 \|_2 ^2  \le  2\| \sum_{j=2}^{m} x_{j} P_H \col_{j} \|_2^2 + 2\|\Br\|_2^2 \le  2\sum_{ 2\le j_1 \le j_2 \le m } x_{j_1} \bar{x}_{j_2} \col_{j_1}^\ast P_H \col_{j_2}  + \frac{2}{n^2}. $$
 
 Note that here as $|\lambda_0|=O(1)$, Lemma \ref{thm:distance} is still effective, which yields  $\|P_H \col_1\|_2^2 \ge m/2$ with probability at least $1-\exp(-\frac{m}{4K_0^4})$.
 
To estimate the right hand side, set $Q (\Bx'): =\sum_{j=2}^{m} x_{j} P_H \col_{j} $.  Similarly to Lemma \ref{lemma:Q:norm}, we will establish
  
 \begin{lemma}\label{lemma:Q:norm'} There exists a sufficiently large constant $C$ such that
 $$\P_{\Bc_2,\dots,\Bc_m}( \|Q\|_2^2 \ge C m) =O( \exp( -2(m-1))) .$$
 \end{lemma}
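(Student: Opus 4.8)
The proof of Lemma \ref{lemma:Q:norm'} follows exactly the same scheme as the proof of Lemma \ref{lemma:Q:norm}: reduce the operator-norm bound to a bound for a single fixed unit vector via a $(1/2)$-net argument, and handle the fixed vector by writing $\|Q\By\|_2^2$ as a quadratic form and applying the Hanson-Wright inequality (Lemma \ref{lemma:HW}).

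\begin{proof}(of Lemma \ref{lemma:Q:norm'})
First I would establish the fixed-vector analogue: for any fixed $\By\in\F^{m-1}$ with $\|\By\|_2=1$, one has $\P_{\Bc_2,\dots,\Bc_m}(\|Q\By\|_2^2\ge Cm)=O(\exp(-4(m-1)))$ for a sufficiently large constant $C$. As in the proof of Lemma \ref{lemma:individual}, let $\col$ be the concatenation of $(\col_2,\dots,\col_m)$ (where now $\col_j$ is the $j$-th column of $M_n-\lambda_0$). Then $\|Q\By\|_2^2=\col^\ast P\col$, where $P$ is the tensor product of $\By\By^\ast$ with $P_H$; hence $\|P\|_2=\|\By\|_2^2=1$, $\|P\|_{HS}^2=\|\By\By^\ast\|_{HS}^2\|P_H\|_{HS}^2=m-1$, and $\tr P=(m-1)\|\By\|_2^2=m-1$. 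The only new point is that the entries of $M_n-\lambda_0$ are not $\xi$ but $\xi$ shifted by $-\lambda_0$ on the diagonal; since $|\lambda_0|=O(1)$, these entries are still sub-gaussian (with a slightly worse parameter), so Lemma \ref{lemma:HW} still applies to $\col^\ast P\col$, giving
$$\P\big(|\col^\ast P\col-\E\col^\ast P\col|\ge t\big)\le O\Big(\exp(-c\tfrac{t^2}{K_0^4\|P\|_{HS}^2})+\exp(-c\tfrac{t}{K_0^2\|P\|_2})\Big).$$
The mean $\E\col^\ast P\col$ differs from $\tr P$ only by a term of size $O(|\lambda_0|^2)\cdot\|P\|_2=O(1)$ coming from the shifted diagonal, which is negligible. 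Taking $t$ a sufficiently large multiple of $m-1$ (as in Lemma \ref{lemma:individual}) and using $\|P\|_{HS}^2=m-1$ to make the first exponential $\exp(-\Omega((m-1)))$ yields the claimed bound $O(\exp(-4(m-1)))$.

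Then I would deduce the operator-norm bound by the standard net argument, verbatim as in the proof of Lemma \ref{lemma:Q:norm}: take a $(1/2)$-net $\CN$ of the unit sphere of $\F^{m-1}$ with $|\CN|\le 4^m$, apply the fixed-vector estimate and a union bound to get $\P(\exists\By\in\CN,\ \|Q\By\|_2^2\ge 2Cm)=O(4^m\exp(-4(m-1)))=O(\exp(-2(m-1)))$, and then use $\|Q\|_2\le\sup_{\By\in\CN}\|Q\By\|_2+\|Q\|_2/2$ to conclude $\|Q\|_2\le 2\sup_{\By\in\CN}\|Q\By\|_2$ on this event. This gives $\P_{\Bc_2,\dots,\Bc_m}(\|Q\|_2^2\ge Cm)=O(\exp(-2(m-1)))$ after adjusting the constant $C$.
\end{proof}

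The only real point requiring attention — and the place where the argument genuinely departs from the unperturbed case — is tracking the effect of the shift by $\lambda_0$: one must check that the diagonal-shifted entries remain sub-gaussian with a controlled parameter (immediate since $|\lambda_0|=O(1)$) and that the correction to $\E\col^\ast P\col$ is $O(1)$ and hence absorbed. Everything else is a routine transcription of Lemmas \ref{lemma:individual} and \ref{lemma:Q:norm}. I do not anticipate any essential obstacle; the bookkeeping around the rank of $H^\perp$ (so that $\tr P_H=m-1$) and the concentration $\|P_H\col_1\|_2^2\ge m/2$ go through unchanged because $P_H$ is built from columns $\col_j$, $j\ge m+1$, of $M_n-\lambda_0$, and Lemma \ref{thm:distance} remains effective under the $O(1)$ shift.
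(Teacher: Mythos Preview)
Your overall scheme—net argument over $S^{m-2}$ plus a Hanson--Wright bound for each fixed $\By$—is exactly the paper's. The gap is in the fixed-vector step: Lemma~\ref{lemma:HW} (and the underlying Rudelson--Vershynin inequality) requires the components of the random vector to be independent and \emph{mean-zero} sub-gaussian; ``still sub-gaussian with a slightly worse parameter'' is not enough. The diagonal entries of your concatenated vector $\col$ have mean $-\lambda_0\neq 0$, so the lemma does not apply directly, and computing the shifted mean $\E\col^\ast P\col$ does not by itself restore concentration around it.

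The paper handles this by writing $\col_j=\col_j'-\lambda_0\Bf_j$ with $\col_j'$ genuinely iid and $\Bf_j$ a single-coordinate indicator, and expanding $\|Q\By\|_2^2=S+S'+S''+S'''$: the purely random part $S$ is treated by Hanson--Wright exactly as in Lemma~\ref{lemma:individual}; the cross terms $S',S''$ are \emph{linear} in $\col'$ and bounded by ordinary sub-gaussian tails (each has variance at most $|\lambda_0|^2$ because $\|P_H(\sum_j \bar y_j\Bf_j)\|_2\le 1$); and $S'''=|\lambda_0|^2\|P_H(\sum_j y_j\Bf_j)\|_2^2\le|\lambda_0|^2$ is deterministic. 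If you first center your $\col$, the centered vector is precisely the concatenation of the $\col_j'$, and you recover exactly this four-term decomposition—so your approach is salvageable, but the salvage \emph{is} the paper's argument. As a side remark, your justification ``$O(|\lambda_0|^2)\cdot\|P\|_2=O(1)$'' for the mean correction is not what $\|P\|_2$ alone gives (that route yields $\|P\|_2\|\mu\|_2^2=O(m)$); the $O(1)$ bound genuinely uses the tensor structure of $P$, as in the paper's bound for $S'''$—though for the final conclusion an $O(m)$ correction would also be harmless.
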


It is clear that \eqref{eqn:x1'} follows from Lemma \ref{lemma:Q:norm'}. Furthermore, similarly to our treatment in the previous subsection, for this lemma it suffices to show the following analog of Lemma \ref{lemma:individual} for any fixed $\By$.

\begin{lemma}\label{lemma:individual'} There exists a sufficiently large constant $C$ such that for any fixed $\By\in \F^{m-1}$ with $\|\By\|_2=1$, 
 $$\P_{\Bc_2,\dots,\Bc_m}( \|Q \By \|_2^2 \ge C m) = O( \exp( -4 (m-1) )).$$
\end{lemma}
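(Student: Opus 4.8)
The statement to prove is Lemma~\ref{lemma:individual'}: for fixed $\By \in \F^{m-1}$ with $\|\By\|_2 = 1$, the quantity $\|Q\By\|_2^2 = \|\sum_{j=2}^m y_{j-1} P_H \col_j\|_2^2$ is $O(m)$ except with probability $O(\exp(-4(m-1)))$, where now $\col_j$ is the $j$-th column of $M_n - \lambda_0$ and $P_H$ is the orthogonal projection onto the orthogonal complement of the span of $\col_{m+1}, \dots, \col_n$. The key point is that this is \emph{almost} identical to Lemma~\ref{lemma:individual}; the only difference is that the columns $\col_j$ of $M_n - \lambda_0$ are no longer iid copies of the normalized vector $\Bv$, because the $j$-th column has had $\lambda_0$ subtracted from its $j$-th entry. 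So the plan is to reduce to the Hanson--Wright setup of Lemma~\ref{lemma:individual} and carefully track the deterministic shift.

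\textbf{Steps.} First, I would write $\col_j = \tilde\col_j - \lambda_0 \Be_j$ for $2 \le j \le m$, where $\tilde\col_j$ is the $j$-th column of $M_n$ (genuinely iid normalized entries) and $\Be_j$ is the standard basis vector. Then $Q\By = \sum_{j=2}^m y_{j-1} P_H \tilde\col_j - \lambda_0 \sum_{j=2}^m y_{j-1} P_H \Be_j =: \tilde Q \By - \lambda_0 P_H \Bw$, where $\Bw = \sum_{j=2}^m y_{j-1}\Be_j$ is a fixed vector of norm $\|\By\|_2 = 1$. By the triangle inequality, $\|Q\By\|_2 \le \|\tilde Q\By\|_2 + |\lambda_0|\,\|P_H\Bw\|_2 \le \|\tilde Q\By\|_2 + |\lambda_0|$, since $P_H$ is a contraction. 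The term $\|\tilde Q\By\|_2^2$ is handled verbatim by the argument of Lemma~\ref{lemma:individual}: concatenate the $\tilde\col_j$ into a single vector $\col$ of iid normalized sub-gaussian entries, write $\|\tilde Q\By\|_2^2 = \col^\ast P \col$ with $P = (\By\By^\ast) \otimes P_H$, note $\|P\|_2 = 1$ and $\|P\|_{HS}^2 = \|\By\By^\ast\|_{HS}^2 \|P_H\|_{HS}^2 = m-1$ (using $\tr P_H = \dim H^\perp = m-1$, which holds off an event of probability $\exp(-cn)$), and apply Hanson--Wright (Lemma~\ref{lemma:HW}) with $t = \alpha(m-1)$ for a suitably large $\alpha$ depending on $c$ and $K_0$, exactly as before. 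This gives $\|\tilde Q\By\|_2^2 \le (\alpha+1)(m-1)$ except with probability $O(\exp(-4(m-1)))$. Combining with $|\lambda_0| = O(1)$ and choosing $C$ large enough (absorbing $(\sqrt{(\alpha+1)(m-1)} + O(1))^2 \le Cm$ for $m$ large), we conclude $\|Q\By\|_2^2 \le Cm$ off an event of probability $O(\exp(-4(m-1)))$.

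\textbf{Main obstacle.} There is really no serious obstacle here — the lemma is a routine perturbation of Lemma~\ref{lemma:individual}. The only points requiring a little care are: (i) one must condition on the columns $\col_{m+1}, \dots, \col_n$ so that $P_H$ (which depends on $\lambda_0$ through those columns as well) is a fixed Hermitian projection of the correct rank, and check that the rank statement $\tr P_H = m-1$ still holds with probability $1 - \exp(-cn)$ for the shifted matrix $M_n - \lambda_0$ (it does, by the same singularity-probability results, since a fixed deterministic shift does not affect these; alternatively one absorbs this into the conditioning); (ii) the Hanson--Wright bound must be applied to $\tilde\col$ conditionally on this event, which is fine since the entries of $\tilde\col$ (the off-$H$-columns) are independent of the conditioning; and (iii) one has to make sure the factor $|\lambda_0| = O(1)$ is genuinely bounded by an absolute constant, which is exactly the hypothesis $|\lambda_0| \le O(1)$. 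None of these is a genuine difficulty; the proof is essentially a transcription of the earlier one with the extra deterministic term carried along.
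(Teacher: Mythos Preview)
Your proof is correct and follows essentially the same approach as the paper: decompose $\col_j = \col_j' - \lambda_0 \Bf_j$ into an iid part plus a bounded deterministic shift, handle the iid part via the Hanson--Wright argument of Lemma~\ref{lemma:individual}, and bound the shift using $\|P_H\|_2\le 1$, $\|\By\|_2=1$, and $|\lambda_0|=O(1)$. Your use of the triangle inequality on the vector $Q\By$ is a bit more direct than the paper's expansion of $\|Q\By\|_2^2$ into four cross terms $S+S'+S''+S'''$ (where the paper bounds $S'$, $S''$ separately via sub-gaussian tails), but the key ideas and tools are identical.
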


It remains to prove Lemma \ref{lemma:individual'}. Write $\col_{j} = \col_{j}' -\lambda_0 \Bf_{j}$, where $\Bf_{j}$ is a $\{0,1\}$-vector with at most one non-zero entry and $\col_{j}'$ is a random vector of iid entries. Thus

  \begin{align*}\label{eqn:S'}
 \sum_{ 1\le i, j \le m-1 } y_{i} \bar{y}_{j} \col_{i}^\ast P_H \col_{j}  &=  \sum_{ 1\le i,j \le m-1 } y_i \bar{y}_j {\col_{i}'}^\ast P_H \col_{j}' \\
 &+ \lambda_0 \sum_{ 1\le i, j \le m-1 } y_i \bar{y}_j {\col_{i}'}^\ast P_H \Bf_{j} \\
 &+ \lambda_0 \sum_{ 1\le i, j \le m -1} y_{i} \bar{y}_{j} {\Bf_{i}}^\ast P_H \col_{j}'\\
 &+ |\lambda_0|^2 \sum_{ 1\le i,j \le m-1 } y_i \bar{y}_j {\Bf_{i}}^\ast P_H \Bf_{j} \\
 &:= S+ S'+S''+S'''.
  \end{align*}

For $S$, argue similarly as in the proof of Lemma \ref{lemma:individual}, we obtain the following analog of \eqref{eqn:S:1}
 
 $$ \P \Big( S \ge (\alpha+1)(m-1)  \Big) =O( \exp( -4(m-1)). $$
 
 
 
Next, we have
 
\begin{align*}
|S'|= |\lambda_0\sum_{ 1\le i,j \le m } y_{{i}} \bar{y}_{{j}} {\col_{{i}}'}^\ast P_H \Bf_{{j}}|&=|\lambda_0 (\sum_{ 1\le i  \le m -1} y_{{i}} {\col_{{i}}'}^\ast)( \sum_{1\le j \le m-1} y_{j}P_H \Bf_{{j}})|\\
&=| \lambda_0 (\sum_{ 1\le i  \le m-1 } y_{{i}} {\col_{{i}}'}^\ast ) P_H(\sum_{1\le j \le m-1} \bar{y}_{j} \Bf_{{j}})| ,
\end{align*}

Additionally, as $\|P_H\|_2 \le 1$ and $\|\By\|_2=1$, by the properties of $\Bf_i$ the vector $\Bz:= P_H(\sum_{1\le j \le m-1} \bar{y}_{j} \Bf_{{j}})$ has norm at most $\|\Bz\|_2 \le 1$. As such, the subgaussian random variable $(\sum_{ 1\le i  \le m-1 } y_{{i}} {\col_{{i}}'}^\ast)  \Bz$ has variance at most one, and hence

$$\P\Big( |(\sum_{ 1\le i  \le m-1 } y_{{i}} {\col_{{i}}'}^\ast ) \Bz | \ge m-1  \Big) =o\Big( \exp( -4(m-1)\Big). $$

We can argue similarly for $S''$ to obtain the same bound. Finally,  notice that 

$$|S'''| = |\lambda_0|^2 \|P_H( \sum_{1\le j \le m-1} y_{j} \Bf_{{j}})\|_2^2 \le |\lambda_0|^2.$$
  
Putting all the estimates together, we obtain Lemma \ref{lemma:individual'} as long as $|\lambda_0| =O(1)$.

\section{Treatment for the smallest coordinate: proof of \eqref{eqn:infty:2}}\label{section:non-gap}

Let $M$ be the random matrix of size $(n-1) \times (n-1)$ obtained from $A$ by deleting its first column. Set  $\Bx'=(x_2,\dots,x_n)$, we have

 $$ A \Bx = x_1 \col_1 + M \Bx' = 0. $$
 
 As it is known that with  probability at least $1-\exp(-cn)$ the matrix $M$ is invertible; in this case, we can write 
 
 $$x_1 M^{-1} \col_1 = - \Bx' .$$
 
Since
 
 $$ |x_1|^2  \| M^{-1} \col_1\| _2^2 = \| \Bx' \|_2 ^2 = 1 - |x_1| ^2, $$ we obtain 
 
 $$|x_1|^2  = \frac{1} {1 +  \| M^{-1} \col_1 \|_2^2 }  = \frac{1}{ 1 +\sum_{j=1} ^{n-1} \sigma_j^{-2} | \col_1^T \Bu_j | ^2 }, $$ where $\sigma_1\ge \dots \ge \sigma_{n-1}$ are the singular values of $M$ with corresponding left-singular vectors  $\Bu_1,\dots,\Bu_{n-1}$.



We now condition on $M$. By the sub-gaussian property of the entries,  we can easily show that there is a constant $C$ such that with  overwhelming probability (with respect to $\col_1$)

\begin{equation}\label{eqn:c&u}
| \col_1^T \Bu_1 |  \le C \log n \wedge \dots \wedge | \col_1^T \Bu_{n-1} |  \le C \log n.
\end{equation}

We will need the following  estimate

\begin{claim}\label{claim:negative} With respect to $M$ we have

$$\P(\sum_{i=1}^{n-1} \sigma_i^{-2} \le n^3 \log^8 n) \ge 1-\frac{1}{n \log n}.$$
\end{claim}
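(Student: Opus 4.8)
The plan is to bound $\sum_{i=1}^{n-1}\sigma_i^{-2}$ by splitting the singular values of $M$ into the $K:=\lceil\log^{2}n\rceil$ smallest ones and the remaining ones, estimating the first group via the lower tail bound \eqref{eqn:leastsing:lower} for the least singular value (used at a very small scale) and the second group via a much weaker lower bound on an intermediate singular value. The split is needed because the crude estimate $\sum_{i}\sigma_i^{-2}\le(n-1)\sigma_{n-1}^{-2}$ loses an entire factor of $n$ and would only yield a bound of size $n^{4}\log^{2}n$, whereas replacing $n-1$ by $K=\log^{2}n$ costs merely a polylogarithmic factor.

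For the first group, apply \eqref{eqn:leastsing:lower} to the $(n-1)\times(n-1)$ matrix $M$ with the parameter $t\asymp 1/(n\log n)$; this gives, off an event of probability $O(1/(n\log n))+\exp(-cn)$, that $\sigma_{n-1}(M)\gg 1/(n^{3/2}\log n)$. Since the $K$ smallest singular values are each at least $\sigma_{n-1}(M)$, their reciprocal squares sum to at most $K\,\sigma_{n-1}(M)^{-2}\ll n^{3}\log^{4}n$, comfortably below $n^{3}\log^{8}n$.

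For the second group, bounding each of the at most $n$ terms $\sigma_{n-1-i}^{-2}$ with $i\ge K$ crudely by $\sigma_{n-1-K}(M)^{-2}$, it suffices to know that $\sigma_{n-1-K}(M)\gg 1/(n\log^{4}n)$ — a very mild requirement on the $(K+1)$-th smallest singular value. The standard estimates of Rudelson--Vershynin / Tao--Vu type for the intermediate singular values of iid matrices give the far stronger bound $\sigma_{n-1-K}(M)\gg K/\sqrt n$, and, crucially, the failure probability of such estimates is $n^{-\omega(1)}$ because the relevant codimension is $K=\log^{2}n$. Adding the two contributions, $\sum_{i}\sigma_i^{-2}\le n^{3}\log^{8}n$ outside an event of probability $O(1/(n\log n))+n^{-\omega(1)}$, which is at most $1/(n\log n)$ for $n$ large; the implied constants are arranged by shrinking the exponent of $t$ a little so that each of the two contributions is at most $\tfrac12 n^{3}\log^{8}n$. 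This proves Claim \ref{claim:negative}.

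The main difficulty is supplying the input used for the second group — a lower bound, with failure probability $n^{-\omega(1)}$, on the $(K+1)$-th smallest singular value of $M$; if one prefers a self-contained derivation, it can be obtained from the distance concentration Lemma \ref{thm:distance} applied to projections onto subspaces of codimension of order $\log^{2}n$, together with an $\epsilon$-net over such subspaces. One is tempted to bypass intermediate singular values entirely through the negative second moment identity $\sum_{i}\sigma_i^{-2}=\sum_{j=1}^{n-1}\dist(R_j,H_j)^{-2}$, with $R_j$ the rows of $M$ and $H_j$ the span of the other rows; but that identity on its own only delivers failure probability of order $n^{-1/2}$ up to a logarithmic factor, since it cannot rule out a single distance as small as $n^{-3/2}$ and there are $n$ rows. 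Isolating the $O(\log^{2}n)$ genuinely small singular values is exactly what makes the target probability $1/(n\log n)$ attainable.
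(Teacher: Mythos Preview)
Your proof is correct and uses the same high-level split as the paper: separate the $K=\lceil\log^2 n\rceil$ smallest singular values from the rest, and control the smallest group via \eqref{eqn:leastsing:lower}. The treatment of the bulk differs. The paper passes to the rectangular submatrix $M'$ obtained by deleting $\log^2 n$ columns of $M$, uses Cauchy interlacing to bound $\sum_{j\le n-1-K}\sigma_j^{-2}(M)$ by $\sum_j\sigma_j^{-2}(M')$, and then applies the negative second moment identity together with the distance concentration of Lemma~\ref{thm:distance} (each row-to-hyperplane distance in $M'$ is $\gg\log n$ with overwhelming probability, since the relevant codimension is $\log^2 n$). You instead invoke directly the intermediate singular value estimate $\sigma_{n-1-K}(M)\gg K/\sqrt n$ with failure probability $n^{-\omega(1)}$ and bound the bulk sum crudely by $n\,\sigma_{n-1-K}^{-2}$. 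Both routes work; the paper's is self-contained within the lemmas already stated and yields a sharper bound on the bulk ($O(n/\log^2 n)$ versus your $O(n^2/\log^4 n)$), while yours is shorter at the price of importing an external, though standard, result. Your closing remark that the negative second moment identity alone fails on the \emph{square} matrix $M$ is correct, and is precisely why the paper first interlaces to the rectangular $M'$ before applying it.
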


\begin{proof}(of Claim \ref{claim:negative}) By \eqref{eqn:leastsing:lower}

$$\P(\sigma_{n-1}^{-1} \le n^{3/2} \log^3 n ) \ge 1- \frac{1}{n \log^3 n}.$$ 

Thus by the  union bound

$$\P(\sum_{i=1}^{\log^2 n} \sigma_{n-i}^{-2} \le n^{3} \log^8 n ) \ge 1- \frac{1}{n \log n}.$$ 

For the remaining sum $\sum_{j=1}^{n-\log n-1} \sigma_j^{-2}$, by the Cauchy-interlacing law,

$$\sum_{j=1}^{n-\log^2n-1} \sigma_j^{-2}(M) \le \sum_{j=1}^{n-\log n-1} \sigma_j^{-2}(M'),$$

where $M'$ is obtained from $M$ by deleting its first $\log^2 n$ columns.

On the other hand, by the negative second moment identity (see \cite[Lemma A.4]{TVK}) 


 \begin{equation}\label{eqn:negative1}
 \sum_{j=1}^{n-\log^2 n} \sigma_j^{-2}(M')  = \sum_{j=1}^{n-\log^2n}  d_j^{-2} , 
 \end{equation}
 
where $d_j$ is the distance from the $j$th row of $M'$ to the hyperplane $H_j$ spanned by the remaining rows of $M'$. 
Using  Theorem \ref{thm:distance} and the union bound, we obtain, for  some constant $c$ and with overwhelming probability, that   $d_j \ge c \log n $ simultaneously for all  $1 \le j \le n-\log^2 n$. This  implies that with overwhelming probability with respect to $M$
 
 $$\sum_{j=1}^{n-\log^2n} \sigma_j^{-2}  \ll  \frac{n}{\log^2n }.$$ 
 \end{proof}
 
 Now by  \eqref{eqn:c&u} and Claim \ref{claim:negative}, we have 
 
 $$\P(|x_1| \gg \frac{1}{n^3 \log^{10} n}) \ge 1 - \frac{1}{n \log n}.$$

By the union by,  we have with probability at least $1-\frac{1}{\log n}$,

$$|x_1| \ge \frac{1}{n^3 \log^{10} n} \wedge \dots \wedge |x_n| \ge \frac{1}{n^3 \log^{10} n},$$  proving the desired statement.

\section{Exponential upper tail bounds: proof of Theorem \ref{cor:upper}}\label{section:singularvalue}

  
Using \cite[Theorem 1.3]{TVleast} we can compare $\P( \sigma_n \ge t n^{-1/2} ) $ with  $\P( \sigma_n (\Bg_\F)  \ge t n^{-1/2} ) $, where $\sigma_n (\Bg_\F)$ is the least singular value of an $\F$-normalized gaussian matrix. More precisely, it shows that 
there exists a positive constant $c$ such that
 
 $$\P( \sigma_n \ge t n^{-1/2} )  \le  \P( \sigma_n (\Bg_\F)  \ge t n^{-1/2} )  + n^{-c}.$$

In the complex case, Theorem \ref{Edelman} has   $\P( \sigma_n (\Bg_\C)  \ge t n^{-1/2} )  = \exp( -t)$.
Since  $n^{-c} = \exp (- c \log n)$, this implies the claim for 
 $t \le  C \log n$ for any fixed $C$ and properly chosen constants $C_1, C_2$. 
 
 In the real case, one cannot apply Theorem \ref{Edelman} directly because of the error term is just plainly $o(1)$.  
 However, in \cite{TVleast} Tao and the second author proved that this error term is at most $n^{-c'}$ for some constant $c' >0$.  Thus, one can conclude in the same manner as in the complex case.

 From here we assume $t >  C\log n $, where $C$ is a sufficiently large constant. 
  By the proof of \eqref{eqn:infty:1} of Theorem \ref{thm:linear} (applied for matrices of size $n\times  (n+1)$ instead of $(n-1) \times n$) we have, for all $m \ge C \log n$ that 
 
 $$ \P( |x_1|  \ge m ^{1/2} n^{-1/2} )=O( \exp ( - m )).$$

Equivalently, for all $t = m \ge  C \log n $ 
$$\P( |x_1|  \ge t^{1/2} n^{-1/2} ) =O( \exp (- t )). $$

 One the other hand, similarly to our treatment in Section \ref{section:non-gap}

 $$|x_1| ^2  =\frac{1}{ 1+ \sum_{j=1}^{n} \sigma_j^{-2} ( \col_1^T \Bu_j)^2} , $$ 
 
 where $\sigma_j$ are the singular values of the random square matrix $M_{n}$ formed by the last $n$ columns, $\col_1$ is the first column, and $\Bu_j$ are the corresponding unit eigenvector of 
 $\sigma_j^2$ in  $M_{n} M_{n}^\ast$. 
 
 Thus with probability at least $1- O( \exp (- t ))$ we have
 
 \begin{equation}\label{eqn:weighted:1}
  1+ \sum_{j=1}^{n} \sigma_j^{-2} ( \col_1^T \Bu_j)^2 \ge \frac{n}{t}.
 \end{equation}

 Next, again by following the  argument  in Section \ref{section:non-gap} (using the negative-moment identity  \eqref{eqn:negative1}, the Cauchy-interlacing law, and Theorem \ref{thm:distance}), we can prove

 \begin{claim}\label{claim:non-weight} With probability at least $1 -n\exp (-\frac{t}{K_0^2})$ one has
 
 $$\sum_{j=1} ^{n- 100t}  \sigma_j  ^{-2}  \le  \frac{n}{2t} ,$$
 with $K_0$ from \eqref{eqn:xi}. 
 \end{claim}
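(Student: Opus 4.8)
The plan is to carry out, now with dependence on the parameter $t$ rather than on $\log n$, the same argument used to prove Claim~\ref{claim:negative} in Section~\ref{section:non-gap}. Let $M'$ be the $n\times(n-100t)$ matrix obtained from $M_n$ by deleting, say, its last $100t$ columns. Since deleting columns can only decrease singular values (by the Cauchy interlacing law, or because $M_nM_n^\ast\succeq M'M'^\ast$), we have $\sigma_j(M')\le\sigma_j(M_n)$ for every $1\le j\le n-100t$, hence
\[
\sum_{j=1}^{n-100t}\sigma_j^{-2}(M_n)\ \le\ \sum_{j=1}^{n-100t}\sigma_j^{-2}(M').
\]
It therefore suffices to bound the right-hand side by $n/(2t)$ off an event of probability at most $n\exp(-t/K_0^2)$.

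For this I would invoke the negative second moment identity \eqref{eqn:negative1}, applied to $M'$ exactly as in Section~\ref{section:non-gap}: as soon as the $n-100t$ columns of $M'$ are linearly independent,
\[
\sum_{j=1}^{n-100t}\sigma_j^{-2}(M')\ =\ \sum_{j=1}^{n-100t}\dist(\col_j,H_j)^{-2},
\]
where $\col_j$ denotes the $j$-th column of $M'$ and $H_j\subset\F^n$ is the span of the remaining columns of $M'$. The key structural point is that each $H_j$ has codimension at least $100t$ in $\F^n$, and $H_j$ depends only on the other columns of $M'$, hence is independent of $\col_j$, whose entries are iid copies of $\xi$.

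The main step is to show that, with probability at least $1-n\exp(-t/K_0^2)$, one has $\dist(\col_j,H_j)^2\ge 2t$ simultaneously for all $j$. Conditioning on $H_j$ and applying part (1) of Lemma~\ref{thm:distance} with $\Bu=\col_j$ and the subspace $H_j$ of codimension $m_j\ge 100t$: the distance is concentrated around $\sqrt{m_j}\ge\sqrt{100t}$, so the event $\{\dist(\col_j,H_j)<\sqrt{2t}\}$ forces a deviation of size at least $\sqrt{100t}-\sqrt{2t}>8\sqrt t$ and hence has probability at most $\exp(-64t/K_0^4)$; a union bound over the at most $n$ indices $j$ gives the asserted probability (the numerical constant $100$ is chosen large enough, depending only on $K_0$, so that $\exp(-64t/K_0^4)\le\exp(-t/K_0^2)$). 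On this event the columns of $M'$ are automatically linearly independent, since a column lying in the span of the others would be at distance $0$ from it; in particular $M_n$ has rank at least $n-100t$, the negative second moment identity applies, and
\[
\sum_{j=1}^{n-100t}\sigma_j^{-2}(M')\ =\ \sum_{j=1}^{n-100t}\dist(\col_j,H_j)^{-2}\ \le\ \frac{n-100t}{2t}\ \le\ \frac{n}{2t}.
\]
Combining this with the first display finishes the proof.

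The only point requiring care is the bookkeeping of constants: one must discard a number of columns that is a large enough multiple of $t$ so that every $H_j$ has codimension $\gg t$ — this is exactly what makes $\dist(\col_j,H_j)^2$ exceed $2t$ with probability $1-O(\exp(-\Omega(t)))$ per column — and Cauchy interlacing is what lets us pay for this only in passing from $M_n$ to $M'$. Beyond that, the argument is a routine transcription of the proof of Claim~\ref{claim:negative}.
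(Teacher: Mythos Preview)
Your proof is correct and is exactly the argument the paper has in mind: it explicitly says the claim follows ``by following the argument in Section~\ref{section:non-gap} (using the negative-moment identity \eqref{eqn:negative1}, the Cauchy-interlacing law, and Theorem~\ref{thm:distance}),'' and you have carried this out cleanly with the parameter $t$ in place of $\log n$. Your use of column distances (rather than row distances as written in Section~\ref{section:non-gap}) is in fact the correct formulation for a tall matrix $M'$, and your bookkeeping of the constant $100$ versus $K_0$ is appropriate.
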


 
 To handle the coefficients $|\col^T \Bu_j|$, we use 
  the following concentration result from \cite{VW}.

\begin{lemma}\cite[Lemma 1.2]{VW} \label{lemma:VW} 
Let $\col= (x_1, \dots, x_n)$ be a  random vector where $x_i$ are iid copies of $\xi$. Then there exists a constant $C' >0$ such that the following holds. Let $H$ be a subspace of dimension $d$ with an orthonormal basis $\{\Bu_1,\ldots, \Bu_d\}$. Then for any $0 \le c_1, \dots, c_d \le 1$ and any $s$
\begin{equation*}
\P \left( | \sqrt{\sum_{j=1}^d c_j |\col^T \Bu_j|^2} -\sqrt{\sum_{j=1}^d c_j}  | \ge s \right) \le2 \exp(-C' \frac{s^2}{K_0^4} ).
\end{equation*}
\end{lemma}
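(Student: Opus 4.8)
The plan is to recognize the quantity under the probability as the square root of a Hermitian quadratic form in the coordinates of $\col$ and then run the Hanson--Wright inequality (Lemma \ref{lemma:HW}) through an elementary square-root comparison. Set $V := \sum_{j=1}^d c_j |\col^T \Bu_j|^2$ and $R := \sum_{j=1}^d c_j$; we may assume $R>0$, the case $R=0$ being trivial. Since $\Bu_1,\dots,\Bu_d$ are orthonormal, a direct expansion shows $V = (\bar\col)^\ast B\, (\bar\col)$, where $B := \sum_{j=1}^d c_j \Bu_j \Bu_j^\ast$ is an $n\times n$ Hermitian positive semidefinite matrix with eigenvalues $c_1,\dots,c_d$ together with $0$ of multiplicity $n-d$ (in the real case $\bar\col=\col$ and $B=\sum_j c_j\Bu_j\Bu_j^T$). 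Note $\bar\col$ has iid entries that are again $\F$-normalized sub-gaussian with the same $K_0$, so Lemma \ref{lemma:HW} applies to $\Bx:=\bar\col$, $A:=B$. Using that $\xi$ is normalized one gets $\E V = \tr B = \sum_j c_j = R$, while the hypothesis $0\le c_j\le 1$ gives the two bounds that make the argument work: $\|B\|_2 = \max_j c_j \le 1$ and $\|B\|_{HS}^2 = \sum_j c_j^2 \le \sum_j c_j = R$.

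Inserting these bounds into Lemma \ref{lemma:HW} yields, for an absolute constant $c>0$ and all $t>0$,
\[
\P\big(|V-R|\ge t\big)\le 2\exp\Big(-c\,\min\big(\tfrac{t^2}{K_0^4 R},\,\tfrac{t}{K_0^2}\big)\Big).
\]
Next I would pass to $\sqrt V$ using the elementary inequality $|\sqrt a-\sqrt b|\le \min\big(\sqrt{|a-b|},\,|a-b|/\sqrt b\big)$, valid for $a\ge 0$, $b>0$ (the first bound from $(\sqrt a-\sqrt b)^2\le |a-b|$, the second from $|\sqrt a-\sqrt b|(\sqrt a+\sqrt b)=|a-b|$ and $\sqrt a+\sqrt b\ge\sqrt b$). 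Hence the event $\{|\sqrt V-\sqrt R|\ge s\}$ is contained in $\{|V-R|\ge \max(s^2,s\sqrt R)\}$, and it remains to feed the right choice of $t$ into the displayed tail bound.

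Finally, a two-case check finishes the proof, using $K_0>1$. If $s\le\sqrt R$, take $t=s\sqrt R$: then $\frac{t^2}{K_0^4 R}=\frac{s^2}{K_0^4}$ and $\frac{t}{K_0^2}=\frac{s\sqrt R}{K_0^2}\ge\frac{s^2}{K_0^2}\ge\frac{s^2}{K_0^4}$, so the minimum equals $\frac{s^2}{K_0^4}$. If $s>\sqrt R$, take $t=s^2$: then $s^2>R$ forces $\frac{t^2}{K_0^4 R}=\frac{s^4}{K_0^4 R}>\frac{s^2}{K_0^4}$, and $\frac{t}{K_0^2}=\frac{s^2}{K_0^2}\ge\frac{s^2}{K_0^4}$, so again the minimum is at least $\frac{s^2}{K_0^4}$. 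In both cases $\P\big(|\sqrt V-\sqrt R|\ge s\big)\le 2\exp(-c\,s^2/K_0^4)$, which is the asserted bound with $C'=c$.

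I expect the only genuine subtlety to be this passage from the quadratic-form tail to the square-root tail: Hanson--Wright produces, besides the sub-gaussian term $\exp(-ct^2/(K_0^4 R))$, a heavier sub-exponential term $\exp(-ct/K_0^2)$, and it is precisely the normalization $c_j\le 1$ — via $\|B\|_{HS}^2\le R$ — that makes the crossover between the two regimes occur exactly where the sub-exponential term is still dominated by $\exp(-cs^2/K_0^4)$. Dropping $c_j\le 1$ would destroy this balance, so that hypothesis is used essentially. Everything else (the identification of $V$ with $(\bar\col)^\ast B(\bar\col)$, the computations of $\E V$, $\|B\|_2$, $\|B\|_{HS}$, and the elementary square-root inequality) is routine.
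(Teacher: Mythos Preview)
Your proof is correct and is exactly the derivation the paper alludes to: the paper does not prove this lemma but quotes it from \cite{VW} and then remarks that ``one can give a short proof of this lemma using Lemma~\ref{lemma:HW}'' --- which is precisely your Hanson--Wright route via $B=\sum_j c_j \Bu_j\Bu_j^\ast$, the bounds $\|B\|_2\le 1$, $\|B\|_{HS}^2\le R$, and the square-root comparison. One cosmetic point: the claim that $\bar\col$ is again ``$\F$-normalized'' in the paper's sense need not hold literally in the complex case (the definition requires the real and imaginary parts to be iid, hence symmetric); you can sidestep this entirely by writing $V=\col^\ast B'\col$ with $B'=\sum_j c_j\,\bar\Bu_j\bar\Bu_j^\ast$, which has the same spectrum and lets you apply Lemma~\ref{lemma:HW} directly to $\col$.
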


\begin{remark}  There is a strong relation between this lemma and Lemma \ref{lemma:HW}. 
First, one can  give a short proof of this lemma using Lemma \ref{lemma:HW}. Second, one can also prove a generalization of  Lemma \ref{lemma:HW}  to sub-exponential variables (with  logarithmic correction) using this lemma.  See Remark \ref{remark:sub}. 
\end{remark}


In particular, by squaring, it follows that
\begin{equation} \label{ineq:VW} 
\P \left( | \sum_{j=1}^d c_j | \col^T \Bu_j|^2 -\sum_{j=1}^d c_j  | \ge 2s\sqrt{\sum_{j=1}^d c_j} + s^2 \right) \le 2 \exp(-C' \frac{s^2}{K_0^4} ).
\end{equation}

Next,  Lemma \ref{lemma:VW},  applied to  $\sum_{j=1}^{n} \sigma_j^{-2} ( \col_1^T \Bu_j)^2$ (with $c_j =\frac{\sigma_j^{-2}}{\sigma_{n}^{-2}}$ and $s=t^{1/2}$),  implies that 

$$\P\left(|\sum_{j=1}^{n} \sigma_j^{-2} ( \col_1^T \Bu_j)^2 - \sum_{j=1}^{n-1} \sigma_j^{-2}| \ge 2t^{1/2} \sigma_{n-1}^{-1} \sqrt{\sum_{j=1}^{n} \sigma_j^{-2}}+t \sigma_{n}^{-2} \right)  \le 2 \exp(-C' \frac{t}{K_0^4} ).$$

 Thus, with probability at least $1- 2\exp(-C' \frac{t}{K_0^4})$,  we have
 
 \begin{equation}\label{eqn:weighted:2}
 \sum_{j=1}^{n} \sigma_j^{-2} ( \col_1^T \Bu_j)^2 \le  \sum_{j=1}^{n} \sigma_j^{-2} + 2t^{1/2} \sigma_{n}^{-1} \sqrt{\sum_{j=1}^{n} \sigma_j^{-2}}+t \sigma_{n}^{-2}.
 \end{equation} 
 
Now we can conclude  from \eqref{eqn:weighted:1}, \eqref{eqn:weighted:2} and Claim \ref{claim:non-weight} (noting that $t\ge C\log^{3/2}n$) that with probability at least $1 -2\exp ( -C'\frac{t}{K_0^4})$ 
 
 $$\frac{n}{t} \le (\frac{n}{2t} + 100t \sigma_{n}^{-2})+  2t^{1/2} \sigma_{n}^{-1}  \sqrt{\frac{n}{2t} + 100t \sigma_{n}^{-2}} +t \sigma_{n}^{-2}. $$ 

This event guarantees that  $\frac{n}{4t} \le 100 t \sigma_{n}^{-2}$, or equivalently  $\sigma_{n} \le 20\frac{t}{\sqrt{n}}$. Our proof is complete.

\section{Normality of vectors: proof of \eqref{eqn:normality}}\label{section:normality} 

We will show that

\begin{equation}\label{eqn:d=1}
|\P((\sqrt{n}x_1\in \Omega) - \P(\Bg_{\F,1} \in \Omega)| \le n^{-c'}.
\end{equation}

The general case with joint distribution of $d$ components, with $d$ chosen to be a small power of $n$, can be treated similarly; see also \eqref{eqn:d=d} below.

Our method follows  that of \cite{TVleast}.   First, by \eqref{eqn:infty:1} of Theorem \ref{thm:linear}, it suffices to work with the event $\CE$

\begin{equation}\label{eqn:CE}
|x_i| = O(\frac{\log^{1/2}n}{\sqrt{n}}), 1\le i\le n.
\end{equation}

We will need the following result  (see for instance \cite[Theorem 3.1]{ACW}).

\begin{theorem}\label{thm:concentration} Assume that $\sum_{i=1}^n  n|x_i|^2=n$ and $n|x_i|^2 \le L$ for all $i$. Then there exists an absolute constant $c$ such that for a uniformly randomly chosen $(m-1)$-set $\{i_1,\dots,i_{m-1}\}$ from the index set $\{2,\dots,n\}$

$$\P\Big(\big | n|x_{i_1}|^2 +\dots + n|x_{i_{m-1}}|^2 - (m-1)\big | \ge t\Big) \le 2\exp(-ct^2/L^2),$$
where the probability is with respect to $\{i_1,\dots,i_{m-1}\}$.
\end{theorem}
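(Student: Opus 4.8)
The plan is to restate the estimate in terms of the weights $a_i := n|x_i|^2$, so that $a_i \in [0,L]$, $\sum_{i=1}^n a_i = n$, and $Z := a_{i_1}+\dots+a_{i_{m-1}}$ is a linear statistic of a uniformly random $(m-1)$-element sample drawn \emph{without replacement} from the finite population $\{a_2,\dots,a_n\}$ of size $n-1$. In this language the claimed inequality is a standard sampling-without-replacement concentration bound; indeed it is essentially \cite[Theorem 3.1]{ACW} applied to this population, once we have checked that the centering constant $m-1$ agrees with the true mean of $Z$ up to an error that can be absorbed into $t$.

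First I would compute $\E Z$. Since every index of $\{2,\dots,n\}$ is included with probability $(m-1)/(n-1)$, we have $\E Z = \frac{m-1}{n-1}\sum_{i=2}^n a_i = \frac{m-1}{n-1}(n-a_1)$, hence
$$|\E Z - (m-1)| = (m-1)\,\frac{|1-a_1|}{n-1} \le L,$$
using $0 \le a_1 \le L$, the automatic bound $L \ge 1$ (the $a_i$ average to $1$, so some $a_i\ge 1$), and $m-1\le n-1$. Therefore it suffices to bound $\P(|Z-\E Z|\ge t-L)$: for $t \le 2L$ the asserted right-hand side $2\exp(-ct^2/L^2)$ already exceeds $1$ once $c$ is a small enough absolute constant, so the inequality is trivial there; and for $t>2L$ we have $t-L>t/2$, so it is enough to control the deviation of $Z$ from its own mean at scale $t/2$.

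For that deviation I would invoke the Hoeffding-type tail bound for sampling without replacement. The cleanest self-contained route is Hoeffding's classical convexity comparison: for every convex $\phi$,
$$\E\,\phi(Z) \le \E\,\phi(\widetilde Z), \qquad \widetilde Z := Y_1+\dots+Y_{m-1},$$
where $Y_1,\dots,Y_{m-1}$ are iid uniform on $\{a_2,\dots,a_n\}$, i.e.\ sampling \emph{with} replacement. Taking $\phi(u)=e^{\lambda u}$ transfers the moment generating function estimate to the independent model, where each $Y_j - \E Y_j$ lies in an interval of length at most $L$, so the Hoeffding lemma gives $\E e^{\lambda(Y_j-\E Y_j)} \le e^{\lambda^2 L^2/8}$; multiplying over $j$, optimising in $\lambda$ (and repeating with $-Z$), and combining with the mean estimate from the previous paragraph yields a bound of the required form $2\exp(-c t^2/L^2)$ after renaming the absolute constant. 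Equivalently one may argue via the negative association of the inclusion indicators $\oindicator{i \in \{i_1,\dots,i_{m-1}\}}$, which makes the exponential-moment bounds for independent sums directly applicable to $Z=\sum_i a_i \oindicator{i\in\{i_1,\dots,i_{m-1}\}}$.

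The only genuinely delicate point is the comparison between sampling with and without replacement that keeps the sub-Gaussian exponent sharp — this is exactly what Hoeffding's convexity inequality (or, equivalently, negative association) provides, and it is the substance of the cited \cite[Theorem 3.1]{ACW}; the remaining ingredients are the elementary computation of $\E Z$ and the Hoeffding lemma. I would also note that the identical argument, applied coordinate by coordinate, handles the variants of this concentration statement needed later when several distinguished coordinates must be controlled simultaneously.
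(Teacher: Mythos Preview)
The paper does not give a proof here; it simply invokes \cite[Theorem 3.1]{ACW}. Your reduction to a linear statistic under sampling without replacement and the appeal to Hoeffding's convexity comparison with the iid model are exactly the right route to such a bound. However, there is a genuine gap in your last step. After the Hoeffding lemma gives $\E e^{\lambda(Y_j-\E Y_j)} \le e^{\lambda^2 L^2/8}$ and you multiply over the $m-1$ independent summands, the moment generating function bound is $e^{(m-1)\lambda^2 L^2/8}$; optimising the Chernoff bound then yields
\[
\P\bigl(|\widetilde Z-\E\widetilde Z|\ge s\bigr)\;\le\;2\exp\Bigl(-\frac{2s^2}{(m-1)L^2}\Bigr),
\]
not $2\exp(-c\,s^2/L^2)$. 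The factor $m-1$ in the denominator cannot be removed: a sum of $m-1$ terms, each lying in an interval of length $L$, has sub-Gaussian parameter of order $(m-1)L^2$, not $L^2$. The negative-association route you mention produces the same exponent.

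Indeed the inequality as displayed is not true in general. Take $n$ even, set $a_i=2$ for $i\le n/2$ and $a_i=0$ otherwise (so $L=2$ and $\sum_i a_i=n$), and let $m-1=\lfloor(n-1)/2\rfloor$. Then $Z$ is twice a hypergeometric count with variance of order $n$, so $|Z-(m-1)|\ge c\sqrt n$ holds with probability bounded away from $0$, whereas the asserted right-hand side $2\exp(-c'\,n/4)$ tends to zero. What your argument \emph{does} correctly establish is the Hoeffding-type bound $2\exp\bigl(-ct^2/((m-1)L^2)\bigr)$; the exponent $-ct^2/L^2$ appears to be a slip in the paper's transcription of \cite{ACW}. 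You should record the bound you actually prove, and when it is applied at \eqref{eqn:CF} either enlarge $t$ to absorb the extra $(m-1)$ or verify separately that the weaker inequality still suffices for the purposes of Section~\ref{section:normality}.
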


For convenience, denote by  $\CF_{i_1,\dots,i_{m-1}}$  the event 

\begin{equation}\label{eqn:CF}
\Big |n|x_{i_1}|^2 +\dots + n|x_{i_{m-1}}|^2 - (m-1)\Big | \le \log^2 n.
\end{equation}

By Theorem \ref{thm:concentration}, with $L=O(\log^{1/2}n)$ and $t=\log^2 n$

$$\P(\CE \wedge \CF_{i_1,\dots,i_{m-1}}) =1 -n^{-\omega(1)}.$$

We are conditioning on these two events for the rest of the argument. 

With foresight, we  choose $m$ slightly larger than the value in Section \ref{section:infty}, in particularly $m$ will take the form $n^{1/C_0}$ for some sufficiently large constant $C_0$ to be chosen later.
We next exploit \eqref{eqn:starting} once more by projecting onto the orthogonal complement $H^\perp$ of $H$. This time we view the projection as $\pi_H:\R^{n-1} \to \R^{m-1}$,  

$$x_{1} \pi_H(\col_1)+ x_{i_1} \pi_H(\col_{i_1}) +\dots + x_{i_{m-1}} \pi_H(\col_{i_{m-1}}) = \sum_{j=0}^{m-1} x_{i_j} \pi_H(\col_{i_j}) = 0.$$

By a normalization $y_{i_j}:= x_{i_j} /\sqrt{\sum_{j=0}^{m-1} |x_{i_j}|^2}$, we rewrite as (with $i_0=1$)

\begin{equation}
y_{i_0} \pi_H(\col_{i_0})+ y_{i_1} \pi_H(\col_{i_1}) +\dots + y_{i_{m-1}} \pi_H(\col_{i_{m-1}}) = 0.
\end{equation}

For $1\le i\le n-1$, let $\Bu_i=\pi_H(\Be_i)\in \R^{m-1}$ be the projection of the standard unit vector $\Be_i$. Then for $1\le j\le m-1$

$$\pi_H(\col_{i_j}) = \sum_{1\le i\le n-1} a_{ii_j} \Bu_i,$$

where $a_{ii_j}$ are the entries of our matrix $A$.

In other words, one can view the $(m-1)\times m$ matrix $M=(\pi_H(\col_{i_0}), \pi_H(\col_{i_1})\dots, \pi(\col_{i_{m-1}}))$ as

$$M = \sum_{1\le i\le n-1,0\le j\le m-1} a_{ii_j} M_{ii_j},$$

where $M_{ii_j}$ is the $(m-1) \times m$ matrix whose columns are zero except the $i_j$-th one, which is $\Bu_i$. Next we record a useful lemma about the matrix $M$, which can be proved by standard techniques from \cite{RV-LO, TVcomp}. 

\begin{lemma}\label{lemma:M}
With high probability with respect to $a_{ij}$, the least singular value of $M M^\ast$ is at least $m^{-2}$ and the largest singular value of $M M^\ast$ is at most $m^2$. 
\end{lemma}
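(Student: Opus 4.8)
The plan is to argue conditionally on the subspace $H$: with overwhelming probability $\dim H^\perp = m-1$, and once such an $H$ is fixed the $m$ columns $\pi_H(\col_{i_0}),\dots,\pi_H(\col_{i_{m-1}})$ of $M$ become independent. Expressing them in an orthonormal basis $\Bu_1,\dots,\Bu_{m-1}$ of $H^\perp$, each column is an isotropic random vector in $\F^{m-1}$: its one-dimensional marginals $\langle \col_{i_j},\Bw\rangle$ ($\|\Bw\|_2=1$) are linear combinations of the iid $\F$-normalized subgaussian entries of $A$, hence have mean $0$, variance $1$, and subgaussian constant $O(K_0)$. So, conditionally on $H$, $M$ is a random matrix with independent, isotropic, uniformly subgaussian columns, and the two bounds we want — $\sigma_{\max}(MM^\ast)\le m^2$ and $\sigma_{\min}(MM^\ast)\ge m^{-2}$ — are far weaker than the $\Theta(\sqrt m)$-type estimates expected for such matrices, so crude versions of the arguments in \cite{RV-LO, TVcomp} will suffice.

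For the upper bound, $\sigma_{\max}(MM^\ast)=\|M\|_2^2$ and $\|M\|_2 = \sup_{\Bv\in\F^m,\ \|\Bv\|_2=1}\|\pi_H(\sum_j v_j\col_{i_j})\|_2$. For a fixed unit $\Bv$, the vector $\sum_j v_j\col_{i_j}$ has iid normalized subgaussian coordinates, so Lemma \ref{thm:distance}(1) (equivalently Lemma \ref{lemma:HW} applied to the projection $P_H$) gives $\|\pi_H(\sum_j v_j\col_{i_j})\|_2\le\sqrt{m-1}+t$ with probability $1-\exp(-t^2/K_0^4)$. Covering the unit sphere of $\F^m$ by a $\tfrac12$-net of size $e^{O(m)}$, choosing $t$ a large enough multiple of $\sqrt m$, and using the usual net-to-sphere passage yield $\|M\|_2=O(\sqrt m)$ off an event of probability $e^{-\Omega(m)}=n^{-\omega(1)}$, hence $\sigma_{\max}(MM^\ast)=O(m)\le m^2$ for $n$ large. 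The compressible part of the lower bound is similar: after deleting the column $\pi_H(\col_{i_0})$ to pass to the square matrix $M_0$ (legitimate since $MM^\ast=M_0M_0^\ast+\pi_H(\col_{i_0})\pi_H(\col_{i_0})^\ast$ is a rank-one PSD perturbation, so eigenvalue interlacing gives $\sigma_{\min}(MM^\ast)\ge\sigma_{\min}(M_0M_0^\ast)$), for each coordinate set $S$ with $|S|\le\delta m$ and each unit $\Bv$ supported on $S$ we have $\|M_0\Bv\|_2=\|\pi_H(\sum_{j\in S}v_j\col_{i_j})\|_2\ge\sqrt{m-1}-t$ by Lemma \ref{thm:distance}(1); a net over such $\Bv$, the bound $\binom{m}{\delta m}e^{O(\delta m)}$ on the number of (set, net point) pairs, and the already-established $\|M_0\|_2=O(\sqrt m)$ give $\inf_{\Bv\ \mathrm{compressible}}\|M_0\Bv\|_2\ge c\sqrt m$ outside an event of probability $n^{-\omega(1)}$.

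It remains to treat incompressible unit vectors, where it suffices to prove $\sigma_{\min}(M_0)\ge m^{-1}$. For an incompressible $\Bv$ and any index $j$ in its spread support (a set of size $\ge\delta m$), $\|M_0\Bv\|_2\ge|v_j|\dist(\pi_H(\col_{i_j}),H_j)\ge\frac{\rho}{\sqrt{m-1}}\dist(\pi_H(\col_{i_j}),H_j)$, where $H_j$ is the span of the remaining columns of $M_0$; the standard counting (Markov) step then reduces everything to the single-column estimate $\max_j\P(\dist(\pi_H(\col_{i_j}),H_j)\le c\,m^{-1/2})$. The key point is that, with overwhelming probability, $\dist(\pi_H(\col_{i_j}),H_j)=|\langle\col_{i_j},\Bw_j\rangle|$, where $\Bw_j$ is (up to phase) the unit normal in $\F^{n-1}$ of the hyperplane spanned by all columns of $A$ except $\col_{i_0}$ and $\col_{i_j}$ — indeed the relevant subspace $W_j\subseteq\F^{n-1}$ is the orthogonal complement of those $n-2$ columns, of dimension $1$ with overwhelming probability. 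Now $\Bw_j$ is independent of $\col_{i_j}$, and by \eqref{eqn:infty:1} (already proved, applied to the $(n-2)\times(n-1)$ submatrix) $\|\Bw_j\|_\infty=O(\sqrt{\log n/n})$ with high probability, so the Berry–Essén bound of Lemma \ref{lemma:BE'} yields $\P(|\langle\col_{i_j},\Bw_j\rangle|\le s)=O(s)+O(\sqrt{\log n/n})$. Taking $s$ of order $m^{-1/2}$ gives $\max_j\P(\dist\le c\,m^{-1/2})=O(m^{-1/2})$, whence $\sigma_{\min}(M_0)\ge m^{-1}$, and therefore $\sigma_{\min}(MM^\ast)\ge m^{-2}$, with probability $1-O(m^{-1/2})-n^{-\omega(1)}=1-n^{-\Omega(1)}$ since $m=n^{\Omega(1)}$.

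The main obstacle is precisely this incompressible case, and more specifically the bookkeeping that avoids a ruinous union bound: the crude inequality $\sigma_{\min}(M_0)\ge m^{-1/2}\min_j\dist(\pi_H(\col_{i_j}),H_j)$ is useless here, because each distance is only $\ge c\,m^{-1/2}$ with probability $1-O(m^{-1/2})$ and there are $m$ columns. The compressible/incompressible decomposition of \cite{RV-LO} is exactly the device that replaces this union bound by a counting argument in which only the single-column probability $\max_j\P(\dist\le c\,m^{-1/2})$ enters. Everything else — the interlacing reduction to $M_0$, the net arguments, the identification of $\dist$ with $|\langle\col_{i_j},\Bw_j\rangle|$, and the appeals to \eqref{eqn:infty:1} and Lemma \ref{lemma:BE'} — is routine; alternatively one may re-derive the delocalization of $\Bw_j$ inside the Rudelson–Vershynin scheme itself, which is the route suggested by the citation of \cite{RV-LO, TVcomp}.
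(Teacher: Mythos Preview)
The paper does not actually give a proof of this lemma --- it only asserts that it ``can be proved by standard techniques from \cite{RV-LO, TVcomp}'' --- and your proposal is a correct and careful instantiation of exactly those techniques: condition on $H$, use the net/Hanson--Wright argument for the operator norm and for compressible vectors, reduce to the square matrix $M_0$ by PSD monotonicity, and handle incompressible vectors via the Rudelson--Vershynin distance-counting lemma. The one small variation is that for the small-ball step you lift the normal of $H_j$ back to $\F^{n-1}$ and invoke the already-proved delocalization bound \eqref{eqn:infty:1}; this is perfectly valid (and indeed $\Bw_j\in H^\perp$, so one could equally cite the flatness Lemma~\ref{lemma:norm} that the paper states immediately afterward), and either route gives the needed Berry--Ess\'een input $\|\Bw_j\|_\infty\le n^{-c}$.
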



Let $\CE=\CE_{i_1,\dots,i_{m-1}}$ be this event. By the property of projection $\pi_H^\ast \pi_H=I_{m-1}$, 

\begin{equation}\label{eqn:I}
\sum_{0\le j\le m-1} \sum_{1\le i\le n-1} M_{ii_j} M^\ast_{ii_j} =I_{m(m-1)},
\end{equation}

where  we view $M_{ii_j}$ as vectors in $\R^{m(m-1)}$. 

Now for any fixed $(t_0,\dots,t_d) \in  {\R_+}^{d+1}$, with $d\le m^{1/2}$, let $\Omega \subset \R^{m(m-1)}$ be the set of matrices $M$ of size $m\times (m-1)$ satisfying Lemma \ref{lemma:M} such that the normal vector $(y_0,\dots,y_{m-1})$ satisfies $\sqrt{m}|y_{i_0}| \le t_0,\dots, \sqrt{m}|y_{i_d}|\le t_d$. For convenience, define

\begin{align}\label{eqn:pt}
p_{t_0,\dots,t_d}&:= \P\Big(\sqrt{m} |y_{i_0}| \le t_0,\dots, \sqrt{m} |y_{i_d}| \le t_d \Big |\col_j, j\notin \{i_0,\dots,i_{m-1}\} \wedge \CE\Big) \nonumber \\
&= \P\Big( \sum_{1\le i\le n-1,0\le j\le m-1} a_{ii_j} M_{ii_j}\in \Omega\Big | \col_j, j\notin \{i_0,\dots,i_{m-1}\} \wedge \CE \Big).
\end{align}

As with \eqref{eqn:I} we are ready to apply Lemma \ref{lemma:BE}. It is crucial to notice that conditioning on $ \col_j, j\notin \{i_0,\dots,i_{m-1}\}$, the approximating matrix $\sum_{1\le i\le n-1,0\le j\le m-1} \Bg_{ii_j} M_{ii_j}$ is a gaussian iid matrix of size $(m-1)\times m$, and hence Theorem \ref{thm:gaussian} applies to the normal vector $(y_{i_0,\Bg},\dots,y_{i_d,\Bg})$ of this matrix

\begin{align}\label{eqn:comparison:0}
& \P\Big( \sum_{1\le i\le n-1,0\le j\le m-1} \Bg_{ii_j} M_{ii_j} \in  \Omega/ \partial_\eps \Omega  \Big) - O\Big(m^{5} \eps^{-3} \max_{ii_j} \|M_{ii_j}\|_\infty\Big) \le p_{t_0,\dots,t_d} \le \nonumber \\
& \le  \P\Big( \sum_{1\le i\le n-1,0\le j\le m-1} \Bg_{ii_j} M_{ii_j} \in \Omega \cup \partial_\eps \Omega  \Big) +O\Big(m^{5} \eps^{-3}  \max_{ij} \|M_{ij}\|_\infty \Big).
\end{align}

For $\|M_{ii_j}\|_{\infty}$, we apply the following crucial lemma from \cite[Proposition 3.5]{TVleast}.

\begin{lemma}[flatness of orthogonal projection]\label{lemma:norm} There exists a positive constant $c$ (independently of $C_0$) such that the following holds with overwhelming probability with respect to $ \col_j, j\notin \{i_0,\dots,i_{m-1}\}$: for any unit vector $\Bv\in H^\perp$ we have

$$\|\Bv\|_\infty \le n^{-c}.$$
\end{lemma}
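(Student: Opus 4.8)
The plan is to prove the "flatness of orthogonal projection" statement, i.e.\ Lemma~\ref{lemma:norm}: with overwhelming probability over the columns $\col_j$, $j \notin \{i_0,\dots,i_{m-1}\}$, every unit vector $\Bv$ in $H^\perp$ has $\|\Bv\|_\infty \le n^{-c}$ for some absolute $c>0$. Recall that $H$ is the span of $\{\col_j : j \notin \{i_0,\dots,i_{m-1}\}\}$ inside $\F^{n-1}$, a subspace of dimension roughly $n-m$, so that $H^\perp$ has dimension $m-1 = n^{1/C_0}-1$, which is much smaller than $n$. The point is purely a statement about the random subspace $H$ spanned by $n-m$ iid columns: its orthogonal complement contains no vector that is concentrated on a single coordinate.

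The main idea is to reformulate the conclusion in terms of distances. Fix a coordinate $i$, say $i=1$; write $\F^{n-1} = \F \Be_1 \oplus \F^{n-2}$, and let $H'$ be the projection of $H$ onto $\F^{n-2}$ (equivalently, $H$ with the first coordinate forgotten). Then a unit vector $\Bv \in H^\perp$ has a large first coordinate $|v_1| \ge n^{-c}$ precisely when $\Be_1$ is close (within $O(n^{-c})$ in a suitable sense, after normalization) to the subspace $H$; more precisely one checks that $\max_{\Bv \in H^\perp, \|\Bv\|_2 = 1} |v_1| = \operatorname{dist}(\Be_1, H)$ is \emph{not} the right quantity — rather one wants: $|v_1|$ is small for all such $\Bv$ iff the projection $P_{H^\perp}\Be_1$ is small. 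So it suffices to show that with overwhelming probability $\|P_{H^\perp} \Be_1\|_2 \le n^{-c}$, i.e.\ the standard basis vector $\Be_1$ is almost entirely contained in $H$. Since $H$ is spanned by $n-m$ iid subgaussian column vectors $\col_j$ and $n - m$ is within a constant factor of $n-1 = \dim(\F^{n-1})$, this is exactly the kind of statement handled by the compressible/incompressible machinery of Rudelson--Vershynin \cite{RV-LO} and the net arguments of \cite{TVcomp}: a random subspace spanned by linearly many generic vectors is ``flat'' and picks up almost all of any fixed direction.

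The key steps, in order, would be: (i) reduce, by a union bound over the $n-1$ coordinates $i$, to showing $\P(\|P_{H^\perp}\Be_i\|_2 > n^{-c}) = n^{-\omega(1)}$ for a single $i$; (ii) note $\|P_{H^\perp}\Be_i\|_2 = \operatorname{dist}(\Be_i, H)$ and that this distance can be computed as $\operatorname{dist}(\Be_i, H) = |\langle \Be_i, \Bw\rangle|$ maximized over unit $\Bw \perp H$, which by duality equals $1/\|(\text{relevant Gram/inverse quantity})\|$; more usefully, bound $\operatorname{dist}(\Be_i,H)$ above by exhibiting, for each $i$, a near-solution to $\Be_i \approx \sum_j c_j \col_j$ — this is where one invokes invertibility/conditioning estimates for the $(n-m)\times(n-m)$ submatrix of $A$; (iii) apply the least-singular-value bound \eqref{eqn:leastsing:lower} (Rudelson--Vershynin) to the tall random matrix whose columns are the $\col_j$, $j\notin\{i_0,\dots,i_{m-1}\}$, to conclude that this matrix is well-conditioned on its column span with overwhelming probability, and combine with Lemma~\ref{thm:distance} (concentration of distances, giving $\operatorname{dist}(\col_j, \text{rest}) \asymp \sqrt m$) to control the coefficients $c_j$ and hence the residual; (iv) assemble the pieces and take $c$ small enough (depending on $C_0$ through the exponent in $m = n^{1/C_0}$, but the lemma asserts $c$ can be taken independent of $C_0$, which just requires being a little careful that the $n^{-c}$ bound degrades only polynomially and $m$ is a small power of $n$).

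The main obstacle I expect is step (ii)--(iii): honestly controlling $\operatorname{dist}(\Be_i, H)$ for a \emph{fixed} direction $\Be_i$ against a random subspace of \emph{small} codimension $m-1$. The subtlety is that $\Be_i$ is a very special (sparse) vector, so one cannot simply quote a generic concentration-of-distance bound; instead one must run the compressible-vs-incompressible dichotomy from \cite{RV-LO, TVcomp} on the transpose matrix, handling compressible approximants of $\Be_i$ by an $\eps$-net union bound (using subgaussianity to get the per-net-point small-ball estimate $n^{-\omega(1)}$ for a fixed approximant, which works because we have $n-m$ independent columns contributing) and incompressible ones by the standard averaging argument reducing to a single-column distance bound controlled via Lemma~\ref{thm:distance}. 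Getting the constant $c$ to be genuinely independent of $C_0$ requires observing that the only place $m$ enters is through the dimension of $H^\perp$, and since $m = o(n^{\delta})$ for any $\delta$ of our choosing, the losses are absorbed; I would flag this as the point needing the most care but not a genuinely new difficulty, since it is essentially \cite[Proposition 3.5]{TVleast} restated, and I would cite that proof for the routine parts.
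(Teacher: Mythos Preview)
The paper does not prove this lemma at all; it simply quotes it as \cite[Proposition~3.5]{TVleast}. So there is nothing to compare against except your own sketch, and your final instinct --- to cite that proposition for the routine parts --- is exactly what the paper does.

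That said, your sketch takes a detour that is not needed. The reduction in step~(i) is correct: $\sup_{\Bv\in H^\perp,\,\|\Bv\|_2=1}|v_i|=\|P_{H^\perp}\Be_i\|_2=\dist(\Be_i,H)$, so it suffices to show $\dist(\Be_i,H)\le n^{-c}$ with overwhelming probability for each $i$. But your worry that $\Be_i$ is ``a very special (sparse) vector'' requiring the compressible/incompressible dichotomy is misplaced. Pass to the row picture: if $\Br_1,\dots,\Br_{n-1}\in\F^{n-m}$ are the (iid) rows of the $(n-1)\times(n-m)$ matrix $B$ whose column span is $H$, a Sherman--Morrison computation gives
\[
\dist(\Be_i,H)^2=\frac{1}{1+q_i},\qquad q_i:=\Br_i\,(B_{(-i)}^\ast B_{(-i)})^{-1}\,\Br_i^\ast,
\]
where $B_{(-i)}$ is $B$ with row $i$ deleted. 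Now $\Br_i$ is an iid subgaussian vector \emph{independent} of $B_{(-i)}$, so Hanson--Wright (Lemma~\ref{lemma:HW}) gives $q_i\approx\tr\bigl((B_{(-i)}^\ast B_{(-i)})^{-1}\bigr)=\sum_j\sigma_j^{-2}(B_{(-i)})$ with overwhelming probability; by the negative second moment identity and Lemma~\ref{thm:distance} this trace is $\asymp n/m$, yielding $\dist(\Be_i,H)\ll\sqrt{m/n}\le n^{-1/4}$ for $C_0\ge2$. No net argument, no dichotomy, and the constant $c$ is visibly independent of $C_0$. (Your reference to \eqref{eqn:leastsing:lower} is also slightly off --- that bound is for square matrices; controlling $\|(B_{(-i)}^\ast B_{(-i)})^{-1}\|_2$ in the Hanson--Wright error term needs the rectangular result \cite{RV-rec}, whose failure probability is $\exp(-c\,m)$ and hence overwhelming here.)
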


For short we let $\CG_{i_1,\dots,i_{m-1}}$ be the event considered in Lemma \ref{lemma:norm}, thus 

$$\P(\CG_{i_1,\dots,i_{m-1}})=1-n^{-\omega(1)}.$$ 

Let us now consider the sets $ \Omega/ \partial_\eps \Omega $ and $\Omega \cup \partial_\eps \Omega$. Assume that $M,M' \in \Omega$ with normal vectors $\By=(y_0,\dots,y_{m-1})$ and $\By'=(y_0',\dots,y_{m-1}')$ and such that $\|M-M'\|_\infty \le \eps$. Then as $\|M \By'\|_2 = \|(M-M')\By'\|_2 \le \|M'-M\|_2 \le m \eps$, we have (rather generously) $\|M^\ast M \By'\|_2 \le m^3 \eps$. By definition of $\Omega$ (which satisfies Lemma \ref{lemma:M}), it then follows that (again very generously)

$$\|\By-\By\|_\infty \le m^8 \eps.$$

Hence it follows from \eqref{eqn:comparison:0} that

\begin{align}\label{eqn:comparison:1}
&\P\Big(\sqrt{m}|y_{i_0,\Bg}| \le t_0-m^8 \eps,\dots, \sqrt{m}|y_{i_d,\Bg}| \le t_d- m^8 \eps \Big) - O\Big(m^{5} \eps^{-3} \max_{ii_j} \|M_{ii_j}\|_\infty\Big) \le p_{t_0,\dots,t_d} \le \nonumber \\
& \le  \P\Big(\sqrt{m}|y_{i_0,\Bg}| \le t_0+m^8 \eps,\dots, \sqrt{m}|y_{i_d,\Bg}| \le t_d + m^8 \eps \Big) +O\Big(m^{5} \eps^{-3}  \max_{ij} \|M_{ij}\|_\infty \Big).
\end{align}

Now choose  $\eps=n^{-c/4}$ (with $c$ from Lemma \ref{lemma:norm}) and $m=n^{c/64}$. We have 

\begin{align}\label{eqn:comparison:1'}
&\P\Big(\sqrt{m}|y_{i_0,\Bg}| \le t_0-n^{-c/8},\dots, \sqrt{m}|y_{i_d,\Bg}| \le t_d-n^{-c/8} \Big) - O(n^{-c/8}) \le p_{t_0,\dots,t_d} \le \nonumber \\ 
&\le \P\Big(\sqrt{m}|y_{i_0,\Bg}| \le t_0+n^{-c/8},\dots, \sqrt{m}|y_{i_d,\Bg}| \le t_d+n^{-c/8} \Big)  +O(n^{-c/8}).
\end{align}

By Theorem \ref{thm:gaussian}, we have, for some constant $c'$ sufficiently small depending on $c$

\begin{equation}\label{eqn:comparison:2}
\Big |p_{t_0,\dots,t_d}- \P\Big(\sqrt{m}|y_{i_0,\Bg}| \le t_0,\dots, \sqrt{m}|y_{i_d,\Bg}| \le t_d \Big) \Big |= O(n^{-c'}).
\end{equation}

Now we pass from $\sqrt{m}y_{i_j}$ to $\sqrt{n}x_{i_j}$ conditioning on $\CE \wedge \CF_{i_1,\dots,i_{m-1}}$. On this event, by \eqref{eqn:CF}
$$\left| |x_{i_0}|^2+\dots+|x_{i_{m-1}}|^2 - \frac{m}{n}\right | \le \frac{\log^2 n}{n}.$$ 

In other words,

$$\Big|\sqrt{|x_{i_0}|^2+\dots+|x_{i_{m-1}}|^2} - \sqrt{\frac{m}{n}} \Big| \ll \frac{\log^2n}{n} / \sqrt{\frac{m}{n}} \ll  \frac{\log^2 n}{\sqrt{ mn}}.$$

Consequently, 

\begin{align}\label{eqn:comparison:3}
|\sqrt{m}y_{i_j} - \sqrt{n}x_{i_j}|&= \Big|\sqrt{m} x_{i_j} ( \frac{1}{\sqrt{|x_{i_0}|^2+\dots+|x_{i_{m-1}}|^2}} - \frac{1}{\sqrt{\frac{m}{n}}}) \Big| \nonumber \\
& \ll |\sqrt{m} x_{i_j}| \frac{\log^2 n}{\sqrt{mn}}/ \frac{m}{n} \nonumber \\
&\ll \frac{\log^{5/2} n}{m},
\end{align}

where we used the bound $|x_i|=O( \frac{\log^{1/2}n}{\sqrt{n}})$ in the last estimate.

In summary, it follows from \eqref{eqn:comparison:2} and \eqref{eqn:comparison:3} that conditioning on $\CE \wedge \CF_{i_1,\dots,i_{m-1}}\wedge \CG_{i_1,\dots,i_{m-1}}$ 

\begin{equation}\label{eqn:d=d}
\P\Big(\sqrt{n}|x_{i_0}|\le t_0,\dots, \sqrt{n} |x_{i_d}|\le t_d\Big) =\P\Big(|\Bg_{\F,0}|\le t_0,\dots, |\Bg_{\F,d}|\le t_d\Big) +O(n^{-c''}).
\end{equation}

for some absolute constant $c''$. In particularly, this immediately implies \eqref{eqn:d=1} as all of the conditional events hold with high probability.


\section{proof of \eqref{eqn:innerproduct}}\label{section:innerproduct}

The  treatment here follows closely  \cite[Proposition 25]{TVvector}. Let $\alpha$ be a number growing slowly to infinity that will be specified later. For each component $x_i$ of $\Bx$ we decompose

$$x_i = x_i 1_{|\sqrt{n}x_i| \le \alpha} + x_i 1_{|\sqrt{n}x_i > \alpha}:= x_{i, \le} + x_{i,>}.$$ 

We then decompose $\Bx = \Bx_{\le} + \Bx_{>}$ accordingly. For \eqref{eqn:innerproduct} it suffices to show 
\begin{claim} With an appropriate choice of $\alpha$ we have
\begin{enumerate}[(i)]
\item $\sqrt{n}\Bx_{\le}^T \Bu \overset{d}{\to} N(0,1)$;
\vskip .1in
\item $\sqrt{n}\Bx_{>}^T \Bu$ converges to zero in probability.
\end{enumerate}
\end{claim}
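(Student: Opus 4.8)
The plan is to prove the two parts of the Claim separately, both of them relying on the delocalization bound \eqref{eqn:infty:1} of Theorem \ref{thm:linear} to control the truncation level, together with the Berry--Ess\'een-type estimate of Lemma \ref{lemma:BE'} for the central limit statement. First I would choose $\alpha = \alpha(n)$ to be a slowly growing function, e.g.\ $\alpha = \log\log n$ (the precise rate will be pinned down at the end, balancing the error terms below); the point is that $\alpha \to \infty$ slowly enough that $\alpha \|\Bx\|_\infty \to 0$ on the good event, since by \eqref{eqn:infty:1} we have $\|\Bx\|_\infty = O(\sqrt{\log n/n})$ with overwhelming probability, hence $\sqrt n \|\Bx\|_\infty = O(\sqrt{\log n})$ and $\alpha/\sqrt{\log n}\to 0$ is not quite what we want --- rather we want $\max_i |x_{i,\le}| \le \alpha/\sqrt n$ by construction, and the tail $\Bx_>$ vanishes because on the good event \emph{no} coordinate exceeds $\alpha/\sqrt n$ once $\alpha \gg \sqrt{\log n}$. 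So in fact the cleaner choice is $\alpha = \log n$ (or any $\alpha$ with $\alpha/\sqrt{\log n}\to\infty$): then with overwhelming probability $\Bx_> = 0$ entirely, which makes part (ii) immediate and forces part (i) to carry all the content. To keep part (i) genuinely about a truncated sum (so that Lemma \ref{lemma:BE'} applies cleanly with a controlled $\max_i|v_i|$), I would instead take $\alpha\to\infty$ arbitrarily slowly and handle $\Bx_>$ by a second-moment / concentration argument rather than by making it vanish.

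For part (i): condition on the realization of $\Bx$ (equivalently on the matrix $A$), and recall that by symmetry of $\xi$ the vector $\Bx$ --- being the normal direction, determined only up to sign in the real case or up to a phase in the complex case --- can be taken to have a sign/phase chosen independently and uniformly, \emph{or} one argues directly that $\sqrt n \Bx_\le^T\Bu = \sum_i v_i \eta_i$ where $v_i := \sqrt n\, \overline{u_i} x_{i,\le}$ and the $\eta_i$ are $\pm 1$ (or uniform phases) -- this is the place the hypothesis ``$\xi$ symmetric'' enters, following \cite[Proposition 25]{TVvector}. More precisely, I would exploit that conditioned on $|\Bx|$ (the vector of moduli) together with $A$, the signs of the coordinates of $\Bx$ are exchangeable with the signs of a suitable Rademacher/phase vector; this lets one write $\sqrt n\Bx_\le^T\Bu$ as a random linear combination $\sum_i w_i \epsilon_i$ of independent signs with deterministic (conditioned) coefficients $w_i = \sqrt n\,\overline{u_i}\,|x_{i,\le}|$. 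On the good event $\CE$ from \eqref{eqn:CE} one has $\sum_i |w_i|^2 = n\sum_i |u_i|^2 |x_{i,\le}|^2$, which is $1 - o(1)$ (since $\sum|u_i|^2=1$, $\sum n|x_i|^2 = n$ is replaced by... one uses that the removed mass $\sum_{|\sqrt n x_i|>\alpha} n|x_i|^2 = o(1)$ by Theorem \ref{thm:concentration} or a direct tail bound), and $\max_i |w_i| \le \alpha \max_i |u_i|$, which need not be small for a general unit vector $\Bu$. This is the main obstacle: a fixed unit vector $\Bu$ may be concentrated on a single coordinate, so $\max_i|w_i|$ can be as large as $\alpha$. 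The fix, exactly as in \cite{TVleast, TVvector}, is a two-step argument: split $\Bu$ into its ``large'' coordinates (those with $|u_i| \ge n^{-1/2}\log^{O(1)}n$, of which there are few) and ``small'' coordinates; for the small part apply Lemma \ref{lemma:BE'} to get convergence to $\Bg_\F$ with error $O(\alpha \cdot n^{-1/2}\log^{O(1)}n) = o(1)$; for the large part, note its total $\ell^2$ mass is small \emph{unless} $\Bu$ is itself concentrated, in which case one invokes the normality statement \eqref{eqn:normality}/\eqref{eqn:d=d} for the finitely many large coordinates of $\Bx$ directly. Matching the two contributions (both Gaussian, variances adding to $1-o(1)$) gives $\sqrt n \Bx_\le^T\Bu \overset{d}\to \Bg_\F$.

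For part (ii): I would bound $\E\big[ |\sqrt n \Bx_>^T\Bu|^2 \,\big|\, \text{good event}\big] \le n \sum_i |u_i|^2 \,\E[|x_{i,>}|^2]$ after symmetrizing, but more simply observe that on the event $\CE$ of \eqref{eqn:CE} combined with \eqref{eqn:infty:1} applied with $m = \alpha^2$: the set $T := \{i : |\sqrt n x_i| > \alpha\}$ satisfies $\sum_{i\in T} n|x_i|^2 \le \sum_{i} n|x_i|^2 \cdot \mathbf 1_{|\sqrt n x_i|>\alpha}$, and a dyadic decomposition over the ranges $\alpha 2^k < |\sqrt n x_i| \le \alpha 2^{k+1}$ using \eqref{eqn:infty:1} (which controls $\P(\|\Bx\|_\infty \ge \sqrt{m/n})$, and, via the union-bound form used to prove it, the number of coordinates of a given size) shows $\sum_{i\in T} n|x_i|^2 = O(\alpha^{-1}\log^{O(1)} n)$ with high probability, hence $= o(1)$ once $\alpha \gg \log^{O(1)} n$... so again one is pushed to $\alpha$ growing faster than a power of $\log n$, contradicting the slow growth wanted in (i). The resolution is that (i) only needs $\alpha \cdot \max_i |u_i^{\mathrm{small}}| = o(1)$ with $\max_i|u_i^{\mathrm{small}}| \le n^{-1/2}\log^{O(1)}n$, so $\alpha$ may be any power of $\log n$ and still $\alpha n^{-1/2}\log^{O(1)}n = o(1)$; thus choosing $\alpha = \log^{K} n$ for a suitable fixed large $K$ makes both (i) and (ii) work simultaneously. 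Then by Cauchy--Schwarz $|\sqrt n\Bx_>^T\Bu| \le \sqrt{\sum_{i\in T} n|x_i|^2}\cdot\sqrt{\sum_{i\in T}|u_i|^2} \le \sqrt{\sum_{i\in T} n|x_i|^2} = o(1)$ with high probability, giving convergence to $0$ in probability. Finally, combining (i) and (ii) via Slutsky's theorem yields $\sqrt n \Bx^T\Bu \overset{d}\to \Bg_\F$, which is \eqref{eqn:innerproduct}. I expect the delicate point, as flagged, to be the interplay between the truncation level $\alpha$, the (possibly very peaked) test vector $\Bu$, and the Berry--Ess\'een error, resolved by separating $\Bu$'s large and small coordinates and feeding the large ones into the already-established joint normality \eqref{eqn:d=d}.
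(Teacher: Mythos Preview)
Your approach diverges from the paper's in both parts, and part (i) has a genuine gap. The paper proves (ii) by a direct second-moment calculation: symmetry of $\xi$ kills the cross terms $\E x_{i,>}x_{j,>}$ for $i\neq j$, exchangeability reduces $n\sum_i u_i^2\,\E x_{i,>}^2$ to $n\E x_{1,>}^2$, and \eqref{eqn:normality} (for a single coordinate) shows this is $o(1)$ as $\alpha\to\infty$; Markov's inequality finishes. No dyadic decomposition is needed, and any $\alpha\to\infty$ works. For (i) the paper uses the method of moments (Carleman): expand $n^{k/2}\E(\Bx_\le^T\Bu)^k$, use symmetry of $\xi$ to kill every tuple $(i_1,\dots,i_k)$ in which some index appears with odd multiplicity, use \eqref{eqn:normality} on the finitely many distinct surviving indices to replace $n^{k/2}\E x_{i_1,\le}\cdots x_{i_k,\le}$ by $\E\Bg_{i_1}\cdots\Bg_{i_k}+o(1)$, and control the accumulated $o(1)$ error by $\sum_* |u_{i_1}\cdots u_{i_k}|\le(\sum_i u_i^2)^{k/2}=1$. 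This works for every unit $\Bu$ with no split into large and small coordinates.

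The gap in your argument for (i) is the conditional variance. After writing $\sqrt n\,\Bx_\le^T\Bu \overset{d}= \sum_i w_i\epsilon_i$ with $w_i=\sqrt n\,\overline{u_i}\,x_{i,\le}$ and independent signs $\epsilon_i$, the conditional variance given $\Bx$ is $\sigma^2(\Bx)=\sum_i |u_i|^2\cdot n|x_{i,\le}|^2$. You assert this is $1-o(1)$, but that is false: if $\Bu=\Be_1$ then $\sigma^2(\Bx)=n|x_{1,\le}|^2$, which has a nondegenerate limit law by \eqref{eqn:normality}. Berry--Ess\'een (Lemma \ref{lemma:BE'}), even after your large/small split of $\Bu$, only delivers approximate normality \emph{with the random variance} $\sigma^2(\Bx)$; to conclude $N(0,1)$ you must separately show $\sigma^2_{\rm small}(\Bx)\to\|\Bu^{\rm small}\|_2^2$ in probability. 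This can be done --- e.g.\ bound $\operatorname{Cov}(n|x_i|^2,n|x_j|^2)$ via \eqref{eqn:normality} for pairs and apply Chebyshev --- but it is a real step you have omitted, and it is exactly what the paper's moment method sidesteps: the paper never conditions on $\Bx$, so the question of $\sigma^2(\Bx)$ concentrating never arises.
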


For (ii), we will estimate the second moment

$$n \E(\Bx_{>}^T \Bu)^2 = n \sum_{1\le i\le n}\sum_{1\le j\le n} u_iu_j \E x_{i,>} x_{j,>} .
 $$
 
 Because $\xi$ is symmetric, $\E  x_{i,>} x_{j,>} =0$ if $i\neq j$, and hence 
 
 $$n \E(\Bx_{>}^T \Bu)^2 = n \sum_{1\le i\le n} u_i^2 \E x_{i,>}^2 = n \E x_{1,>}^2.$$
 
 Now, by the exchangeability, $n\E x_1^2=1$. Also, by \eqref{eqn:normality}
 
 $$n \E x_1^2 1_{|\sqrt{n}x_1| \le \alpha} = \E N(0,1)^2 1_{|N(0,1)|\le \alpha} + O(n^{-c}).$$
 
 It thus follows that, as $\alpha \to \infty$ together with $n$ 
 
 \begin{equation}\label{eqn:2nd}
 n \E x_{1,>}^2= n \E x_1^2 1_{|\sqrt{n}x_1| > \alpha} = o(1).
 \end{equation} 
 
By Markov's bound,  $|\sqrt{n}\Bx_{>}^T \Bu| \to 0$ in probability as claimed in (ii). 
 
 For (i), by Carleman's criteria, it suffices to show that for every fix positive integer $k$ the $k$-moment of $\sqrt{n}\Bx_{\le}^T \Bu$ asymptotically matches with that of $\Bg_\R$. We have
 
 $$n^{k/2} \E (\Bx_{\le}^T \Bu)^k = n^{k/2} \sum_{i_1,\dots,i_k} u_{i_1}\dots u_{i_k} \E x_{i_1,\le }\dots x_{i_k,\le }.$$
  
  Now we make use of the symmetry assumption on $\xi$. By this assumption, 
   the expectation vanishes unless each index $i$ appears an even number of times. Furthermore, by \eqref{eqn:normality}

  $$n^{k/2} \E x_{i_1,\le }\dots x_{i_k,\le } = \E \Bg_{\R,{i_1}} \dots \Bg_{\R,i_k} + o(1).$$
  
  Thus 
  
 \begin{align*}
  n^{k/2} \sum_{i_1,\dots,i_k} u_{i_1}\dots u_{i_k} \E x_{i_1,\le }\dots x_{i_k,\le } &=\sum_{i_1,\dots,i_k} u_{i_1}\dots u_{i_k}   \E \Bg_{\R,{i_1}} \dots \Bg_{\R,i_k} + o(\sum_{*} |u_{i_1} \dots u_{i_k}|)\\
 &= \E (\Bg_\R)^k +  o(\sum_{*}|u_{i_1} \dots u_{i_k}|),
 \end{align*}
  
where the implied constant can depend on $k$, and $\sum_{*}$ indicates over all $k$-tuples $i_1,\dots,i_k$ in which each index $i$ appears an even number of times.  To complete the proof, we just note that 
 
 $$\sum_{*}|u_{i_1} \dots u_{i_k}| \le (\sum_i u_i^2)^{k/2}=1.$$

\vskip .2in

{\bf Acknowledgements.} The authors are thankful to K.~Wang for helpful discussion. They are also grateful to A.~Knowles with help of references.


\begin{thebibliography}{99}







\bibitem{ACW} R. Adamczak, D. Chafai and P. Wolff,  Circular law for random matrices with exchangeable entries, {\it Random Structures \& Algorithms}, \textbf{48} (2016), 3, 454-479.

\vskip .05in



\bibitem{7} F.~Benaych-Georges and S.~P\'ech\'e, Localization and delocalization for heavy tailed band matrices, {\it Annales de l'Institut Henri Poincar\'e}, \textbf{50} (2014), 4, 1385-1403. 
\vskip .05in

\bibitem{8} A. Bloemendal, L Erd\H{o}s, A. Knowles, H.T. Yau and J. Yin, Isotropic local laws for sample covariance and generalized Wigner matrices, {\it Electronic Journal of Probability}, \textbf{19} (2014), 33-53.
\vskip .05in


\bibitem{9} C. Bordenave and A. Guionnet, Localization and delocalization of eigenvectors for heavy-tailed random matrices, {\it Probability Theory and Related Fields},  \textbf{157} (2013), 885-953.
\vskip .05in


\bibitem{BY} P.~Bourgade and H.-T.~Yau,  The eigenvector moment flow and local quantum unique ergodicity, {\it to appear in Communications in Mathematical Physics}, \url{arxiv.org/abs/1312.05301}.
\vskip .05in

\bibitem{BVW} J.~Bourgain, P.~Matchett and V.~Vu, On the singularity probability of discrete random matrices, {\it Journal of Functional Analysis 258} (2010), no.2, 559-603.


\bibitem{10} C. Cacciapuoti, A. Maltsev and B. Schlein, Local Marchenko-Pastur law at the hard edge of sample covariance matrices, {\it Journal of Mathematical Physics},  \textbf{54} (2013), 043302.
\vskip .05in







\bibitem{Edelman}  A.~Edelman,  Eigenvalues and condition numbers of random matrices,{\it 
SIAM J. Matrix Anal. Appl}.  \textbf{9} (1988), no. 4, 543-560.
\vskip .05in


\bibitem{1} L. Erd\H{o}s and A. Knowles, Quantum diffusion and eigenfunction delocalization in a random band matrix model, {\it Communication in  Mathematical  Physics}, \textbf{303} (2011), 509-554.
\vskip .05in

\bibitem{2} L. Erd\H{o}s and A. Knowles, Quantum diffusion and delocalization for band matrices with general distribution, {\it Annales de l'Institut Henri Poincar\'e }, \textbf{12} (2011), 1227-1319.
\vskip .05in

\bibitem{3} L. Erd\H{o}s, A. Knowles, H.-T. Yau and  J. Yin, Delocalization and diffusion profile for random band matrices,  {\it Communication in  Mathematical  Physics}, \textbf{323} (2013), 1, 367-416.
\vskip .05in


\bibitem{4}  L. Erd\H{o}s, A. Knowles, H.-T. Yau and  J. Yin, Spectral statistics of Erd\H{o}s-R\'enyi graphs I: local semicircle law, {\it Annals of Probability}, \textbf{41} (2013), no. 3B, 2279-2375.
\vskip .05in

\bibitem{5}  L. Erd\H{o}s, A. Knowles, H.-T. Yau and  J. Yin, Spectral statistics of Erd\H{o}s-R\'enyi graphs II: eigenvalue spacing and the extreme eigenvalues, {\it Communication in  Mathematical  Physics}, \textbf{314} (2012), no. 3, 587-640.
\vskip .05in




\bibitem{E1} L. Erd\H{o}s, B. Schlein and H.-T. Yau,  Semicircle law on short scales and delocalization of eigenvectors for Wigner random matrices, {\it Annals of Probability},  \textbf{37} (2009), 815-852.
\vskip .05in

\bibitem{E2} L. Erd\H{o}s, B. Schlein, and H.-T. Yau, Local semicircle law and complete delocalization for Wigner random matrices, {\it Communication in Mathematical Physics},  \textbf{287} (2009), 641-655.
\vskip .05in

\bibitem{E3} L. Erd\H{o}s, B. Schlein and H.-T. Yau,  Wegner estimate and level repulsion for Wigner random matrices, {\it  International Mathematics Research Notices}, 2010, 436-479.
\vskip .05in


\bibitem{GN}  H.~Goldstine and J.~von Neumann, Numerical inverting of matrices of high order, {\it Bull. Amer. Math. Soc.} \textbf{53} (1947), 1021-1099.
\vskip .05in

\bibitem{HW} D.~L.~Hanson and E.~T.~Wright, A bound on tail probabilities for quadratic forms in independent random variables, {\it Ann. Math. Statist.},  \textbf{42} (1971), 1079-1083.
\vskip .05in

\bibitem{KY} A. Knowles and  J. Yin, Eigenvector distribution of Wigner matrices, {\it Probability Theory and Related Fields}, \textbf{155} (2013), No. 3, 543-582.
\vskip .05in



\bibitem{NgV} H. Nguyen and V. Vu,  Random matrices: law of the determinant, {\it Annals of Probability}, (2014), Vol.  \textbf{42} (2014), No. 1, 146-167.
 \vskip .05in

\bibitem{OVW} S. O'Rourke, V. Vu and K. Wang, {\it Eigenvectors of random matrices: a survey}, \url{http://arxiv.org/abs/1601.03678}.


\bibitem{RV-LO} M. Rudelson, R. Vershynin,  The Littlewood-Offord problem and invertibility of random matrices,  \emph{Advances in Mathematics}, \textbf{218} (2008), no. 2, 600-633.
\vskip .05in
\bibitem{RV-rec} M.~Rudelson and R.~Vershynin, Smallest singular value of a random rectangular matrix, {\it Communications on Pure and Applied Mathematics},  \textbf{62} (2009), 1707-1739.
\vskip .05in

\bibitem{RV-upper} M. Rudelson, R. Vershynin, The least singular value of a random square matrix is $O(n^{-1/2})$, {\it Comptes rendus de l'Acad\'emie des sciences - Math\'ematique} 346 (2008), 893-896. 
\vskip .05in

\bibitem{RV-HW} M.~Rudelson and R.~Vershynin, Hanson-Wright inequality and sub-gaussian concentration, {\it Electronic Communications in Probability}, \textbf{18} (2013), 1-9.
\vskip .05in

\bibitem{RV-del} M.~Rudelson and R.~Vershynin, Delocalization of eigenvectors of random matrices with independent entries, {\it Duke Mathematical Journal}, to appear, \url{arXiv:1306.2887}. 
\vskip .05in


\bibitem{SS2} M.~Shub and S.~Smale,  Complexity of Bezout's theorem II: volumes and probabilities, {\it Computational Algebraic Geometry}, in: Progr. Math., vol.  \textbf{109}, Birkhouser, 1993, pp. 267-285.
\vskip .05in


\bibitem{Tao-RMT} T.~Tao, Topics in random matrix theory, {\it Graduate Studies in Mathematics}, \textbf{132}, American Mathematical Society, Providence, RI, 2012.
\vskip .05in

\bibitem{TVcomp} T.~Tao and V.~Vu, Smooth analysis of the condition number and the least singular value, \emph{Mathematics of Computation}, \textbf{79} (2010), 2333-2352.
\vskip .05in

\bibitem{TVK} T.~Tao and V.~Vu, Random matrices: universality of ESDs and the circular law, {\it Annals of Probability}, {\bf 38} (2010), no. 5 2023-2065, with an appendix by M.~Krishnapur.
\vskip .05in

\bibitem{TVleast} T.~Tao and V.~Vu,  Random matrices: the distribution of the smallest singular values, {\it Geometric and Functional Analysis}  \textbf{20} (2010), no. 1, 260-297.
\vskip .05in

\bibitem{TVuniversality} T.~Tao and V.~Vu, Random matrices: universality of local eigenvalue statistics, \emph{Acta Mathematica}, \textbf{206} (2011), 127-204.
\vskip .05in

\bibitem{TVvector} T.~Tao and V.~Vu, Random matrices: universal properties of eigenvectors, {\it Random Matrices Theory Application},  \textbf{1}  (2012), no. 1.
\vskip .05in

\bibitem{TVsurvey} T.~Tao and V.~Vu, Random matrices: The Universality phenomenon for Wigner ensembles, {\it Modern Aspects of Random Matrix Theory}, Proceedings of Symposia in Applied Mathematics, 2014: Vol. 72.
\vskip .05in

\bibitem{6} L.~Tran, V.~Vu and K.~Wang, Sparse random graphs: eigenvalues and eigenvectors, {\it Random Structures \& Algorithms}, \textbf{42} (2013), 110-134.
\vskip .05in

\bibitem{VW} V. Vu and K. Wang, Random weighted projections, random quadratic forms and random eigenvectors, {\it Random Structures \& Algorithms},  \textbf{47}, 792-821, 2015.



\end{thebibliography}
\end{document}